\documentclass[11pt,a4paper]{amsart}
\usepackage[left=2cm,right=2cm,top=3cm,bottom=3cm]{geometry}
\usepackage[T1]{fontenc}
\usepackage{lmodern}
\usepackage{amsmath,amssymb,amsthm}
\numberwithin{equation}{section}
\usepackage{graphicx}
\usepackage{float}
\usepackage{tikz}
\usepackage{booktabs}
\usepackage{xurl}
\usepackage[hidelinks]{hyperref}
\usepackage{microtype}

\makeatletter

\def\l@section{\@tocline{1}{4pt}{0pt}{0pt}{\bfseries}}
\def\l@subsection{\@tocline{2}{0pt}{1.5em}{0pt}{\normalfont}}
\renewcommand{\tocsection}[3]{%
  \@ifnotempty{#2}{%
    \@ifempty{#1}{\makebox[2em][l]{#2.}}{#1~#2.\quad}}#3}

\def\@tocline#1#2#3#4#5#6#7{\relax
  \ifnum #1>\c@tocdepth
  \else
    \par\addpenalty\@secpenalty\addvspace{#2}%
    \begingroup\hyphenpenalty\@M
    \@ifempty{#4}{%
      \@tempdima\csname r@tocindent\number#1\endcsname\relax
    }{\@tempdima#4\relax}%
    \parindent\z@\leftskip#3\relax
    \advance\leftskip\@tempdima\relax
    \rightskip\@pnumwidth plus4em\parfillskip-\@pnumwidth
    #5\leavevmode\hskip-\@tempdima#6\nobreak
    \leaders\hbox{\normalfont\kern.3em.\kern.3em}\hfill
    \nobreak\hbox to\@pnumwidth{\@tocpagenum{#7}}\par
    \nobreak\endgroup
  \fi}
\makeatother

\newtheorem{theorem}{Theorem}[section]
\newtheorem{lemma}[theorem]{Lemma}
\newtheorem{proposition}[theorem]{Proposition}
\newtheorem{corollary}[theorem]{Corollary}
\theoremstyle{definition}
\newtheorem{definition}[theorem]{Definition}
\newtheorem{example}[theorem]{Example}

\title[Solutions in Cantor sets]{Solutions of $x^\alpha+y^\alpha=1$ in $\lambda$-Cantor sets}
\author{Kan Jiang}
\address{Department of Mathematics, Ningbo University, Ningbo 315211, China}
\email{jiangkan@nbu.edu.cn}
\author{Nero Ziyu Li}
\address{Department of Mathematics, Imperial College London, London SW7 2AZ,
United Kingdom; CDT Random Systems, University of Oxford, Oxford, United Kingdom}
\email{ziyu.li21@imperial.ac.uk}
\author{Lifeng Xi}
\address{Department of Mathematics, Ningbo University, Ningbo 315211, China}
\email{xilifeng@nbu.edu.cn}
\subjclass[2020]{28A80,11K55}
\keywords{Cantor sets, Fermat curves, Hausdorff dimension, Hausdorff phase transitions}
\date{}

\begin{document}

\begin{abstract}
In this paper, we study the structure of the solutions of \(x^\alpha+y^\alpha=1\) with both
coordinates in a \(\lambda\)-Cantor set.
We prove that, for any \(\alpha>1\), the intersection with the middle-third Cantor
set has positive Hausdorff dimension, strengthening the infinitude question
posed by Jiang et al. [J. London Math. Soc. 113 (2026), e70408].
To explore the structure of the solutions, for arbitrary \(\alpha\), we find
that their Hausdorff dimension undergoes phase transitions as \(\lambda\)
varies: we establish the occurrence of Hausdorff phase transitions by proving the
existence of zero-dimensional and positive-dimensional regions.
\end{abstract}
\maketitle
\tableofcontents

\section{Introduction}\label{sec:introduction}

Does the equation
\[
x^2+y^2=1, \quad x,y \in C ,
\]
where $C$ is the middle-third Cantor set,
have finitely or infinitely many solutions?

The question has a direct geometric interpretation.
Writing \(C\) for the middle-third Cantor set and \(S^1\) for the unit
circle, the solution set is the intersection of the Cantor dust
\(C\times C\) with \(S^1\), as illustrated in Figure~\ref{fig:source}.
The two axis points \((0,1)\) and \((1,0)\) already lie in this
intersection; the question is how many further intersection points
there are, and whether their number is infinite.

\begin{figure}[htb]
\centering
\input{figures/cantor-circle.tex}
\caption{The Cantor dust \(C\times C\) and the unit circle \(S^1\)
(left), with a local cylinder shown on the right.}
\label{fig:source}
\end{figure}

A direct antecedent is Yu's study of missing-digit points near
manifolds~\cite{Yu2023}. He asked whether the unit circle contains
infinitely many points whose coordinates lie in the Cantor set of
contraction ratio \(1/5\), and also raised the problem of solving
\(x^3+y^3=1\) in the middle-third Cantor set. These questions concern
prescribed nonlinear curves. The intersection methods developed by
Shmerkin~\cite{Shmerkin2019} and Wu~\cite{Wu2019} provide relevant
dimension estimates, but, as Yu observed, even a zero-dimensional
upper bound does not establish finiteness. For the middle-third Cantor
set, Du, Jiang and Yao~\cite{DuJiangYao2025} subsequently proved, with
computer assistance, that \(x^2+y^2=1\) has at least \(10,000,000\)
distinct solutions with an irrational coordinate. This gave a large
finite lower bound without settling infinitude.

Jiang, Kong, Li and Wang~\cite{Jiang2026} then studied the unit-circle
intersection across the family of \(\lambda\)-Cantor sets. They proved
that only the two axis points occur for \(0<\lambda\leq2-\sqrt3\),
answering Yu's \(1/5\) question negatively, and gave sufficient ranges
for nontrivial intersections and continuum cardinality. They left
infinitude at \(\lambda=1/3\) open and asked for the Hausdorff dimension,
conjecturing \(2\log2/\log3-1\)~\cite[Question~5.1]{Jiang2026}.
More recently, Li, Shen and Wu~\cite{LiShenWu2026} established continuum
cardinality for \((\sqrt3-1)/2<\lambda<1/2\) and positive Hausdorff
dimension for \(\sqrt2-1<\lambda<1/2\). Their paper also includes an
infinitude theorem covering \(\lambda=1/3\), although their
positive-dimension range does not include this parameter. Our
unit-circle result strengthens the known infinitude at \(1/3\) to
positive Hausdorff dimension; the exact dimension remains open.
See also~\cite{HuntKanYorke1993,FengHuangRao2014,HuangKong2023}
for related results on Cantor-set intersections,
\cite{JiangNonlinear,McDonaldTaylor,MoreiraImages} on nonlinear images
of products of Cantor sets, and
\cite{PeresShmerkin2009,NazarovPeresShmerkin} on resonance and the
dimensions of arithmetic sums and convolutions.

Motivated by this question, we seek both to answer it and to understand
the phenomenon in a broader setting.
We replace the exponent \(2\) by a real parameter \(\alpha>1\) and the
middle-third Cantor set by a \(\lambda\)-Cantor set, and study the
solutions of \(x^\alpha+y^\alpha=1\) as these two parameters vary.

%%%%%%%%%%%%%%%%%%%%%%%%%%%%%%%%%%%%%%%%%%%%%%%%%%%%%%%
\subsection{Main Results}\label{subsec:main-results}

The question posed in~\cite{Jiang2026} asks about both the cardinality
and the dimension of the unit-circle intersection in the middle-third
Cantor dust. To understand how the solution set depends on the exponent
and the Cantor set, we ask two questions:
\begin{itemize}
\item (Q1) What happens if we replace the exponent \(2\) by \(\alpha\), even
allowing \(\alpha\) to be real? This will tell us whether the structure of
the solution set depends on the arithmetic structure of the integers.
\item (Q2) If we consider the more general \(\lambda\)-Cantor sets, when
is the solution set zero-dimensional, and when does it have positive
dimension? What do the transitions between these regimes look like
(the Hausdorff phase transitions discussed below)?
\end{itemize}

To answer (Q1), for \(0<\lambda\leq1/2\), we write
\[
C_\lambda:=
\left\{(1-\lambda)\sum_{j=1}^{\infty}
       \varepsilon_j\lambda^{j-1}:\varepsilon_j\in\{0,1\}\right\}.
\]
For \(\alpha>1\), define
\[
E_{\alpha,\lambda}:=
\{(x,y)\in C_\lambda\times C_\lambda:x^\alpha+y^\alpha=1\}.
\]

For \(\lambda=1/3\), we prove in Section~\ref{subsec:classical}
that the solution set has positive Hausdorff dimension for every
real exponent \(\alpha>1\).

\begin{theorem}\label{intro:integer-powers}
For every real \(\alpha>1\), \(\dim_{\mathrm H}(E_{\alpha,1/3})>0\).
\end{theorem}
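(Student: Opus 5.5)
The plan is to build, inside $E_{\alpha,1/3}$, a Cantor-type subset that is the graph of a bi-Lipschitz map over a self-similar set of positive dimension. To do this we localise near a nontrivial solution and use the Newhouse thickness (gap) lemma to keep producing solutions at every scale.

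Write $y=f(x)=(1-x^\alpha)^{1/\alpha}$, which is smooth, strictly decreasing and has derivative bounded away from $0$ and $\infty$ on any compact subinterval of $(0,1)$. The condition $(x,y)\in E_{\alpha,1/3}$ becomes $x\in C\cap f^{-1}(C)$, where $C=C_{1/3}$. Since $f$ is a diffeomorphism there, it suffices to show that $\dim_{\mathrm H}(C\cap f^{-1}(C)\cap I)>0$ for some interval $I\subset(0,1)$.

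First, choose cylinders $I=[a,a+3^{-n}]$ and $J=[b,b+3^{-n}]$ of $C$ at a common level. They should satisfy $f(I)\cap J\neq\emptyset$, and the two sets $f(C\cap I)$ and $C\cap J$ should be interleaved, meaning neither lies in a gap of the other. For example, one can take them near the point $x=y=2^{-1/\alpha}$, where $|f'|=1$. Both $C\cap I$ and $f(C\cap I)$ have thickness close to $1$ once $n$ is large, because $f$ is nearly affine on small intervals. More precisely, the thickness of $f(K)$ is at least $\tau(K)\cdot\min|f'|/\max|f'|$ on $I$.

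The key step is a nested-intersection argument. By the Newhouse gap lemma, two interleaved Cantor sets with $\tau_1\tau_2\ge 1$ intersect. Here $\tau(C)=1$ and $\tau(f(C\cap I))\approx 1$, so we are exactly at the borderline. This is where I expect the main obstacle. I would first try to get around it by a refinement: I would pass to subsets with slightly larger thickness by working with $C$ as the attractor of $\{x/9,\,x/9+2/9,\,x/9+2/3,\,x/9+8/9\}$, but that does not raise the thickness. So instead I would argue directly with a branching scheme.

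At each scale, the curve $f$ restricted to a level-$k$ cylinder $I_k$ is nearly linear with slope $s_k$ close to some $s$. I want to find, inside each pair $(I_k,J_k)$ with $f(I_k)\cap J_k\neq\emptyset$ in a ``robust'' position, at least two disjoint pairs $(I_{k+m},J_{k+m})$ of level-$k+m$ subcylinders that are again in robust position. Here $m$ is a fixed integer. Robust position means the image of the centre of $I$ lies within a fixed fraction of $|J|$ from the centre of $J$.

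Checking this is a finite combinatorial statement about the linear map $x\mapsto sx+t$ acting on the digit expansions. I would use the fact that the set $\{(2\varepsilon_1/3+\dots)\}$ in base $3$ of level-$m$ cylinder left endpoints, compared with their images, has many near-coincidences $|s\,u-v-t'|<3^{-m}\delta$. These come from pigeonhole, since $2^m\cdot2^m$ pairs of digit strings are spread over a range of size $O(1)$ at resolution $3^{-m}$, and $4^m\gg 3^m$. The error from nonlinearity is $O(3^{-2k})$ relative to $3^{-k}$ and is absorbed for large $k$. Because the slope varies, pigeonhole must be applied uniformly over $s$ in a compact interval. I would handle this by a second-moment, energy-type count over a finite $3^{-m}$-net of slopes and offsets, showing that every $(s,t)$ admits at least two robust children.

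Given this, the nested pairs define a Cantor set $K\subset C\cap f^{-1}(C)$. It is built by choosing at least $2$ children, with contraction $3^{-m}$, at each stage. By the mass distribution principle, $\dim_{\mathrm H}K\ge \log 2/(m\log 3)>0$. The graph $\{(x,f(x)):x\in K\}\subset E_{\alpha,1/3}$ is bi-Lipschitz to $K$, which gives $\dim_{\mathrm H}(E_{\alpha,1/3})>0$ for every real $\alpha>1$.

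The uniform robust-children count is the heart of the proof. It is precisely where the borderline thickness $1\cdot1=1$ must be beaten using the arithmetic surplus $4^m>3^m$, and I expect it to need a careful, possibly computer-assisted, finite verification.
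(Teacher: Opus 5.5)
\textbf{The gap is in the step you call the heart of the proof.} You claim that every $(s,t)$ in a compact range admits at least two robust children. This is false as stated, and neither a pigeonhole nor a second-moment count can give it.

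\emph{A counterexample at your own base point.} At $x=y=2^{-1/\alpha}$ the linearized equation in an equal-size cylinder pair is $u+v=t$ with $u,v\in C$. Suppose $u/2$ and $v/2$ have base-$3$ digits $\epsilon_j,\delta_j\in\{0,1\}$. Then $t/2$ has digits $c_j=\epsilon_j+\delta_j\in\{0,1,2\}$, with no carries. So branching happens only at digits $c_j=1$, and renormalization acts as the shift on $(c_j)$.
\begin{itemize}
\item Take $t=1-3^{-N}$, i.e.\ $t/2=0.1^N0^\infty$. For large $N$ it lies in your robust window $\lvert t-1\rvert<\delta$, yet its fibre is finite.
\item More strongly, at slope one no compact level set $B$ has the strict-return property of Lemma~\ref{lem:strict-returns}. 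Levels whose expansion of $t/2$ terminates are dense. Every return from such a level into $\operatorname{int}B$ (necessarily inside $(0,2)$) lands on a level whose expansion terminates strictly earlier, so repeated returns must stop.
\end{itemize}

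\emph{Why counting does not help.} The surplus $4^m>3^m$ only says that the \emph{average} number of pairs per $3^{-m}$-cell is large; here, cells with no digit $1$ contain a single pair. Such a count controls typical levels. A nested construction along a rigid curve must instead branch at every level it actually visits.

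\emph{What is actually needed.} You need the uniform strict-return property of Lemma~\ref{lem:strict-returns}. The paper obtains it only at specially chosen slopes:
\begin{itemize}
\item irrational slopes in the admissible bands, where Lemmas~\ref{lem:periodicity} and~\ref{lem:good-slopes} force isolated fibre points to be rational, so every fibre over $B_r$ is uncountable (Lemma~\ref{lem:irrational-tangent});
\item or exact return certificates at specified reference slopes.
\end{itemize}
Your choice $\lvert f'\rvert=1$ is the fully resonant slope where this fails.

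\textbf{A second missing ingredient: starting inside the return set for every $\alpha$.} Even granting such a set, your outline has no way to begin inside it for every exponent. On an interior arc the curve is rigid, so the initial normalized level is forced. That can be handled by an explicit cylinder for one exponent, which is essentially what Section~\ref{sec:special} does for $\alpha=2$. A per-exponent finite search cannot, however, cover the continuum $\alpha>1$, which is noncompact at both ends. As $\alpha\to1$ the arc approaches the resonant line $x+y=1$, which is exactly the slope-one case above.

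\emph{How the paper supplies the freedom.} It works near the axis points. With $x=3^{-k}u$ and $y=1-3^{-m}v$, the rescaled residual converges in $C^1$ to $u^\alpha-Av$, with the coefficient controlled by the phase $m-\alpha k$.
\begin{itemize}
\item For irrational $\alpha$, one picks endpoints making $r=\alpha 3^h v_0/u_0$ an irrational admissible slope. Irrational rotation then tunes the phase so that the level lands where strict returns hold.
\item For $\alpha=a/b$, the phase is confined to $b^{-1}\mathbb Z$. This is why the rational case needs: strict logarithmic sources whose permitted $d$-interval meets $b^{-1}\mathbb Z$; Fermat's Last Theorem to exclude isolated (rational) fibre points; the exact certificate covers; and Theorem~\ref{thm:circle-quantitative} for $\alpha=2$.
\end{itemize}
Your sketch does not replace any of these ingredients.
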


Taking \(\alpha=2\) strengthens the known infinitude of the unit-circle
intersection to positive Hausdorff dimension. Section~\ref{sec:special}
gives a direct proof of this case with a quantitative lower bound.

Section~\ref{subsec:irrational} proves that, for every real exponent
\(\alpha>1\), the dimension is zero when \(0<\lambda\leq1/4\).
For every irrational exponent, each nonempty open interval in
\((1/4,1/2)\) contains a set of positive Lebesgue measure of
parameters for which the dimension is positive. Thus, for irrational
exponents, positive dimension occurs arbitrarily close to \(1/4\)
from above.
For \(\alpha>1\), we describe the first and last transition parameters by
\[
\begin{aligned}
\lambda_{\min}(\alpha)&:=
\inf\{\lambda\in(0,1/2]:\dim_{\mathrm H}(E_{\alpha,\lambda})>0\},\\
\lambda_{\max}(\alpha)&:=
\sup\{\lambda\in(0,1/2]:\dim_{\mathrm H}(E_{\alpha,\lambda})=0\}.
\end{aligned}
\]

We refer to this change from zero to positive Hausdorff dimension as
a \emph{Hausdorff phase transition}.
We call \(\lambda_{\min}\) and \(\lambda_{\max}\) the first and last
Hausdorff phase transition parameters, respectively.
In familiar models of statistical physics, we usually have a single
phase transition point: one phase throughout the parameter range
below it and a different phase throughout the range above it.
In more complicated models, however, there may be several phase
transition points, or even infinitely many. It is therefore natural
to ask whether, for \(\alpha=2\), there are multiple phase transition
points as \(\lambda\) varies.

The first transition satisfies the following bound.
\begin{theorem}\label{intro:irrational-onset}
For every real \(\alpha>1\), \(\lambda_{\min}(\alpha)\geq1/4\),
with equality whenever \(\alpha\) is irrational.
\end{theorem}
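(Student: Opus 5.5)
The statement has two parts. The first is the lower bound \(\lambda_{\min}(\alpha)\ge 1/4\), i.e.\ \(\dim_{\mathrm H}(E_{\alpha,\lambda})=0\) for all \(0<\lambda\le 1/4\). The second is equality for irrational \(\alpha\), i.e.\ for every \(\varepsilon>0\) some \(\lambda\in(1/4,1/4+\varepsilon)\) has \(\dim_{\mathrm H}(E_{\alpha,\lambda})>0\). I would prove them separately, following the two claims announced for Section~\ref{subsec:irrational}.

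\emph{Zero dimension for \(\lambda\le1/4\).} I would cover \(E_{\alpha,\lambda}\) at level \(n\) by products \(I\times J\) of level-\(n\) basic intervals of \(C_\lambda\), each of length \((1-\lambda)\lambda^n\cdot\) const. On \([0,1]^2\) away from the two axis points, the curve \(y=(1-x^\alpha)^{1/\alpha}\) is a graph with bounded nonzero slope. A product of basic intervals is a square of side \(\sim\lambda^n\), and the gaps at level \(k\) have length \(\sim(1-2\lambda)\lambda^{k-1}\ge 2\lambda^k\) when \(\lambda\le1/4\). So a monotone \(C^1\) curve of slope comparable to \(1\) passing through a level-\(k\) square meets only \(O(1)\) of its four level-\((k+1)\) subsquares. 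The reason is that between two children it would have to cross a gap of relative width at least \(1/2\) in one coordinate while moving little in the other.

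More precisely, I would show that the number \(N_n\) of level-\(n\) squares meeting the curve satisfies \(N_{n+1}\le N_n+O(1)\) once the slope is nearly constant on the square, because a curve that is almost straight at scale \(\lambda^n\) can pass through at most a bounded number of children. This gives \(N_n=O(n)\), and hence \(\dim_{\mathrm H}\le \lim \log N_n/(n\log(1/\lambda))=0\). Neighbourhoods of \((0,1)\) and \((1,0)\), where the slope degenerates, are handled by a countable decomposition into dyadic-in-\(\lambda\) strips, on each of which the slope is bounded. The main technical point is to make ``at most one branching per level, up to bounded error'' precise with the curvature term; I would use the fact that curvature contributes \(O(\lambda^{2n})\) deviation, which is negligible against gaps of size \(\lambda^{n}\).

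\emph{Positive dimension near \(1/4\) for irrational \(\alpha\).} I would reduce to a projection/transversality statement. Substituting \(u=x^\alpha\), we have \(E_{\alpha,\lambda}\) nonempty of positive dimension iff \(1\in F(C_\lambda)+F(C_\lambda)\) robustly, where \(F(t)=t^\alpha\), with positive-dimensional fibre. I would localize near points \(x_0,y_0\) with \(x_0^\alpha+y_0^\alpha=1\). There the map \((x,y)\mapsto x^\alpha+y^\alpha\) is, after rescaling a level-\(n\) cylinder, close to the linear map \((x,y)\mapsto ax+by\), where \(b/a=(y_0/x_0)^{\alpha-1}\). The fibre of a linear projection of \(C_\lambda\times C_\lambda\) has positive dimension for Lebesgue-a.e.\ \(\lambda\) in the range where \(\dim (C_\lambda\times C_\lambda)=2\log2/\log(1/\lambda)>1\), i.e.\ \(\lambda>1/4\). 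This follows from Marstrand/Peres--Solomyak-type transversality for the one-parameter family \(C_\lambda+tC_\lambda\), which gives positive-measure sums and positive-dimensional fibres (Mattila's slicing theorem).

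The obstacle is that we need this for a fixed direction, and this is where irrationality of \(\alpha\) enters. For irrational \(\alpha\), the slope ratios \((y_0/x_0)^{\alpha-1}\) attainable along the curve avoid the resonant directions \(\lambda^{k}\) (rational in \(\log\)-scale). This lets me choose a cylinder where the direction is non-resonant, and apply a Hochman/Shmerkin-type result that \(\dim(C_\lambda+tC_\lambda)=\min(1,\dim)\) with positive-measure or interior behaviour for a positive-measure set of \(\lambda\) in any interval. Stability under the \(C^1\) nonlinear perturbation would come from a Newhouse thickness or compact-family argument. I expect making this robust transfer from the linearized problem to the curve, uniformly over a positive-measure set of \(\lambda\) in each interval above \(1/4\), to be the hardest step. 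Combining the two parts gives \(\lambda_{\min}=1/4\).
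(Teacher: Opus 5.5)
\textbf{The bound $\lambda_{\min}(\alpha)\ge 1/4$.} Your counting step fails. A bounded number of children per parent only gives $N_{n+1}\le 4N_n$. The additive bound $N_{n+1}\le N_n+O(1)$ needs all but boundedly many parents to have a single child, and nearly straight curves do not satisfy this. Since $1-C_\lambda=C_\lambda$, the line $x+y=1$ contains $\{(x,1-x):x\in C_\lambda\}$. It meets the two children with digit pairs $(0,1)$ and $(1,0)$ of every generation-$n$ square $I\times(1-I)$, so $N_n\ge 2^n$ for every $\lambda\le 1/4$. The gap inequality $(1-2\lambda)\lambda^{k-1}\ge2\lambda^k$ does not prevent this, because a slope $-1$ curve crosses both gaps diagonally. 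The Fermat arc has slope $-1$ at $x=y=2^{-1/\alpha}$, and by your own curvature estimate it shadows its tangent line over many generations. So no argument that ignores the arithmetic of the tangent slope can work.

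The paper's route (Theorem~\ref{thm:dimension-upper}) runs as follows. Slopes with exponential separation fill everything outside a zero-dimensional exceptional set. For them, Shmerkin's theorem gives the projected dust measure a Frostman exponent $t$ arbitrarily close to $\min\{\log4/\log(1/\lambda),1\}$. This yields the strip count $B_{a,t}4^j\lambda^{jt}$, and hence a local bound $\log4/\log(1/\lambda)-t+\varepsilon$ near curve points with such slopes. Because $f''<0$, the slope map is locally bi-Lipschitz, so points with exceptional slopes form a zero-dimensional set. Countable stability then gives $\dim_{\mathrm H}E_{\alpha,\lambda}\le\max\{\log4/\log(1/\lambda)-1,0\}$, which vanishes for $\lambda\le 1/4$. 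This controls only Hausdorff dimension; it yields no $O(n)$ count, since $B_{a,t}$ is not uniform in the slope.

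\textbf{Equality for irrational $\alpha$.} The role you give irrationality is wrong: the slope of the arc runs through all of $(-\infty,0)$, so nonresonant directions occur for every $\alpha$. Nothing in your sketch distinguishes $\alpha=2$, yet $E_{2,\lambda}=\{(0,1),(1,0)\}$ for all $\lambda\le 2-\sqrt3$ (Theorem~\ref{thm:circle-onset}). Your sketch would therefore prove a false statement.

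The missing point is this. Slicing theorems give positive-dimensional fibres only for almost every level of $C_\lambda+rC_\lambda$, whereas the fixed curve selects a single level for each $\lambda$, and may miss the dust entirely. Positive measure or full dimension of the projection says nothing about that level. Thickness is also unavailable, since $\tau(C_\lambda)=\lambda/(1-2\lambda)<1$ for $\lambda<1/3$.

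The paper uses irrationality through rotation: $m_i-\alpha k_i\to d$ for any prescribed $d$. The near-axis cylinders are $x=\lambda^{k_i}(1-\lambda+\lambda^L\pi_\lambda(\omega))$ and $y=1-\lambda^{m_i}+\lambda^{m_i+L}\pi_\lambda(\tau)$. Their normalized residual converges in $C^1$ to a function with a transverse zero at any chosen $\lambda_0\in(1/4,1/2)$. Every address pair then has a unique root $\lambda(\omega,\tau)$ near $\lambda_0$, which is the transversality in the parameter that your sketch lacks.

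Lemma~\ref{lem:root-fibres} then shows that the root law is absolutely continuous and that almost every root has $\dim_{\mathrm H}E_{\alpha,\lambda}=\log4/\log(1/\lambda)-1$. It uses the Sobolev regularity of Lemma~\ref{lem:linear-regularity}, an $L^1$ coupling of frozen linear kernels across generations, and disintegration. Absolute continuity converts this into positive Lebesgue measure in every interval (Theorem~\ref{thm:irrational-parameters}). For $\alpha=a/b$ the phase $m-\alpha k$ is confined to $b^{-1}\mathbb Z$, which is why only a discrete set of target roots is available in the rational case (Theorem~\ref{thm:denominator}).
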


This theorem gives an initial picture of the solution sets across
the parameter family. It suggests considering a Hausdorff phase
transition diagram, with \(\alpha\) on the horizontal axis and \(\lambda\)
on the vertical axis. The diagram records how the two parameters
together determine the changes between zero and positive dimension.

Section~\ref{sec:last} develops this diagram and addresses (Q2).
We obtain an upper bound for the last Hausdorff phase transition
that holds for every real exponent \(\alpha>1\).
Above this common bound, the intersection has positive Hausdorff
dimension throughout the remaining contraction interval.

\begin{theorem}\label{intro:last-real}
For every real \(\alpha>1\),
\[
\lambda_{\max}(\alpha)<\frac{\sqrt3-1}{2}.
\]
\end{theorem}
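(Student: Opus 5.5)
We must show there is \(\lambda_0<(\sqrt3-1)/2\) such that \(\dim_{\mathrm H}(E_{\alpha,\lambda})>0\) for every \(\lambda\in(\lambda_0,1/2]\), uniformly in \(\alpha>1\). The plan is to reduce the equation to a thickness (Newhouse gap lemma) argument. We then iterate it to build a Cantor subset of \(E_{\alpha,\lambda}\) of positive dimension.

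\textbf{Step 1: rewrite as an intersection.} Write the curve as \(y=f_\alpha(x)=(1-x^\alpha)^{1/\alpha}\), which is smooth and strictly decreasing on \([0,1)\). Solutions then correspond to points of \(C_\lambda\cap f_\alpha^{-1}(C_\lambda)\), and this intersection maps bi-Lipschitzly onto \(E_{\alpha,\lambda}\) away from the endpoints. It therefore suffices to find one pair of cylinders \(I,J\) of \(C_\lambda\) (first coordinate in \(I\), second in \(J\)) with three properties:
\begin{itemize}
\item[(a)] \(f_\alpha\) maps \(I\) into an interval overlapping \(J\) in a linked way (neither interval lies in a gap of the other);
\item[(b)] \(|f_\alpha'|\) is bounded above and below on \(I\);
\item[(c)] the distortion of \(f_\alpha\) on \(I\) is small.
\end{itemize}

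\textbf{Step 2: thickness.} The Newhouse thickness of \(C_\lambda\) is \(\tau=\lambda/(1-2\lambda)\). A \(C^1\) map with small distortion on a small cylinder changes thickness only by a factor close to \(1\). The threshold \(\lambda=(\sqrt3-1)/2\) is exactly where \(\tau^2\) crosses a specific constant: one checks \(\lambda^2/(1-2\lambda)^2=1\) at \(\lambda=1/3\), and \((\sqrt3-1)/2\) solves \(2\lambda^2+2\lambda-1=0\). This matches the Li–Shen–Wu continuum range, which suggests their mechanism is a thickness-type condition with one extra scale of room. So for \(\lambda\) slightly below \((\sqrt3-1)/2\) we still have \(\tau>1\). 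The gap lemma then gives a nonempty intersection in every linked pair of subcylinders.

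\textbf{Step 3: positive dimension.} To get positive dimension rather than a single point, we run the gap lemma recursively and produce a tree. At each level, within a linked pair \((I',f_\alpha^{-1}J')\), thickness greater than \(1\) plus a uniform margin gives at least two disjoint linked sub-pairs at a bounded number \(k\) of levels deeper. This is the standard argument (as in Hunt–Kan–Yorke or Li–Shen–Wu): when \(\tau_1\tau_2\) exceeds \(1\) by a margin, the gap lemma can be applied separately to the left and right halves of a linked pair. The result is a Cantor set whose branching ratio is at least \(2\) per \(k\) levels with contraction \(\lambda^k\), hence dimension at least \(\log2/(k\log(1/\lambda))>0\). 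Uniformity in \(\alpha\) comes from choosing the cylinders \(I,J\) near the point where the slope is \(-1\), namely \(x=y=2^{-1/\alpha}\). There \(|f_\alpha'|\) is close to \(1\), so the thickness product is essentially \(\tau^2\) regardless of \(\alpha\).

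\textbf{Main obstacle.} The hard part is uniformity in \(\alpha\) while keeping a strict inequality \(\lambda_{\max}<(\sqrt3-1)/2\), which needs an explicit margin. As \(\alpha\to1^+\) or \(\alpha\to\infty\), the diagonal point \(2^{-1/\alpha}\) moves toward \(1/2\) or \(1\), and the curvature of \(f_\alpha\) near it changes. The distortion on a cylinder of fixed depth is not uniformly small in \(\alpha\); for instance, as \(\alpha\to\infty\) the curvature blows up near the corner. I would handle this by choosing the cylinder depth \(n(\alpha)\) adaptively, so that distortion is at most \(\varepsilon\), and by showing that a linked starting pair exists at every depth. Linking should follow from \(C_\lambda-C_\lambda\) (slope-one version) containing an interval once \(\tau\ge1\), which gives a starting pair near any point. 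The margin \(\lambda_0\) then depends only on \(\varepsilon\), not on \(\alpha\). If uniformity fails at the extremes, the fallback is to treat large \(\alpha\) by a separate comparison with the limiting curve \(\max(x,y)=1\).
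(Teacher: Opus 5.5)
There is a genuine gap in Step 3, and it cannot be repaired inside a thickness framework. The gap lemma gives only nonemptiness. Your claim that $\tau_1\tau_2>1$ with a margin yields two disjoint linked sub-pairs a bounded number of levels down is false, and it fails in exactly the local model you choose.

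Near the diagonal point the linearised equation is $s+t=u$ with $s,t\in C_\lambda$, i.e.\ $C_\lambda\cap(u-C_\lambda)$, and $u-C_\lambda$ is a translate of $C_\lambda$. Take $u=2/(1+\lambda)$ and expand $u=(1-\lambda)\sum_{i\geq0}D_i\lambda^i$ with $D_i\in\{0,1,2\}$.
\begin{itemize}
\item The admissibility regions for the digits $0$ and $2$ are $[0,2\lambda]$ and $[2-2\lambda,2]$, which are disjoint.
\item The digit $1$, the only one with two realisations $(1,0)$ and $(0,1)$, has region $[1-\lambda,1+\lambda]$.
\item The tails alternate between $2/(1+\lambda)$ and $2\lambda/(1+\lambda)$. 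The first exceeds $1+\lambda$ and the second is below $1-\lambda$, each exactly when $\lambda^2+2\lambda-1<0$.
\end{itemize}
So for every $\lambda<\sqrt2-1$, in particular on a neighbourhood of $(\sqrt3-1)/2$, only one digit is ever admissible and it is never $1$. The fibre is therefore the single point $s=t=1/(1+\lambda)$. Yet the two sets are linked (their hulls are $[0,1]$ and $[u-1,u]$ with $0<u-1<1<u$), and $\tau^2>1$, about $1.87$ at $\lambda=(\sqrt3-1)/2$.

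Consequently no criterion using only thickness and linking can give branching for $\lambda<\sqrt2-1$. That is where $\tau=1+\sqrt2$, the start of the Li--Shen--Wu positive-dimension range, and $\sqrt2-1>(\sqrt3-1)/2$. Nothing in your construction keeps the curve's normalised levels away from such configurations. There was also a warning sign: as written, your argument uses only $\tau>1$ and would prove positivity for all $\lambda>1/3$, and Step~2 never gives $(\sqrt3-1)/2$ any actual role.

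The obstacle you single out is also not the real one. The theorem is for each fixed $\alpha$, and the paper explicitly allows the margin below $(\sqrt3-1)/2$ to depend on $\alpha$. What is needed is a branching mechanism valid for every $\lambda\in[(\sqrt3-1)/2,1/2]$, and Lemma~\ref{lem:two-bands} supplies it:
\begin{itemize}
\item Suppose all derivative ratios of a separable $f(s)+g(t)$ exceed $1$ and lie in a compact part of $\bigl(\frac1\lambda-1-\lambda,\frac1{1-\lambda}\bigr)$ or $\bigl(\frac1\lambda-1,\frac1{1-\lambda-\lambda^2}\bigr)$.
\item Then the digit dynamics force every overlap-avoiding address to be $(00)^\infty$ or $(11)^\infty$, so every strict interior level branches.
\item Compactness of the class of normalised Lipschitz pairs then gives returns of bounded length.
\end{itemize}
Ratio $1$, your diagonal choice, lies in neither band, which is consistent with the example above.

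The rest of the paper's proof realises such ratio intervals on genuine cylinders for every $\alpha>1$ and every $\lambda$ in the interval. It does this with actual rectangles and with endpoint models near the axis points, whose $C^1$ limits are $u^\alpha-\alpha\lambda^{j-k\alpha}v$, all checked by exact interval certificates. It then obtains strictness from the persistence of finitely many strict inequalities at $\lambda=(\sqrt3-1)/2$.
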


Beyond positivity, we would like to determine the dimension itself.
The expected value in the positive-dimensional regime is
$
\frac{2\log2}{\log(1/\lambda)}-1,
$
the dimension of the Cantor dust minus one. At \(\lambda=1/3\),
this reduces to \(2\log2/\log3-1\), the value conjectured for the
unit-circle intersection by Jiang, Kong, Li and Wang
in~\cite[Question~5.1]{Jiang2026}.
Determining the dimension at every point of the diagram remains
difficult. Nevertheless, we obtain this formula for almost every
contraction ratio in an explicit interval, for each fixed irrational
exponent.

\begin{theorem}\label{intro:ae-dimension}
For every fixed irrational \(\alpha>1\), for Lebesgue almost every
\(\lambda\in(\sqrt3/4,1/2)\),
\[
\dim_{\mathrm H}(E_{\alpha,\lambda})
=\frac{2\log2}{\log(1/\lambda)}-1.
\]
\end{theorem}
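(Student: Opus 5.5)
The upper bound \(\dim_{\mathrm H}(E_{\alpha,\lambda})\le \frac{2\log2}{\log(1/\lambda)}-1\) holds for every \(\lambda\) by a covering argument. At level \(n\), the Cantor dust \(C_\lambda\times C_\lambda\) is covered by \(4^n\) squares of side \(\asymp\lambda^n\). The curve \(\Gamma_\alpha=\{x^\alpha+y^\alpha=1\}\cap[0,1]^2\) is a rectifiable \(C^1\) curve of bounded length. It is smooth with nonvanishing curvature away from the endpoints, where it meets the axes transversally enough to cause no trouble. The real difficulty is to bound the number of level-\(n\) squares that meet \(\Gamma_\alpha\) by \(\lesssim 4^n\lambda^n\cdot n^{O(1)}\), up to subexponential factors.

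A naive count, length divided by side, only gives \(\lambda^{-n}\) squares of the full grid, which is too many in the wrong direction. Instead I would bound the count via \(\mathcal H\)-measure: the number of dust squares hit is at most the number of grid squares hit, which is about \(\lambda^{-n}\). That yields only dimension \(\le 1\). So the sharp upper bound needs more. I would use the projection/slicing inequality (Marstrand-type), which gives the bound for almost every \(\lambda\) via transversality in \(\lambda\). Equivalently, I would use Wu's/Shmerkin's theorem that every line slice of \(C_\lambda\times C_\lambda\), for irrational ratios, has dimension at most \(\dim-1\). Since \(\Gamma_\alpha\) is curved, I would reduce locally: on a cylinder of size \(\lambda^k\), rescaling \(\Gamma_\alpha\) by \(\lambda^{-k}\) makes it nearly a line segment. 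Wu--Shmerkin's uniform upper bound for slices by lines with slopes in a compact set, applied to \(\times\lambda^{-1}\)-invariant products, then gives the upper bound. It holds in fact for all \(\lambda\) with \(\log\) of the slope ratio irrational, and the paper's irrationality of \(\alpha\) can be invoked here.

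For the lower bound I would construct a Frostman measure on \(E_{\alpha,\lambda}\) by slicing the natural self-similar measure \(\mu_\lambda\times\mu_\lambda\). Parametrize \(\Gamma_\alpha\) through the map \(F(x,y)=x^\alpha+y^\alpha\), and consider the pushforward \(\nu_\lambda=F_*(\mu_\lambda\times\mu_\lambda)\). If \(\nu_\lambda\) is absolutely continuous with bounded, or at least \(L^2\), density near \(1\), then the disintegration of \(\mu_\lambda\times\mu_\lambda\) along the level sets of \(F\) gives conditional measures on \(F^{-1}(t)\) for a.e.\ \(t\) near \(1\). These measures have dimension \(\dim(\mu_\lambda\times\mu_\lambda)-1\), by the standard Marstrand slicing theorem for sets with absolutely continuous projections. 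I must transfer this from a.e.\ \(t\) to the specific value \(t=1\). For that I would use the self-similarity: the curves \(F^{-1}(t)\) for \(t\) near 1, restricted to dust cylinders, are images of \(F^{-1}(1)\) under the similarities of the IFS, up to rescaling. More precisely, I would work with the family of curves \(\{F=1\}\) intersected with cylinders and push the cylinder back. This yields the level curves \(\{(a+\lambda^k u)^\alpha+(b+\lambda^k v)^\alpha=1\}\), which give a rich, transversal family.

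The key step, and the expected main obstacle, is proving \(L^2\)-absolute continuity of these nonlinear projections for a.e.\ \(\lambda\in(\sqrt3/4,1/2)\). I would do this by a Peres--Solomyak transversality argument. The threshold \(\sqrt3/4\) should be exactly where \(\lambda^2\cdot 4>\ldots\), that is where the correlation dimension \(2\log2/\log(1/\lambda)\) of the dust exceeds the needed \(1\) plus a transversality margin. I would verify transversality of \(\lambda\mapsto F(\pi_\lambda(\omega))-F(\pi_\lambda(\omega'))\) for codings \(\omega,\omega'\) with different first digits, using a \(\delta\)-transversality lemma for power series with coefficients in \(\{-1,0,1\}\) on \((\sqrt3/4,1/2)\). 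Combining the a.e.\ \(L^2\) density with a continuity of the slice measures in \(t\) (via the H\"older density) then gives a Frostman measure on \(E_{\alpha,\lambda}\) of the claimed exponent. Together with the upper bound, this yields the asserted equality.
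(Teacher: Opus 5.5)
\textbf{There is a genuine gap in the lower bound.}

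The problem is the passage from almost every level to the single level $1$. At fixed $\lambda$, disintegrating $\mu_\lambda\times\mu_\lambda$ along $F=x^\alpha+y^\alpha$ gives nothing at the null level $t=1$, and your self-similarity transfer does not exist. Among the maps generating $C_\lambda^2$, only the homothety about the origin sends level sets of $F$ to level sets of $F$, and it relates $t=1$ only to the countable set $\{\lambda^{\alpha n}\}$. By contrast, $(x,y)\mapsto(\lambda x+1-\lambda,\lambda y)$ sends $\{F=1\}$ to $\{(x-1+\lambda)^\alpha+y^\alpha=\lambda^\alpha\}$, which is not a level set of $F$. Pushing cylinders back likewise gives only countably many curves at a fixed $\lambda$, which an almost-everywhere statement cannot see.

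Continuity does not rescue this. The Sobolev exponents that transversality can deliver here are below $1/2$, since the correlation dimension of the dust is below $2$. Such $L^2$ or $H^\sigma$ densities need not be continuous, let alone H\"older. Even a continuous density at $t=1$ would not give a Frostman bound: that needs pointwise control, at that one level, for the rescaled projections of all cylinders along the curve. Note also that your lower bound never uses the irrationality of $\alpha$, and your heuristic for the threshold would give $1/4$, not $\sqrt3/4$.

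\textbf{The missing idea is to make $\lambda$ itself the slicing variable.} The paper uses near-axis cylinders $x=\lambda^k(1-\lambda^L+\lambda^L\pi_\lambda(\omega))$ and $y=1-\lambda^m+\lambda^{m+L}\pi_\lambda(\tau)$. Irrationality of $\alpha$ supplies $k,m$ with $m-\alpha k\approx-\log\alpha/\log\lambda_0$. The normalized residual is then $C^1$-close to $1-\alpha\lambda^{-\log\alpha/\log\lambda_0}$, which vanishes transversally at an arbitrary prescribed $\lambda_0$. So every address pair has a unique root $\lambda$ in a window of width $\asymp\rho=\lambda_0^L$.
\begin{itemize}
\item Lemma~\ref{lem:root-fibres} gives the dimension formula for root-law-almost every $\lambda$, via conditional measures on the root fibres.
\item A kernel-replacement argument, controlled by the parameter-integrated Sobolev norm of $\mu_\lambda*\mu_\lambda$ (Lemma~\ref{lem:average-sobolev}, extended to all slopes by Cauchy--Schwarz), makes the root law $o(1)$-close in total variation to a density $h_0$ with $\|h_0\|_2^2\lesssim\rho^{-1}$ on that window.
\item The Lebesgue density theorem then upgrades this to Lebesgue-almost every $\lambda$.
\end{itemize}
The value $\sqrt3/4$ is exactly where $1-\frac{\log2}{\log(1/\lambda)}<2\sigma<\frac{\log(8/3)}{\log(1/\lambda)}-1$ becomes feasible.

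\textbf{Your upper bound also needs repair.} The Wu--Shmerkin slicing theorems require multiplicatively independent scales in the two coordinates, so they do not apply to $C_\lambda\times C_\lambda$. Nor is there a uniform slice bound over a compact range of slopes. The line $x+y=1$ meets $C_\lambda^2$ in a copy of $C_\lambda$, of dimension $\frac{\log2}{\log(1/\lambda)}>\frac{2\log2}{\log(1/\lambda)}-1$, and the Fermat curve has slope $-1$ at its diagonal point. Theorem~\ref{thm:dimension-upper} instead combines three ingredients:
\begin{itemize}
\item Shmerkin's Frostman bound for projections with exponential separation;
\item a proof that the exceptional slopes form a zero-dimensional set;
\item the fact $f''<0$, which makes the set of curve points with exceptional tangent zero-dimensional.
\end{itemize}
That bound holds for every real $\alpha$ and every $\lambda$; irrationality of $\alpha$ plays no role there.
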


This establishes the expected formula in the stated parameter range;
the precise dimension of the original unit-circle intersection in
the middle-third Cantor dust remains open.

These results provide a framework for studying intersections of the
curves \(x^\alpha+y^\alpha=1\) with \(\lambda\)-Cantor dust and their Hausdorff
phase transitions. Figure~\ref{fig:phase-diagram} summarises the
proved results and numerical estimates in a phase diagram.

\begin{figure}[H]
\centering
\includegraphics[width=\textwidth]{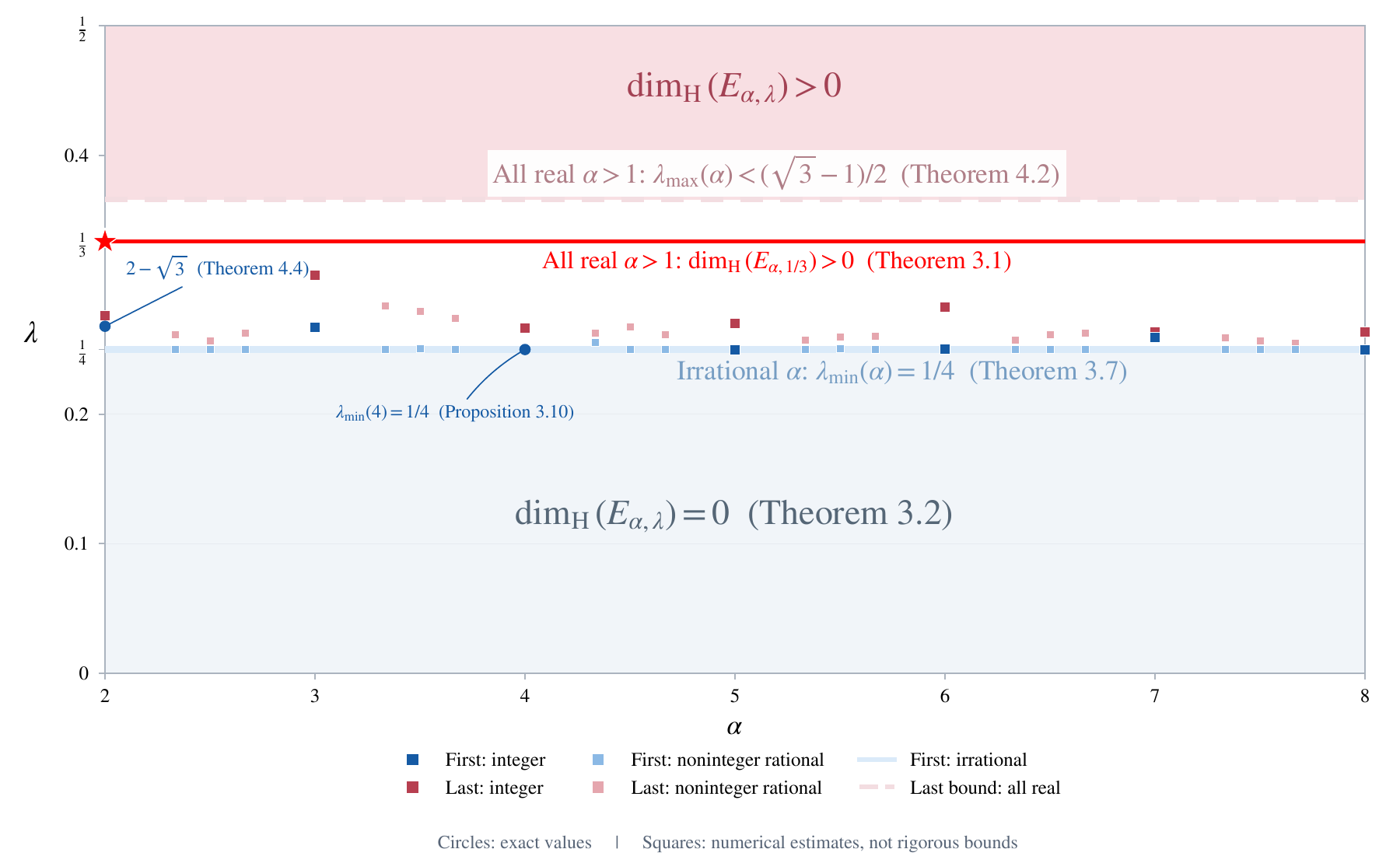}
\caption{Hausdorff phase transition diagram}
\label{fig:phase-diagram}
\end{figure}

\section{\texorpdfstring{\(x^2+y^2=1\)}{x2 + y2 = 1} in the middle-third Cantor set}\label{sec:special}

Our goal in this section is to prove that \(\dim_{\mathrm H}(E_{2,1/3})>0\).

We first locate a nonempty intersection in a pair of Cantor cylinders
and use returns between smaller cylinders to construct a perfect subset.
Strict returns persist under the nonlinear perturbation of the circle
and yield a branching construction of positive Hausdorff dimension.
An exact weighted-return certificate then gives the quantitative lower
bound in Theorem~\ref{thm:circle-quantitative}.

\begin{definition}\label{def:cantor-circle}
For the following unit-circle construction, write
\[
C:=C_{1/3}
=\left\{\sum_{j=1}^{\infty}\frac{2\varepsilon_j}{3^j}:
                \varepsilon_j\in\{0,1\}\right\},
\qquad
E:=E_{2,1/3}.
\]
\end{definition}

The proof uses a specific Cantor cylinder. Consider points of the form
\[
x=\frac{566+b}{3^7},\qquad y=\frac{6336+a}{3^8},
\qquad a,b\in C.
\]
The integers \(566\) and \(6336\), written as ternary words of lengths
seven and eight, are \(0202222\) and \(22200200\).
The circle equation in these coordinates is
\[
(6336+a)^2+9(566+b)^2=3^{16}.
\]
At \((a,b)=(0,0)\) and \((1,1)\), the left side minus \(3^{16}\)
equals \(-18621\) and \(4249\), respectively.
These signs alone only show a crossing of the ordinary square.
Lemma~\ref{lem:interval-cover} proves that every retained Cantor square maps
onto its entire corner-value interval, which makes the crossing a
point of the Cantor product.

The second ingredient concerns isolated points. Throughout the selected
cylinder, the normal ratio \(y/(3x)\) lies strictly between \(11/9\)
and \(2027/1624\). In this interval, a four-digit expansion that is
asymptotically confined to the regions with only one possible first
digit must eventually be periodic. We prove this assertion directly
in Lemma~\ref{lem:periodicity}. An exact quadratic expansion then transfers
it to any hypothetical isolated circle point.
Such a point would have rational coordinates, whereas
Lemma~\ref{lem:rational} proves that the only rational points of \(E\)
are the two axis points. Thus the selected compact intersection has
no isolated points.

This argument establishes cardinality. It does not give a positive
Hausdorff dimension bound, nor does it exclude quadratic irrational
numbers from \(C\). Both distinctions matter: a perfect compact set
can have zero Hausdorff dimension, and periodicity here follows from
isolation in a particular two-coordinate level set.

\paragraph{Rational points on the unit circle.}\label{sec:rational}

We first record the arithmetic obstruction that will rule out isolated
points at the end of the proof.

\begin{lemma}\label{lem:rational}
The rational points of \(E\) are exactly \((0,1)\) and \((1,0)\).
\end{lemma}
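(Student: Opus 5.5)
The plan is to reduce the lemma to a statement about base-$3$ digit words and then to a $3$-adic equation. Let $(x,y)\in E$ be rational with $x,y\in(0,1)$; the cases $x\in\{0,1\}$ or $y\in\{0,1\}$ give the two axis points at once. In lowest terms a rational point of the unit circle is $x=a/c$, $y=b/c$ with $a^2+b^2=c^2$ primitive. Every prime factor of $c$ is $\equiv1\pmod4$, so $3\nmid c$. Hence the ternary expansions of $x$ and $y$ are purely periodic. Taking a common period $L$, we can write
\[
x=\frac{N}{3^L-1},\qquad y=\frac{M}{3^L-1},\qquad N=\sum_{j=0}^{L-1}2\varepsilon_j3^j,\quad M=\sum_{j=0}^{L-1}2\delta_j3^j,
\]
with $\varepsilon_j,\delta_j\in\{0,1\}$, since the digits of points of $C$ lie in $\{0,2\}$. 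Clearing denominators gives the integer identity $N^2+M^2=(3^L-1)^2$. Halving, and writing $D=(3^L-1)/2=1+3+\dots+3^{L-1}$ (the ternary repunit), this becomes
\[
N'^2+M'^2=D^2,\qquad N'=\sum_j \varepsilon_j 3^j,\quad M'=\sum_j\delta_j3^j .
\]

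The next step is to read this identity $3$-adically. Periodicity means $-x=\sum_{k\ge0}2\varepsilon_{k \bmod L}3^k$ in $\mathbb{Z}_3$, because $N/(3^L-1)=-N(1+3^L+3^{2L}+\cdots)$. So $u:=-x$ and $v:=-y$ are $3$-adic integers whose digits all lie in $\{0,2\}$, and they satisfy $u^2+v^2=1$ in $\mathbb{Z}_3$. Reducing mod $3$ gives $u_0^2+v_0^2\equiv1$ with $u_0,v_0\in\{0,2\}$, so exactly one of $u_0,v_0$ is $0$. By symmetry take $u_0=0$. Then $u\in3\mathbb{Z}_3$, and $v$ is forced to be the branch $v=-\sqrt{1-u^2}\equiv-1\pmod 3$. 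I would then propagate the digits inductively. Suppose the first nonzero digit of $u$ is at position $k$, so $u=2\cdot3^k+O(3^{k+1})$. Then $v=-1+\tfrac12u^2+\dots$ has its digits fixed up to position about $2k$, and they must coincide with those of $-1$ plus a small correction. Comparing this with the $\{0,2\}$ restriction on $v$ should force a contradiction unless $u=0$; the case $u=0$ gives the axis point $x=0$.

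To organise this I would set up a finite carry automaton. The states record the carry in computing $u^2+v^2$ digit by digit, and one shows that the only infinite admissible path is the trivial one. The archimedean constraint also helps as a pruning tool: one of $x,y$ is at least $1/\sqrt2$, hence lies in the top cylinder $[2/3,1]$, while the other is small. Because the same periodic digit word controls both the real and the $3$-adic size, this is a usable cross-check.

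The main obstacle is this digit propagation. It is not clear a priori that the $\{0,2\}$ constraint on both $u$ and $v$ is incompatible with $u^2+v^2=1$ in $\mathbb{Z}_3$, because the problem branches whenever the carries allow it. If the automaton does not close, I would fall back on combining the $3$-adic relation with the real relation through the common period $L$. Concretely, I would compare the lowest and highest ternary digits in $N'^2+M'^2=D^2$. The repunit $D^2$ has an explicit digit pattern at both ends, and I would show that no two $0/1$-digit numbers $N'$, $M'$ of length $L$ can reproduce that pattern at both ends simultaneously.
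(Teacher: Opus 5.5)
Your reduction matches the paper's: $3\nmid c$, purely periodic expansions, and the repunit identity $N'^2+M'^2=D^2$. Your $3$-adic reformulation $u^2+v^2=1$, with $u=-x$ and $v=-y$ having digits in $\{0,2\}$ and $u\in3\mathbb{Z}_3$, $v\equiv-1\pmod 3$, is also correct. The gap is that the decisive step is never carried out. You say that comparing $v=-1+\tfrac12u^2+\cdots$ with the $\{0,2\}$ restriction ``should force a contradiction''. You then call this the main obstacle and defer to a carry automaton that you never construct, or to an end-digit comparison in $N'^2+M'^2=D^2$ that you never perform. As written, nothing in the proposal excludes a nonaxis rational point.

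The missing observation is that no branching occurs: the contradiction appears at the single digit position $2k$. Write $u=3^kw$ with $k\geq1$ and $w\equiv2\pmod3$. The binomial terms of order $u^4$ and higher have valuation at least $4k>2k$, and $w^2/2\equiv2\pmod 3$. Hence
\[
v\equiv-1+2\cdot3^{2k}\pmod{3^{2k+1}}.
\]
Adding $2\cdot3^{2k}$ to the all-$2$ expansion of $-1$ makes the digit of $v$ at position $2k$ equal to $1$, which is impossible.

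Equivalently, consider the carries in $1+v$. They run through the initial block of $2$s and stop at the first $0$ (one exists, since $v\neq-1$), so the first nonzero digit of $1+v$ is $1$. On the other hand, $u^2=(1-v)(1+v)$ with $1-v\equiv2\pmod3$ a unit. This forces $v_3(1+v)=2k$ and $(1+v)/3^{2k}\equiv w^2/2\equiv2\pmod3$, a contradiction.

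This is exactly the paper's argument, done in finite integers. There $(N-A)(N+A)=B^2$ with $N+A\equiv2\pmod3$. Subtracting a $0/1$ word from the repunit needs no borrowing, so the leading nonzero digit of $N-A$ is $1$. Once this step is inserted, the automaton, the archimedean pruning and the highest-digit fallback are all unnecessary.
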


\begin{proof}
Suppose that \(x,y\) are rational, belong to \(C\), and satisfy
\(x^2+y^2=1\). If either coordinate is zero, the conclusion follows.
We may therefore assume that \(0<x,y<1\).
Choose a common denominator and write
\[
x=\frac{p}{d},\qquad y=\frac{q}{d},\qquad
p^2+q^2=d^2,\qquad \gcd(p,q,d)=1.
\]
If \(3\mid d\), reduction modulo \(3\) forces \(3\mid p,q\), a
contradiction. Hence \(d\) is coprime to \(3\).

Both ternary expansions are consequently purely periodic.
Indeed, choose \(k\geq1\) such that \(3^k\equiv1\pmod d\).
Multiplication by \(3^k\) preserves the fractional part of each
coordinate. Neither coordinate is a noninteger triadic rational, so
there is no endpoint ambiguity in its expansion.
Since the digits belong to \(\{0,2\}\), dividing the two period
numerators by \(2\) gives
\[
x=\frac{A}{N},\qquad y=\frac{B}{N},
\qquad N:=\frac{3^k-1}{2},
\]
where the ternary digits of \(A,B\) belong to \(\{0,1\}\).
All \(k\) ternary digits of \(N\) are \(1\), and
\[
A^2+B^2=N^2.
\]
Modulo \(3\), exactly one of \(A,B\) is \(1\).
After interchanging the coordinates, assume that
\(A\equiv1\pmod3\) and \(B\equiv0\pmod3\).

Let \(j\geq1\) be the exact exponent of \(3\) dividing \(B\).
The identity
\[
(N-A)(N+A)=B^2
\]
and \(N+A\equiv2\pmod3\) show that the exact exponent of \(3\)
dividing \(N-A\) is \(2j\).
Subtracting \(A\) from the all-one ternary expansion of \(N\)
requires no borrowing. Thus \(N-A\) has only \(0\) and \(1\) as
ternary digits, and its first nonzero digit is \(1\).
It follows that
\[
\frac{N-A}{3^{2j}}\equiv1\pmod3.
\]
Dividing the product identity by \(3^{2j}\) now gives a contradiction
modulo \(3\): its left side is \(1\cdot2\), while its right side is
a nonzero square. This excludes \(0<x,y<1\).
\end{proof}

\paragraph{A nonempty local intersection.}\label{sec:source}

We need a crossing criterion for actual Cantor points, valid in every
descendant square. The following elementary covering argument provides
it.

Let \(G\) be continuously differentiable on \([0,1]^2\). For ternary
words of the same length \(n\), with digits in \(\{0,2\}\), let \(u,v\)
be the corresponding left endpoints. The associated square and Cantor
product are
\[
Q:=[u,u+3^{-n}]\times[v,v+3^{-n}],
\qquad
(u+3^{-n}C)\times(v+3^{-n}C).
\]

\begin{lemma}\label{lem:interval-cover}
Suppose that
\[
0<\min_{[0,1]^2}G_b
\leq\max_{[0,1]^2}G_b
<\min_{[0,1]^2}G_a,
\qquad
\max_{[0,1]^2}G_a<3\min_{[0,1]^2}G_b.
\]
Then every square \(Q\) described above satisfies
\[
G\bigl((u+3^{-n}C)\times(v+3^{-n}C)\bigr)
=[G(u,v),G(u+3^{-n},v+3^{-n})].
\]
\end{lemma}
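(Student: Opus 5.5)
The approach is a nested-squares (Newhouse-type) covering argument. Write \(h:=3^{-n}\). The hypotheses give \(G_a>0\) and \(G_b>0\) on \([0,1]^2\), so \(G\) is strictly increasing in each variable. Hence on any square \([u',u'+h']\times[v',v'+h']\) the image of the square lies in the corner-value interval \(I:=[G(u',v'),G(u'+h',v'+h')]\). In particular the left-hand side of the claimed identity is contained in the right-hand side, and only the reverse inclusion needs work. I will show that the corner-value interval of a retained square is covered by the corner-value intervals of its four retained children. A target value \(t\) can then be tracked down a nested sequence of retained squares.

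\emph{Covering step.} Put \(s:=h/3\). The four children of \(Q\) have lower-left corners \((u,v)\), \((u,v+2s)\), \((u+2s,v)\), \((u+2s,v+2s)\) and side \(s\). Their corner-value intervals, taken in this order, are
\[
I_1=[G(u,v),G(u+s,v+s)],\qquad I_2=[G(u,v+2s),G(u+s,v+3s)],
\]
\[
I_3=[G(u+2s,v),G(u+3s,v+s)],\qquad I_4=[G(u+2s,v+2s),G(u+3s,v+3s)].
\]
The left end of \(I_1\) and the right end of \(I_4\) are exactly the ends of the parent interval. It therefore suffices to check that each consecutive pair overlaps, i.e.\ that the left endpoint of \(I_{k+1}\) does not exceed the right endpoint of \(I_k\). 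Each check uses the mean value theorem along horizontal and vertical segments.
\begin{itemize}
\item[(i)] \(G(u,v+2s)-G(u+s,v+s)\leq s\max G_b-s\min G_a<0\), which uses \(\max G_b<\min G_a\).
\item[(ii)] \(G(u+2s,v)-G(u+s,v+3s)=[G(u+2s,v)-G(u+s,v)]-[G(u+s,v+3s)-G(u+s,v)]\leq s\max G_a-3s\min G_b<0\), which is precisely the second hypothesis.
\item[(iii)] \(G(u+2s,v+2s)-G(u+3s,v+s)\leq s\max G_b-s\min G_a<0\), as in (i).
\end{itemize}
So \(I_1\cup I_2\cup I_3\cup I_4\) is the whole parent interval. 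The middle pair (ii) is the delicate step: it is the only place where the factor \(3\) enters, because the jump from the second child to the third trades a vertical span of \(3s\) against a horizontal gap of \(s\). I expect getting this bookkeeping of the ordering right to be the main point of the proof.

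\emph{Conclusion.} Fix \(t\) in \([G(u,v),G(u+h,v+h)]\). By the covering step choose a child square \(Q_1\) whose corner interval contains \(t\), then a child \(Q_2\) of \(Q_1\), and so on. All squares arising this way have digit words in \(\{0,2\}\), and the covering step applies at every level because the hypotheses are global on \([0,1]^2\). The nested compact squares shrink to a single point \((x,y)\in(u+3^{-n}C)\times(v+3^{-n}C)\). For each \(m\), monotonicity gives that \(G(x,y)\) and \(t\) lie in the same interval \(I(Q_m)\), whose length is at most \(2\cdot 3^{-n-m}\max(G_a+G_b)\to0\). Hence \(G(x,y)=t\), which gives the reverse inclusion and proves the lemma.
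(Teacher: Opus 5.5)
Your proof is correct and follows the paper's argument essentially step for step: the same ordering of the four children, the same three overlap estimates with the factor $3$ entering only at the middle pair, and the same nested selection of retained squares. The differences are cosmetic. You finish with the shrinking length of the corner-value intervals rather than continuity, and your one-sided checks (the left endpoint of $I_{k+1}$ is at most the right endpoint of $I_k$) already yield the covering, by taking the largest $k$ whose left endpoint is at most $t$, so the endpoint ordering the paper records is not needed.
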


\begin{proof}
Write \(h\) for the side length of a retained square with lower-left
corner \((u,v)\). Its four children have lower-left offsets
\[
(0,0),\quad(0,2h/3),\quad(2h/3,0),\quad(2h/3,2h/3).
\]
Positivity of both partial derivatives implies that the image of each
ordinary child square is the interval between its lower-left and
upper-right corner values. Both sets of endpoints occur in the
displayed order, since increasing the first coordinate by \(2h/3\)
and decreasing the second by \(2h/3\) increases \(G\).

The first two intervals overlap because
\[
G(u+h/3,v+h/3)-G(u,v+2h/3)
\geq\frac h3\bigl(\min G_a-\max G_b\bigr)>0.
\]
The middle two overlap because
\[
G(u+h/3,v+h)-G(u+2h/3,v)
\geq h\min G_b-\frac h3\max G_a>0.
\]
The last two overlap by the same estimate as the first two.
Their union is therefore the whole parent corner-value interval.

Given any value in this interval, choose a child whose corner-value
interval contains it and repeat. The resulting nested closed squares
have diameters tending to zero. Their common point belongs to the
retained Cantor product, and continuity gives the prescribed value.
The same argument applies inside every descendant square.
\end{proof}

For the remainder of this construction, define
\[
F(a,b):=(6336+a)^2+9(566+b)^2
\]
and
\[
S:=\left\{\left(\frac{566+b}{3^7},\frac{6336+a}{3^8}\right):
                       a,b\in C,\ F(a,b)=3^{16}\right\}.
\]
The two allowed ternary prefixes show that \(S\subset E\).

\begin{proposition}\label{prop:source}
The set \(S\) is nonempty and compact.
\end{proposition}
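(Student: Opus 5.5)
Compactness follows from continuity. The map \((a,b)\mapsto F(a,b)\) is continuous, and \(C\times C\) is compact, so the fibre \(\{(a,b)\in C\times C: F(a,b)=3^{16}\}\) is a closed subset of a compact set, hence compact. The set \(S\) is its image under the continuous affine map \((a,b)\mapsto\bigl((566+b)/3^7,(6336+a)/3^8\bigr)\), so \(S\) is compact. Nonemptiness is the real content, and for it I will apply Lemma~\ref{lem:interval-cover} with \(n=0\), that is, to the whole square \([0,1]^2\) with Cantor product \(C\times C\).

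The function \(G\) in Lemma~\ref{lem:interval-cover} has first argument corresponding to the coordinate increased by \(2h/3\) and second to the one decreased. I will take \(G(s,t):=F(s,t)\), with first variable \(a\) and second \(b\), and check the hypotheses on \([0,1]^2\). The partial derivatives are
\[
G_a=2(6336+a)\in[12672,12674],\qquad G_b=18(566+b)\in[10188,10206].
\]
The conditions become \(0<10188\), \(10206<12672\), and \(12674<3\cdot10188=30564\). All three hold comfortably. If the roles of \(a\) and \(b\) turn out to be reversed in the lemma's convention, I will swap the variables and recheck; the second inequality fails for the swapped assignment, so the assignment above is the intended one.

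The lemma then gives
\[
F(C\times C)=[F(0,0),F(1,1)]=[3^{16}-18621,\;3^{16}+4249],
\]
using the corner values recorded before the lemma. I only need to verify the arithmetic \(6336^2+9\cdot566^2=3^{16}-18621\) and \(6337^2+9\cdot567^2=3^{16}+4249\). Since \(3^{16}\) lies in this interval, some \((a,b)\in C\times C\) satisfies \(F(a,b)=3^{16}\), so \(S\neq\emptyset\).

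The only delicate point is matching the orientation convention of Lemma~\ref{lem:interval-cover}, which is settled above; everything else is routine numerical verification.
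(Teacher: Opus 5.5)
Your proposal is correct and matches the paper's proof. You use the same derivative bounds $12672\leq F_a\leq 12674$ and $10188\leq F_b\leq 10206$ to apply Lemma~\ref{lem:interval-cover} on the full square, the same corner values $-18621$ and $4249$ to locate $3^{16}$, and the same continuity-plus-compactness argument for compactness; your orientation check ($a$ as the first variable, since $G_a>G_b$ is required) is also right.
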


\begin{proof}
On \([0,1]^2\),
\[
12672\leq F_a\leq12674,\qquad
10188\leq F_b\leq10206.
\]
In particular, \(12672>10206\) and \(12674<3\cdot10188\), so
Lemma~\ref{lem:interval-cover} applies. Since
\[
F(0,0)-3^{16}=-18621,\qquad
F(1,1)-3^{16}=4249,
\]
there is a point of \(C\times C\) with value \(3^{16}\).
Compactness follows from continuity and compactness of \(C\times C\).
\end{proof}

We will use two consequences of this calculation.
First, a retained square contains a point of
\(F(a,b)=3^{16}\) in its Cantor product if and only if its corner
values bracket \(3^{16}\), with equality allowed.
Second, throughout the entire source rectangle,
\begin{equation}\label{eq:normal-band}
\frac{11}{9}<\frac{704}{567}
\leq\frac{6336+a}{9(566+b)}
=\frac{y}{3x}
\leq\frac{6337}{5094}<\frac{2027}{1624}.
\end{equation}
The strict endpoint margins are \(11/567\) and
\(17125/4136328\), respectively.

\paragraph{Eventual periodicity of digit sequences.}\label{sec:digits}

Fix a real number
\begin{equation}\label{eq:r-band}
\frac{11}{9}<r<\frac{2027}{1624}.
\end{equation}
For a sequence \(c_n\in\{0,1,r,r+1\}\), indexed by \(n\geq0\), define
\[
T_n:=\sum_{j=0}^{\infty}\frac{c_{n+j}}{3^{j+1}}.
\]
Thus \(T_{n+1}=3T_n-c_n\).
The intervals relevant to a single admissible first digit are
\begin{equation}\label{eq:exclusive-domains}
\begin{aligned}
I_0&:=[0,1/3],
&I_1&:=[(r+1)/6,r/3],\\
I_r&:=[(r+3)/6,(r+1)/3],
&I_{r+1}&:=[(3r+1)/6,(r+1)/2].
\end{aligned}
\end{equation}
They are closed: the argument below must allow errors from curvature
and cannot discard their endpoints.

\begin{lemma}\label{lem:periodicity}
For \(r\) satisfying \eqref{eq:r-band}, the sequence \((c_n)\) is
eventually periodic whenever
\[
\lim_{n\to\infty}\operatorname{dist}(T_n,I_{c_n})=0.
\]
\end{lemma}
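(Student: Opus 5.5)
The approach is to convert the approximate condition $\operatorname{dist}(T_n,I_{c_n})\to0$ into a finite list of forbidden digit words. The surviving infinite itineraries should then form a subshift of finite type whose graph contains no branching cycles. Any sequence avoiding those words from some index on is then eventually periodic.

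\emph{Step 1: symbolic coding.} Write $f_c(t)=3t-c$, so $T_{n+1}=f_{c_n}(T_n)$. Since $T_n=\sum_{j\ge0}c_{n+j}3^{-j-1}$, the value $T_n$ is a continuous (indeed $1$-Lipschitz in the $3$-adic metric) function $\Phi(\sigma^n c)$ of the shifted sequence. Let
\[
X:=\{\omega\in\{0,1,r,r+1\}^{\mathbb N}:\Phi(\sigma^k\omega)\in I_{\omega_k}\ \text{for all }k\ge0\}.
\]
This is a closed shift-invariant set. The hypothesis says exactly that every limit point of $(\sigma^n c)$ lies in $X$.

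\emph{Step 2: the transition graph.} Using \eqref{eq:r-band}, I would compute the images $f_c(I_c)$ and record which $I_{c'}$ they meet. For example, $f_1(I_1)=[(r-1)/2,r-1]\subset I_0$ and $f_r(I_r)=[(3-r)/2,1]\subset I_{r+1}$. Since $f_0(I_0)=[0,1]$ and $f_{r+1}(I_{r+1})$ are long, one level is not enough. I would refine by cylinders: the preimages $f_c^{-1}(I_{c'})\cap I_c$, and then iterate a few times. The aim is to show that the refined graph has the following property: each of its strongly connected components is a single cycle. This is where the exact endpoints $11/9$ and $2027/1624$ should enter. They ought to be precisely the values at which an extra branch (a second cycle through some vertex) would first appear, so that strictly inside the band the relevant images fall into gaps between the $I_c$ with positive margin.

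\emph{Step 3: from exact to approximate.} Step 2 gives finitely many words $w$ of some length $L$ with the following property: no $\omega$ whose $k$-th tail satisfies $\Phi\in I_{\omega_k}$ for $k<L$ can begin with $w$. Moreover, by the strict margins, every such realisation misses the constraint by at least some $\delta>0$. Because $\Phi$ is uniformly continuous and the intervals are closed, I expect this margin to persist. That is, there should be $\delta>0$ such that if $\operatorname{dist}(T_{n+k},I_{c_{n+k}})<\delta$ for $0\le k<L$, then $c_n\cdots c_{n+L-1}$ is not forbidden. Choosing $N$ with all distances below $\delta$ for $n\ge N$, the tail $(c_n)_{n\ge N}$ is an infinite path in a finite graph whose only recurrent parts are simple cycles. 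Such a path eventually enters one cycle and never leaves. Hence $(c_n)$ is eventually periodic.

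\emph{Main obstacle.} The hard step is Step 2. One must find a refinement depth at which the long images $f_0(I_0)=[0,1]$ and $f_{r+1}(I_{r+1})\approx[(r-1)/2,(r+1)/2]$ no longer permit branching. One must also verify, uniformly for $r$ in the open band, that each branching transition is excluded with a margin. I would first track the orbits of the endpoints of the $I_c$ under the branches. The natural conjecture is that the only surviving itineraries are those of a few periodic points, such as the fixed point $0$ of $f_0$, the fixed point $(r+1)/2$ of $f_{r+1}$, and short cycles like $0\to\cdots$ through $I_1$. I would then check directly, as a finite computation with linear inequalities in $r$, that every other word of bounded length is forced into a gap.
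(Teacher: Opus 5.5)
Your Steps 1 and 3 are a valid reduction. For fixed $r$, a word whose pulled-back closed constraints are incompatible is incompatible with a positive gap, so it cannot occur at late positions. Equivalently, by your Step 1 and compactness, any word whose cylinder misses $X$ occurs only finitely often. An infinite path in a finite graph whose nontrivial strongly connected components are simple cycles is then eventually periodic. This is the mechanism the paper uses for Lemma~\ref{lem:good-slopes}, whose first band $(89/73,171/137)$ even contains $(11/9,2027/1624)$.

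As a proof of this lemma, however, the proposal has a genuine gap: Step 2, which is the entire content, is not carried out. Apart from $f_1(I_1)\subset I_0$ and $f_r(I_r)\subset I_{r+1}$, you exhibit no forbidden word. You give no argument that any finite refinement removes branching. Your expectation that $11/9$ and $2027/1624$ are where branching first appears is also not correct. The band of Lemma~\ref{lem:good-slopes} is strictly larger, and at $r=2027/1624$ every margin listed below is still positive. These endpoints are simply where the particular inequalities of the hand proof stop holding.

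What must be supplied is an explicit list of forbidden patterns with positive margins, together with the combinatorics that forces eventual periodicity. The paper does this by hand.
\begin{itemize}
\item Your inclusion $f_1(I_1)\subset I_0$ shows that a late $1$ is followed by $0$, with margin $(7-5r)/6$. The reflection $T\mapsto(r+1)/2-T$, $c\mapsto r+1-c$ gives that a late $r$ is followed by $r+1$.
\item After $10$ one has $T_{n+2}=9T_n-3\in[3(r-1)/2,\,3(r-1)]$ up to $O(\delta_n)$. This excludes a next digit $0$ or $r+1$, with margins $(9r-11)/6$ and $(19-15r)/6$. With the reflected statements, once a middle digit occurs late, the tail is a concatenation of the blocks $10$ (low) and $r(r+1)$ (high).
\item The pattern low, high, high is then excluded. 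The paper uses the whole block tail to get $T_n\geq 3/8+20(2r-1)/729$, which exceeds $r/3$ by $(2027-1624r)/5832$. In your finite-window language, the five-letter word $1\,0\,r\,(r+1)\,r$ already has gap $(171-137r)/486>0$. High, low, low is excluded by reflection.
\item If no middle digit occurs late, $0(r+1)(r+1)$ is excluded with margin $(9r-11)/54$, and $(r+1)00$ by reflection.
\end{itemize}
Finally, a two-letter sequence avoiding $abb$ and $baa$ is eventually constant or alternating. This gives eventual periodicity in both regimes.
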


\begin{proof}
Put \(\delta_n:=\operatorname{dist}(T_n,I_{c_n})\).
Every exclusion below has a strictly positive margin depending only
on the fixed parameter \(r\). Because
\(\displaystyle\lim_{n\to\infty}\delta_n=0\), these exclusions hold
simultaneously after a sufficiently large index.
The reflection
\[
T_n\longmapsto\frac{r+1}{2}-T_n,\qquad
c_n\longmapsto r+1-c_n
\]
preserves the hypotheses and exchanges the digits \(0,r+1\) and \(1,r\).

If \(c_n=1\), then \(T_{n+1}\leq r-1+3\delta_n\).
Every domain for a nonzero next digit has lower endpoint at least
\((r+1)/6\). Hence \(c_{n+1}\ne0\) would imply
\[
\frac{7-5r}{6}\leq3\delta_n+\delta_{n+1}.
\]
The left side is positive, so sufficiently late occurrences of \(1\)
must be followed by \(0\). Reflection shows that sufficiently late
occurrences of \(r\) must be followed by \(r+1\).

After the pair \(10\), the recurrence gives
\[
\frac{3(r-1)}2-9\delta_n
\leq T_{n+2}=9T_n-3
\leq3(r-1)+9\delta_n.
\]
This excludes the next digit \(0\) or \(r+1\) at sufficiently late
indices, since the respective separating margins are
\[
\frac{3(r-1)}2-\frac13=\frac{9r-11}{6}>0,
\qquad
\frac{3r+1}{6}-3(r-1)=\frac{19-15r}{6}>0.
\]
The latter inequality uses \(2027/1624<19/15\).
The reflected conclusion holds after the pair \(r(r+1)\).
Consequently, once a middle digit \(1\) or \(r\) occurs sufficiently
late, the entire subsequent sequence consists of the two blocks
\[
10,\qquad r(r+1).
\]

Call these the low block and the high block.
An infinite sequence of low blocks has value \(3/8\).
If three successive blocks are low, high, high, then replacing all
other blocks by low blocks gives the lower bound
\[
T_n\geq\frac38+(4r-2)\left(\frac1{81}+\frac1{729}\right)
       =\frac38+\frac{20(2r-1)}{729}.
\]
Its first digit is \(1\), which also requires \(T_n\leq r/3+\delta_n\).
But
\[
\frac38+\frac{20(2r-1)}{729}-\frac r3
=\frac{2027-1624r}{5832}>0.
\]
Thus low, high, high is eventually forbidden, and reflection also
forbids high, low, low.
A binary sequence avoiding these two words is eventually constant
or alternating: after its first change, either forbidden word would
appear if the next symbol did not change again.
The original digit sequence is therefore eventually periodic.

It remains to consider a tail containing only \(0\) and \(r+1\).
At an occurrence of \(0,(r+1),(r+1)\), the terminal digit's domain
and the recurrence give
\[
T_n=\frac{r+1+T_{n+2}}9
\geq\frac{9r+7}{54}-\frac{\delta_{n+2}}9.
\]
The initial digit \(0\) requires \(T_n\leq1/3+\delta_n\).
These inequalities contradict one another for sufficiently large \(n\),
because
\[
\frac{9r+7}{54}-\frac13=\frac{9r-11}{54}>0.
\]
Reflection excludes \((r+1),0,0\) as well.
The same binary-word argument proves eventual constancy or alternation
in this case.
\end{proof}

The hypothesis of Lemma~\ref{lem:periodicity} permits arbitrary
vanishing errors. In particular, it applies when the domains arise
as linear approximations to a curved level set. This is the reason
for retaining the error terms throughout the proof.

\paragraph{The perfect subset and uncountability.}\label{sec:main}

We now combine the local covering property with the digit lemma.

\begin{theorem}\label{thm:main}
The set \(E\) has cardinality \(2^{\aleph_0}\).
\end{theorem}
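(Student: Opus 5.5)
The strategy is to show that the compact set \(S\subset E\) of Proposition~\ref{prop:source} is perfect. A nonempty perfect compact subset of \(\mathbb R^2\) has cardinality \(2^{\aleph_0}\), and \(E\subset\mathbb R^2\) has at most continuum cardinality, so this suffices. Since \(S\) is nonempty and compact, it remains to rule out isolated points. Suppose \((x,y)\in S\) is isolated, with parameters \((a,b)\in C\times C\). I will show that this forces \(x,y\) to be rational with \(0<x,y<1\), contradicting Lemma~\ref{lem:rational}.

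\textbf{Digit structure at an isolated point.} Write the ternary digit sequences of \(a\) and \(b\) with digits in \(\{0,2\}\). They determine a nested sequence of retained squares \(Q_n\) of side \(3^{-n}\) containing \((a,b)\). By the remark after Proposition~\ref{prop:source}, a child of \(Q_n\) meets the level set \(F=3^{16}\) in its Cantor product exactly when its corner values bracket \(3^{16}\). Since the point is isolated, for all large \(n\) exactly one child of \(Q_n\) has this property. Otherwise two distinct children would both contain solutions, and as \(n\to\infty\) we would obtain solutions arbitrarily close to \((a,b)\); more precisely, a second bracketing child at arbitrarily late levels produces distinct solutions converging to the point.

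Next I rescale. On \(Q_n\), the linearisation of \(F\) at \((a,b)\) has gradient ratio \(F_b/F_a\), which tends to \(9(566+b)/(6336+a)\) up to errors \(O(3^{-n})\) coming from the Hessian. Combining the two coordinates into one number with weights \(1\) and \(r\), where \(r\) is the limiting normal ratio (equal to \(y/(3x)\) after normalisation), gives the four-letter alphabet \(\{0,1,r,r+1\}\). In this alphabet \(c_n\) encodes which child is chosen: \(1\) for the \(b\)-digit and \(r\) for the \(a\)-digit, or vice versa, chosen to match the band \eqref{eq:normal-band}. The quantity \(T_n\) is then the rescaled position of the level set inside \(Q_n\). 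The condition that only child \(c_n\) brackets the level value says that \(T_n\) lies in the exclusive domain \(I_{c_n}\) of \eqref{eq:exclusive-domains}, up to an error \(\delta_n\to0\) from curvature and from the drift of the ratio toward \(r\). By \eqref{eq:normal-band}, \(r\) satisfies \eqref{eq:r-band}. Lemma~\ref{lem:periodicity} then shows that \((c_n)\) is eventually periodic, so the digit sequences of \(a\) and \(b\) are eventually periodic and \(a,b\) are rational. Hence \(x=(566+b)/3^7\) and \(y=(6336+a)/3^8\) are rational and lie strictly between \(0\) and \(1\), which contradicts Lemma~\ref{lem:rational}.

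\textbf{Main obstacle.} The hard step is the exact quadratic expansion that identifies \(T_n\) and the domains \(I_c\) with uniformly vanishing error. I would expand
\[
F(a,b)-3^{16}=F_a(a-u_n)+F_b(b-v_n)+\tfrac12\bigl(2(a-u_n)^2+18(b-v_n)^2\bigr)+\bigl(F(u_n,v_n)-3^{16}\bigr),
\]
where \((u_n,v_n)\) is the lower-left corner of \(Q_n\). Dividing by \(F_a(u_n,v_n)\,3^{-n}\), the quadratic terms contribute \(O(3^{-n})\), and the ratio \(F_b(u_n,v_n)/F_a(u_n,v_n)\) converges to \(r\). This gives \(\delta_n=O(3^{-n})+|r_n-r|\to0\). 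A point needing care is that bracketing by child corners corresponds to closed intervals, which is why the domains are taken closed. I would also check the direction of the implication "unique bracketing child \(\Rightarrow T_n\) near \(I_{c_n}\)": the complement of the overlap regions of the child intervals, as in the proof of Lemma~\ref{lem:interval-cover}, is exactly the union of the \(I_c\) in rescaled coordinates.
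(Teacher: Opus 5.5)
Your proposal is correct and follows the paper's proof. Isolation forces a unique bracketing child at every late depth, which puts the normalized tail \(T_n\) within a vanishing error of the closed exclusive domain \(I_{c_n}\); Lemma~\ref{lem:periodicity} and Lemma~\ref{lem:rational} then give the contradiction. The only variation is that you Taylor-expand at the cylinder corners and absorb the drift \(|r_n-r|=O(3^{-n})\) into \(\delta_n\). The paper instead expands at the isolated point itself with \(r\) frozen, so that \(T_{n+1}=3T_n-c_n\) holds exactly and the only error is the one-signed quadratic term \(\eta_n\). One small slip: to get a slope satisfying \eqref{eq:r-band} you must divide by \(F_b\), not \(F_a\), since \(F_b/F_a\to1/r\).
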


\begin{proof}
By Proposition~\ref{prop:source}, \(S\) is a nonempty compact subset
of \(E\). We will show that \(S\) has no isolated points.
Suppose to the contrary that
\[
(x,y)=\left(\frac{566+b}{3^7},\frac{6336+a}{3^8}\right)\in S
\]
is isolated. Fix its normal ratio
\[
r:=\frac{6336+a}{9(566+b)}=\frac{y}{3x}.
\]
By \eqref{eq:normal-band}, this fixed value satisfies
\eqref{eq:r-band}.

Let \(a_n,b_n\in C\) be the tails of the allowed ternary expansions
of \(a,b\) after their first \(n\) digits. Write their next digits as
\(2\xi_n,2\beta_n\), where \(\xi_n,\beta_n\in\{0,1\}\).
Then
\[
a_n=\frac{2\xi_n+a_{n+1}}3,\qquad
b_n=\frac{2\beta_n+b_{n+1}}3.
\]
Define
\[
T_n:=\frac{ra_n+b_n}{2},\qquad c_n:=r\xi_n+\beta_n.
\]
These are precisely the tails and digits introduced before Lemma~\ref{lem:periodicity};
in particular, \(T_{n+1}=3T_n-c_n\) holds exactly.

The retained square of depth \(n\) containing \((a,b)\) can be
parametrised by
\[
\bigl(a+3^{-n}(s-a_n),\,b+3^{-n}(t-b_n)\bigr),
\qquad s,t\in[0,1].
\]
The quadratic expansion is exact:
\begin{align}
&\frac{F\bigl(a+3^{-n}(s-a_n),\,b+3^{-n}(t-b_n)\bigr)-F(a,b)}
       {36(566+b)3^{-n}}\notag\\
&\hspace{12mm}=\frac{rs+t}{2}-T_n
 +\frac{3^{-n}\bigl((s-a_n)^2+9(t-b_n)^2\bigr)}{36(566+b)}.
\label{eq:taylor}
\end{align}
The final term lies between zero and
\[
\eta_n:=\frac{5}{10188}\,3^{-n}.
\]
The normal \(r\) is held fixed at the original point throughout this
calculation; it is not replaced by a normal at a cylinder corner.

For a child with digit label \(c\in\{0,1,r,r+1\}\), the linear function
\((rs+t)/2\) takes values in
\[
J_c:=\left[\frac c3,\frac c3+\frac{r+1}{6}\right].
\]
If \(T_n\) is more than \(\eta_n\) from both endpoints inside \(J_c\),
the left side of \eqref{eq:taylor} has opposite signs at the child's
lower-left and upper-right corners. Lemma~\ref{lem:interval-cover}
then supplies an actual point of the Cantor product on the level
\(F=3^{16}\) in this child.

Because the original point is isolated, every sufficiently deep
retained square containing it has exactly one child containing a
point of that level set. The retained intervals at any fixed depth
are disjoint, including their endpoints, so a point in a different
child would be a different point of \(S\).
The original point belongs to its prescribed child, and therefore
\(T_n\in J_{c_n}\) exactly.

We claim that for all sufficiently large \(n\),
\begin{equation}\label{eq:approx-exclusive}
\operatorname{dist}(T_n,I_{c_n})\leq\eta_n.
\end{equation}
Here the \(I_c\) are the closed intervals in
\eqref{eq:exclusive-domains}.
Choose \(n\) so large that
\(\eta_n<\min\{1/3,(r-1)/3\}\).
If \(c_n=0\) and \(T_n>1/3+\eta_n\), then \(T_n\) is more than
\(\eta_n\) inside \(J_1\); its distance from the other endpoint of
\(J_1\) is at least \(1/3\).
This would give a point in a competing child.
If \(c_n=1\), the alternatives
\[
T_n<(r+1)/6-\eta_n
\quad\hbox{or}\quad
T_n>r/3+\eta_n
\]
place \(T_n\) more than \(\eta_n\) inside \(J_0\) or \(J_r\),
respectively. The margins at their other endpoints are at least
\(1/3\) and \((r-1)/3\).
Both alternatives again give a competing child.
The cases \(c_n=r\) and \(c_n=r+1\) follow by reflecting the linear
intervals. This last argument only uses the uniform absolute error
bound in \eqref{eq:taylor}; it does not assume reflection symmetry
of the quadratic term.
This proves \eqref{eq:approx-exclusive}, with no omitted endpoint
case.

Since \(\displaystyle\lim_{n\to\infty}\eta_n=0\),
Lemma~\ref{lem:periodicity} makes \((c_n)\) eventually periodic.
The four values \(0,1,r,r+1\) are distinct, so each digit \(c_n\)
uniquely determines \((\xi_n,\beta_n)\).
Thus the ternary expansions of both \(a\) and \(b\) are eventually
periodic, and \(x,y\) are rational.
Lemma~\ref{lem:rational} excludes such a point in \(S\), whose two
coordinates are strictly between zero and one. This contradiction
shows that \(S\) is perfect.

For completeness, a nonempty perfect compact subset of the plane
contains continuum many points. Inside a relative neighbourhood
meeting the set, choose two distinct points and smaller disjoint
closed neighbourhoods around them. Each can be chosen to have
nonempty relative interior. Repeat in each neighbourhood, with
diameters decreasing to zero. Compactness gives a limiting point
for every infinite binary sequence, and disjointness makes those
points distinct. Hence \(\#S\geq2^{\aleph_0}\).
Since \(S\subset E\subset\mathbb R^2\), the reverse cardinality bound
is automatic, proving the theorem.
\end{proof}

The construction does not require immediate branching at every scale.
The first subdivision gives a concrete illustration.

\begin{example}\label{ex:first-subdivision}
For a child with first ternary digits \(d,e\in\{0,2\}\) in the
\(a,b\) coordinates, write
\(a=(d+s)/3\), \(b=(e+t)/3\), with \(s,t\in[0,1]\).
Multiplying \(F(a,b)-3^{16}\) by \(9\) gives
\[
(19008+d+s)^2+9(1698+e+t)^2-3^{18}.
\]
Its corner-value intervals are
\[
\begin{array}{c@{\qquad}r@{\ ,\ }l}
\toprule
(d,e)&\multicolumn{2}{c}{\text{corner-value interval}}\\
\midrule
(0,0)&[-167589&-98999]\\
(0,2)&[-106425&-37799]\\
(2,0)&[-91553&-22959]\\
(2,2)&[-30389&38241]\\
\bottomrule
\end{array}
\]
Only the child \((2,2)\) meets the level zero.
Lemma~\ref{lem:interval-cover} makes this a statement about actual
Cantor points, not merely ordinary-square crossings.
The theorem proves that the surviving compact intersection is
perfect despite this first subdivision having a single surviving
child.
\end{example}

\paragraph{Strict returns and positive dimension.}

\begin{lemma}\label{lem:strict-returns}
Let \(L(s,t)\) be a positive linear function and \(B\) a compact
set of levels such that every \(u\in B\) has two incomparable
nonempty Cantor words returning its normalized level into
\(\operatorname{int}B\).
Then every sufficiently small Lipschitz perturbation of \(L\),
at every level sufficiently close to \(B\), has a positive-dimensional
zero set in \(C_\lambda^2\).
\end{lemma}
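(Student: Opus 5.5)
We interpret the statement in the natural self-similar setting. Here \(L(s,t)=\theta s+t\) (up to a positive factor), with positive coefficients, on \([0,1]^2\). A Cantor word \(w\) of length \(m\) is a pair of \(\{0,1\}\)-words of length \(m\). It selects the cylinder square with lower-left corner \((u_w,v_w)\) and side \(\lambda^m\) (in the normalisation where \(C_\lambda\) has gaps of length \(1-2\lambda\)). The normalized level after \(w\) is
\[
\Phi_w(u):=\lambda^{-m}\bigl(u-L(u_w,v_w)\bigr),
\]
so that \((s,t)\) in the child satisfies \(L=u\) iff the rescaled point satisfies \(L=\Phi_w(u)\). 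The hypothesis says that each \(u\in B\) has two incomparable words \(w_1(u),w_2(u)\) with \(\Phi_{w_i(u)}(u)\in\operatorname{int}B\). The plan is to upgrade this pointwise return property to a uniform one, transfer it to perturbations, and then run a standard tree (mass distribution) argument.

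\textbf{Step 1: uniformity.} Each \(\Phi_w\) is an affine expanding map, hence continuous. So the condition \(\Phi_{w_i(u)}(u)\in\operatorname{int}B\) is open in \(u\). By compactness of \(B\) we extract a finite family \(\mathcal W\) of words, of lengths at most \(M\), and a margin \(\rho>0\), with the following property. For every \(u\) in the closed \(\rho\)-neighbourhood \(B_\rho\) of \(B\), there are two incomparable \(w_1,w_2\in\mathcal W\) such that \(\Phi_{w_i}(u)\) lies in \(B\), and hence at distance at least \(2\rho\) from the complement of \(B_{3\rho}\). Here we use that the points land in \(\operatorname{int}B\) with a uniform positive distance \(3\rho\) to \(\partial B\) after shrinking \(\rho\).

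\textbf{Step 2: perturbation.} Let \(G=L+P\) with \(\operatorname{Lip}(P)\le\varepsilon\), and let \(u_0\) be a level within \(\rho\) of \(B\). Descending a word \(w\) of length \(m\), the rescaled function on the child is
\[
G_w(s,t)=\lambda^{-m}\bigl(G(u_w+\lambda^m s,\,v_w+\lambda^m t)-G(u_w,v_w)\bigr).
\]
This is again \(L\) plus a perturbation of Lipschitz constant \(\le\varepsilon\), because rescaling preserves Lipschitz constants. The normalized level becomes \(\lambda^{-m}(u-G(u_w,v_w))\), which differs from \(\Phi_w(u)\) by at most \(\lambda^{-m}|P(u_w,v_w)-P(0,0)|\le\lambda^{-M}\cdot 2\varepsilon\). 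To make this work we first normalise so that \(P(0,0)=0\) and \(\sup|P|\le\varepsilon\cdot\operatorname{diam}\). Choosing \(\varepsilon\) so that \(2\varepsilon\lambda^{-M}<\rho\), the perturbed normalized level after either of the two chosen words stays within \(\rho\) of \(B\). So the situation reproduces itself at every stage with the same constants. This self-reproduction is the key point, and it is the step I expect to need the most care, because the error must not accumulate across generations. It does not accumulate, since each generation is re-centred and the perturbation class is invariant under rescaling.

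\textbf{Step 3: the tree and the conclusion.} Iterating Step 2 gives a binary tree of nested cylinder squares. Each node has two incomparable children of depth at most \(M\) below it, so distinct branches give disjoint squares. Every node carries a level within \(\rho\) of \(B\), and since \(B\) is compact the levels stay bounded. Each infinite branch determines a unique point of \(C_\lambda^2\), and continuity of \(G\) gives \(G=u_0\) there, because the normalized levels stay bounded and the unnormalized discrepancy is at most \(\lambda^{n}\cdot O(1)\to0\).

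Put the uniform mass distribution on this tree, with mass \(2^{-k}\) at generation \(k\). A square of side \(r\approx\lambda^{kM}\) meets a bounded number of generation-\(k\) squares, because the squares are separated by the Cantor gaps. The mass distribution principle then gives
\[
\dim_{\mathrm H}\ge\frac{\log2}{M\log(1/\lambda)}>0.
\]

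The only remaining subtlety is to state precisely what "sufficiently small Lipschitz perturbation" means. I would take it to mean that \(P\) is small in the Lipschitz norm, which controls both the sup bound and the Lipschitz constant used in Step 2.
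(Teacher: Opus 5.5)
Your proof is correct and is essentially the paper's argument. You use compactness to get finitely many incomparable return pairs with maximal length \(M\) and a uniform margin. You then observe that the rescaled perturbation \(\lambda^{-|w|}\bigl(P(z_w+\lambda^{|w|}(s,t))-P(z_w)\bigr)\) vanishes at the origin, has no larger Lipschitz constant, and shifts the normalized level by at most a multiple of \(\lambda^{-M}\varepsilon\). Finally, the resulting separated binary tree with mass \(2^{-k}\) per generation gives dimension at least \(\log2/(M\log(1/\lambda))\) by the mass distribution principle. The only cosmetic difference is that you take the closed \(\rho\)-neighbourhood of \(B\) as the invariant set of levels, whereas the paper keeps the levels inside \(B\) using an inner margin \(\eta\); the two bookkeeping choices are equivalent.
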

\begin{proof}
For a word \(w\), let \(z_w\) be its lower corner and let
\(\Phi_w(u)=\lambda^{-|w|}(u-L(z_w))\).
The two strict inequalities for a chosen pair persist on an
open interval about \(u\). Compactness gives a finite collection
of pairs, an open neighborhood \(U\supset B\), a maximum length
\(M\), and a margin \(\eta>0\) such that every \(u\in U\) has
two selected returns at distance at least \(\eta\) from
\(\mathbb R\setminus B\).

Consider \(L+e-u=0\), where \(e(0,0)=0\) and
\(\operatorname{Lip}(e)<\eta\lambda^M/2\) in the sup norm.
On restricting to \(w\), the new error is
\[
\lambda^{-|w|}
\bigl(e(z_w+\lambda^{|w|}(s,t))-e(z_w)\bigr).
\]
Its Lipschitz constant does not increase. The new level differs
from \(\Phi_w(u)\) by \(\lambda^{-|w|}e(z_w)\), of absolute
value less than \(\eta/2\). Both words therefore return to
\(B\subset U\), and the process repeats.

Every infinite branch converges to a zero: the original residual
is the cylinder scale times a uniformly bounded normalized
residual. Assign mass \(2^{-j}\) to each binary generation-\(j\)
cylinder. Its depth is at most \(jM\), and distinct such cylinders
are separated by at least
\((1-2\lambda)\lambda^{jM-1}\).
A ball of radius comparable to \(\lambda^{jM}\) meets a bounded
number of them. Choosing \(j\) with
\(\lambda^{(j+1)M}<r\leq\lambda^{jM}\) gives
\[
\nu(B(z,r))\leq C r^{\log2/(M\log(1/\lambda))}.
\]
The mass distribution principle~\cite{Mattila1995} completes the proof.
\end{proof}

\begin{lemma}\label{lem:good-slopes}
The eventual-periodicity conclusion of Lemma~\ref{lem:periodicity},
including vanishing errors, holds when the slope belongs to any of
\[
\left(\frac{89}{73},\frac{171}{137}\right),\
\left(\frac{101}{79},\frac{183}{143}\right),\
\left(\frac{89}{57},\frac{171}{89}\right),\
\left(\frac{143}{61},\frac{237}{101}\right),\
\left(\frac{137}{57},\frac{219}{89}\right).
\]
\end{lemma}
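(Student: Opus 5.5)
The plan is to rerun the proof of Lemma~\ref{lem:periodicity} one interval at a time. The only places where that proof used the band \eqref{eq:r-band} are a finite list of strict inequalities between affine expressions in $r$:
\[
7-5r>0,\qquad 9r-11>0,\qquad 19-15r>0,\qquad 2027-1624r>0 .
\]
These are exactly the "margins" that absorb the vanishing errors $\delta_n$. Every other step is formal bookkeeping that holds for any slope $r>1$. This includes the reflection $T\mapsto (r+1)/2-T$, the recurrence $T_{n+1}=3T_n-c_n$, and the binary-word argument showing that a sequence avoiding two forbidden triples is eventually constant or alternating.

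So for each listed interval I first fix $r$ in it and recompute the four exclusive domains $I_0,I_1,I_r,I_{r+1}$ of \eqref{eq:exclusive-domains}. The relative order of the values $0,1,r,r+1$, and of the endpoints $(r+1)/6$, $r/3$, $(r+3)/6$, $(r+1)/3$, changes with $r$. For $r$ near $1.22$ (the first two intervals) the order is as before. For $r\in(89/57,171/89)\approx(1.56,1.92)$ and for $r$ near $2.35$–$2.46$ some of these intervals change shape or become empty.

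Next, for each interval, I follow the chain of forced transitions exactly as in the existing proof. A late middle digit forces a specific successor, which yields a finite set of admissible blocks. I then exclude the mixed triples of blocks by comparing a lower bound for $T_n$, obtained by filling the tail with the cheapest blocks, against the upper end of the domain of the first digit. Each exclusion becomes an inequality $p-qr>0$ or $qr-p>0$ with integer coefficients. The interval endpoints $89/73$, $171/137$, and so on are presumably exactly the roots of the binding inequalities, which is how I would guess and check them.

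Since each inequality is affine in $r$, it suffices to verify it at the two endpoints of the interval with non-strict sign. Strictness in the open interval then follows, and by compactness of any closed subinterval the margins are bounded below. This is precisely what lets $\delta_n\to0$ be absorbed, as in the original lemma.

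The degenerate case where the tail uses only the extreme digits $0,r+1$ is handled by the analogue of the $0,(r+1),(r+1)$ estimate. That estimate required $9r-11>0$, which holds on all five intervals.

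The main obstacle is the larger slopes, $r>3/2$. There the combinatorics differ: for instance, after a digit $1$ we get $T_{n+1}\le r-1+3\delta_n$, and since $r-1$ can now exceed $(r+1)/6$, the successor of $1$ need no longer be forced to be $0$. For those intervals I would first enumerate, by the same linear-domain bookkeeping, which length-two and length-three digit words are compatible with the exclusive domains up to $o(1)$ errors. This produces a finite directed graph on digits. I would then show that every sufficiently late infinite path in this graph is eventually periodic, by checking that each strongly connected component is a single cycle. If a component were not a cycle, I would go to longer words, as with the block triples above, to forbid the branching, and would expect the listed endpoints to be where such an argument just barely succeeds.
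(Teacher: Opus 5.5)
There is a concrete step that fails: your plan to handle the first two bands by rerunning the proof of Lemma~\ref{lem:periodicity} with its four margins $7-5r$, $9r-11$, $19-15r$, $2027-1624r$.

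The second band $(101/79,183/143)\approx(1.2785,1.2797)$ is not near $1.22$. It lies entirely above $19/15\approx1.2667$ and above $2027/1624$, so two of those margins are negative on all of it. Concretely, after the pair $10$ one has $T_{n+2}=9T_n-3\in[3(r-1)/2,\,3(r-1)]$. On the whole second band this range meets $I_{r+1}=[(3r+1)/6,(r+1)/2]$, so the word $10(r+1)$ is feasible with zero error. The decomposition into the blocks $10$ and $r(r+1)$ therefore collapses.

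The first band strictly contains $(11/9,2027/1624)$ at both ends, since $89\cdot 9=801<803=11\cdot 73$ and $171\cdot1624=277704>277699=2027\cdot137$. On $(89/73,11/9]$ the words $100$ and $0(r+1)(r+1)$ are feasible. So your assertion that $9r-11>0$ on all five intervals is false, and your treatment of the $\{0,r+1\}$ tail breaks down there. On $[2027/1624,171/137)$ the low--high--high comparison has no positive margin.

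Checking the four inequalities at the band endpoints, as you propose, would therefore fail. The listed endpoints are not roots of any of them.

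The repair is to use your graph fallback on every band, not only for $r>3/2$; that is exactly the paper's proof. For all $1<r<3$ the domains in \eqref{eq:exclusive-domains} are nonempty and keep the same order: none changes shape or becomes empty, and $0<1<r<r+1$ throughout. Hence each word of fixed length has a closed feasible $r$-interval, cut out by affine inequalities after pulling the conditions back to the initial level.

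The paper's certificate works as follows:
\begin{itemize}
\item Vertices are words of length four, and edges are words of length five whose feasible interval meets the open band. This gives $96,80,160,92,112$ edges for the five bands.
\item An integer rank, nondecreasing along edges, with same-rank edges forming disjoint simple cycles, certifies that every infinite path eventually runs around a single cycle.
\item At a fixed $r$ in the band, each discarded word has a positive gap, so vanishing errors exclude it at late positions.
\end{itemize}

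Your criterion that every strongly connected component be a single cycle is equivalent to this rank certificate. However, your proposal neither fixes a word length at which it succeeds nor applies it on the bands where it is actually needed.
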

\begin{proof}
We give the finite criterion and its exact verification in
Appendix~\ref{app:certificates}. Label \(0,1,r,r+1\) by \(0,1,2,3\).
For each of the \(4^5\) words of length five, impose the five
closed exclusive-domain conditions from
\eqref{eq:exclusive-domains}, after pulling them back to the
initial level. Every endpoint is affine in \(r\), so simultaneous
feasibility is a finite list of rational linear inequalities in
\(r\). Keep a word if this feasible slope interval meets the
specified open band.

Make a directed graph whose vertices are words of length four
and whose edges are the retained words of length five. For the
five bands in the stated order, the numbers of edges are
\(96,80,160,92,112\); the numbers of recurrent cycles are
\(7,6,8,6,7\). The exact certificate assigns an integer rank to
each vertex, nondecreasing along every edge; within each rank
the recurrent edges form disjoint directed simple cycles.
An infinite path therefore eventually stays on a single cycle.

Fix a slope in the open band. Each discarded word has a strictly
positive feasibility gap at this slope. There are only finitely
many words, so the minimum of these gaps is positive.
Errors tending to zero cannot produce a discarded word at
arbitrarily late positions. The resulting address is eventually
a graph path, and hence eventually periodic.
\end{proof}

Call the interval in \eqref{eq:r-band} and the five intervals of
Lemma~\ref{lem:good-slopes} the admissible slope intervals.
Reciprocal slopes are treated by interchanging the coordinates.

\begin{lemma}\label{lem:irrational-tangent}
Suppose a \(C^1\) level curve in a Cantor rectangle contains a
nonempty compact perfect subset of \(C^2\).
If at one of its points the positive normalized derivative ratio
is irrational and lies in an admissible slope interval, then
the Cantor intersection of the curve has positive Hausdorff dimension.
\end{lemma}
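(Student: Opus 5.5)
Fix a point \(p=(a,b)\) of the perfect compact subset \(P\) at which the normalized derivative ratio \(r:=G_a(p)/G_b(p)\) is irrational and lies in an admissible slope interval. The plan is to produce strict returns near \(p\) and then invoke Lemma~\ref{lem:strict-returns} on a deep cylinder around \(p\).

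\textbf{Step 1: rescaling.} Let \(a_n,b_n\) be the digit tails of \(p\) and set \(T_n=(ra_n+b_n)/2\) and \(c_n=r\xi_n+\beta_n\), as in the proof of Theorem~\ref{thm:main}. Restricting to the depth-\(n\) cylinder through \(p\) and dividing by the cylinder scale (times \(2G_b(p)\)), the level equation becomes
\[
\frac{rs+t}{2}-T_n+e_n(s,t)=0 .
\]
By \(C^1\) continuity of \(G\), the Lipschitz constant of \(e_n\) tends to zero. We do not need an exact quadratic expansion: the uniform \(o(1)\) error is exactly what Lemmas~\ref{lem:periodicity} and~\ref{lem:good-slopes} allow.

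\textbf{Step 2: strict returns for the unperturbed linear map.} Take \(L(s,t)=(rs+t)/2\). For each level \(u\), consider the children whose intervals \(J_c\) contain \(u\). Call \(u\) \emph{branching} if some finite word admits two incomparable continuations whose normalized levels \(\Phi_w(u)\) land in the interior of the level range. Irrationality of \(r\) should imply that every level carried by an infinite path is branching within a bounded number of steps. Suppose instead that an infinite path eventually never branches. At each late step the level then lies, up to vanishing error, in the exclusive domain \(I_{c_n}\). By Lemma~\ref{lem:periodicity} or Lemma~\ref{lem:good-slopes}, the digit sequence is then eventually periodic, say with period block \(w\). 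A fixed point of \(\Phi_w\) for \(L\) is an affine combination of \(1\) and \(r\). On an eventually periodic path, the \(a\)- and \(b\)-tails are themselves eventually periodic, which forces \(r\) to be a ratio of rationals, contradicting irrationality.

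To obtain a \emph{uniform} compact set \(B\), I would argue by compactness. Define \(B\) as the closure of the set of levels \(T\) reachable from the initial level \(T_0\) along paths in \(P\)'s address tree. If no uniform return length \(M\) existed, a diagonal limit would produce an infinite path that is non-branching with vanishing errors. Such a path is eventually periodic by the admissible-slope lemmas, and we derive the same contradiction.

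\textbf{Step 3: conclusion.} Once every \(u\in B\) has two incomparable strict returns into \(\operatorname{int}B\), choose \(n\) so large that \(\operatorname{Lip}(e_n)\) is below the threshold of Lemma~\ref{lem:strict-returns} and \(T_n\) is close to \(B\). That lemma then gives a positive-dimensional zero set inside the depth-\(n\) cylinder. Since this set is a rescaled copy of part of the Cantor intersection of the curve, the curve's Cantor intersection has positive Hausdorff dimension.

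\textbf{Main obstacle.} The hardest step is Step 2: turning ``non-branching forces periodicity, which forces rationality'' into strict returns landing in the \emph{interior} of a compact set, uniformly over \(B\). The delicate point is the boundary cases, where a level sits exactly at an endpoint of some \(J_c\). I would handle these by enlarging \(B\) slightly to a closed neighbourhood. The margins supplied by Lemma~\ref{lem:good-slopes}'s positive feasibility gaps should keep the returns interior, but verifying this is where I expect the argument might need an extra case analysis.
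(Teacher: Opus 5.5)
\textbf{There is a genuine gap in Step~2, and Step~2 is the heart of the lemma.} You argue that an eventually non-branching path is eventually periodic, so its tails are rational, so \(r\) is rational. The last inference is false. A single point with rational coordinates on the line \(rs+t=2u\) imposes no condition on \(r\).

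Here is a concrete counterexample. Take any irrational \(r\) in the band \eqref{eq:r-band}. Since \(9/8<r<5/4\), the interior level \(3/8\) lies only in \(J_1\). Its successor \(3\cdot\frac38-1=\frac18\) lies only in \(J_0\), and that returns to \(3/8\). So the fibre of \(L_r\) over \(3/8\) is the single point \((0,3/4)\), with address \(1010\cdots\). It never branches, and no contradiction with irrationality arises.

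The coordinate-dependence \(r=y/(3x)\) that works for the circle is not available here. The lemma concerns a general \(C^1\) curve, and your limiting path lives in the linear model with \(r\) frozen.

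The same example undermines your choice of \(B\) as the closure of levels reachable along the address tree of \(P\). Such a set may contain levels like \(3/8\), from which no return exists at all. So the hypothesis of Lemma~\ref{lem:strict-returns} is never established. The boundary-endpoint issue you flag is not the real obstacle.

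\textbf{The missing idea.} Irrationality can only be contradicted by \emph{two} distinct rational points on the \emph{same} level line, since \(r(s_1-s_2)=t_2-t_1\) with \(s_1\ne s_2\). The paper builds the proof around this.
\begin{enumerate}
\item Take \(B=B_r\), the union of the three overlaps of adjacent intervals \(J_c\). For \(1<r<3\) the child intervals cover \([0,(r+1)/2]\), so \(L_r(C^2)=[0,(r+1)/2]\) and every level of \(B_r\) has at least two fibre points.
\item An isolated fibre point eventually has a unique continuing child. Lemmas~\ref{lem:periodicity} and~\ref{lem:good-slopes} then make it rational.
\item A countable compact fibre has an isolated point. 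A countable fibre with two points would therefore give two rational points, in disjoint cylinders, on one level, forcing \(r\in\mathbb Q\). Hence fibres are singletons or uncountable, and every fibre over \(B_r\) is uncountable.
\item An uncountable fibre has a descendant with two uncountable children. That descendant's normalised level lies in \(\operatorname{int}B_r\), because at a switch endpoint one child is an extreme singleton.
\item Repeating this inside each of the two children yields two incomparable strict returns into \(\operatorname{int}B_r\) from every level of \(B_r\). Compactness makes them uniform.
\end{enumerate}

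Your Step~3 also needs an argument it currently hides: that the normalised levels along \(p\)'s address approach \(B_r\). This is where perfectness of \(P\) is used. Since \(p\) is not isolated, there are arbitrarily deep squares on its address in which two children meet the curve. At such depths the normalised level is within \(\|e_n\|_\infty\) of two child intervals. Nonadjacent child intervals are \((r-1)/6\) apart, so the level is within \(\|e_n\|_\infty\) of \(B_r\).
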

\begin{proof}
Fix that ratio \(r\), and write \(L_r(s,t)=(rs+t)/2\).
Its four child image intervals are
\[
\left[\frac c3,\frac c3+\frac{r+1}{6}\right],
\qquad c\in\{0,1,r,r+1\}.
\]
For \(1<r<3\) they cover \([0,(r+1)/2]\); iterating this
coverage proves \(L_r(C^2)=[0,(r+1)/2]\).
An isolated point of any affine fibre eventually has a unique
continuing child. Its tails consequently lie in the closed
exclusive domains. Lemmas~\ref{lem:periodicity} and
\ref{lem:good-slopes} make its coordinates rational.

A countable nonempty compact metric space has an isolated point:
otherwise repeated choices of two disjoint neighborhoods give
a binary nested construction. If an affine fibre were countable
with two distinct points, two disjoint Cantor cylinders would
therefore each contain an isolated point of that fibre.
Their coordinates would be rational, and equality of levels
would make \(r\) rational. Thus every nonempty affine fibre is
either a singleton or uncountable. Every level in the compact
switch set
\[
B_r=
\left[\frac13,\frac{r+1}{6}\right]\cup
\left[\frac r3,\frac{r+3}{6}\right]\cup
\left[\frac{r+1}{3},\frac{3r+1}{6}\right]
\]
has at least two points, hence uncountably many.

In an uncountable fibre, some descendant has two uncountable
children. Otherwise the fibre is the union of one limiting point
and countably many countable offshoots. A node with two
uncountable children has normalized level in the interior of
\(B_r\): at a switch endpoint one child is an extreme singleton.
Apply this argument again within each of the two children.
It produces two incomparable strict return words from every
level of \(B_r\). Compactness supplies finitely many word pairs,
a maximum length, and a common positive return margin.
Lemma~\ref{lem:strict-returns} makes these returns robust under
a sufficiently small signed Lipschitz perturbation of \(L_r\).

It remains to find a suitable normalized cylinder on the given
curve. In the depth-\(j\) square containing the specified perfect
set point, divide the defining function by
\(2\,3^{-j}F_t\) evaluated at that point. The normalized equation
is \(L_r+e_j-u_j=0\), where \(e_j(0,0)=0\) and
\(\operatorname{Lip}(e_j)\to0\), by continuity of the gradient.
There are arbitrarily deep branching squares on its address;
otherwise that point would be isolated in the perfect set.
At these depths two child image intervals come within
\(\|e_j\|_\infty\) of \(u_j\). Nonadjacent intervals are separated
by \((r-1)/6\), so eventually
\(\operatorname{dist}(u_j,B_r)\leq\|e_j\|_\infty\).
A sufficiently deep branching square lies in the neighborhood
where the robust returns apply. Their binary construction has
positive Hausdorff dimension.
\end{proof}

In particular, the perfect subset in Theorem~\ref{thm:main}
has positive dimension. Indeed, the normalized tangent ratio
varies strictly monotonically along the positive unit-circle arc,
so only countably many of its points have rational ratio.
The perfect set is uncountable, and all its ratios lie in
\eqref{eq:r-band}.

\paragraph{A quantitative lower bound for the unit circle.}

\begin{theorem}\label{thm:circle-quantitative}
For the unit circle in the classical Cantor dust,
\[
\frac{\log(57/50)}{\log3}\leq\dim_{\mathrm H}(E_{2,1/3})
                  \leq\frac{2\log2}{\log3}-1 .
\]
\end{theorem}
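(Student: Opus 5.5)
The two inequalities will be proved separately. The upper bound is a standard covering argument. The lower bound comes from an explicit weighted branching construction inside the source cylinder of Proposition~\ref{prop:source}, made robust by the linearisation \eqref{eq:taylor}.

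\emph{Upper bound.} Fix a depth $n$. Of the $4^n$ retained squares of side $3^{-n}$, the unit circle is a rectifiable curve of length $\pi/2$ in the positive quadrant, so it meets at most $O(3^n)$ of them. Each such square has diameter $\sqrt2\,3^{-n}$, and only those meeting the circle are needed to cover $E$. A cleaner count gives a sharper exponent. Transversality is uniform away from the axes, so a single square of side $3^{-n}$ meeting the curve contains at most $O(4^{m}/3^{m})$ retained subsquares of side $3^{-n-m}$ that meet it. Iterating this over $m$ gives $N_k(E)\lesssim 4^k3^{-k}$ squares at scale $3^{-k}$, hence $\overline{\dim}_{\mathrm B}E\le \log(4/3)/\log3=2\log2/\log3-1$. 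This bounds the Hausdorff dimension. Near the axis points the same count applies after interchanging roles, or one uses that $E$ is a countable union of pieces, each handled in a cylinder where one partial derivative dominates.

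\emph{Lower bound.} Work with the normalized linear problem $L_r(s,t)=(rs+t)/2$ at levels $u$, with $r$ in the band \eqref{eq:r-band}. I will construct a compact set $B$ of levels, a finite family of return words $w$ with lengths $|w|$, and weights given by the lengths, such that for every $u\in B$ the returning words $w$ satisfy $\sum_w 3^{-s|w|}\ge1$ with $s=\log(57/50)/\log3$. The returns must send $u$ into $\operatorname{int}B$ with a uniform margin, uniformly for all $r$ in a subinterval containing the slopes realised in the source cylinder. The natural choice of $B$ is the switch set $B_r$ from the proof of Lemma~\ref{lem:irrational-tangent}, or a union of subintervals of it, and the return words are the branching words of small length. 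The ``exact weighted-return certificate'' is then a finite check: partition $B$ into finitely many intervals, and for each interval list words whose normalized images $\Phi_w$ of that interval land strictly inside $B$. Verify the inequality $\sum 3^{-s|w|}\ge1$ in rational arithmetic; the number $57/50$ should emerge as the root of the worst-case return polynomial, for instance a cell with returns at lengths giving $\sum_w x^{|w|}=1$ at $x=50/57$ after rescaling. With this certificate I will rerun the proof of Lemma~\ref{lem:strict-returns} with weights $3^{-s|w|}$ in place of $2^{-j}$. The perturbation $e$ from \eqref{eq:taylor} has Lipschitz constant of order $3^{-n}$, and the slope drifts along the curve, but both are absorbed by the uniform margin at sufficiently deep branching squares. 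These squares exist because $S$ is perfect, by Theorem~\ref{thm:main}, and the argument at the end of Lemma~\ref{lem:irrational-tangent} places their levels near $B_r$. The mass distribution principle, using the separation $(1-2\lambda)\lambda^{k}$ of cylinders, then gives $\dim_{\mathrm H}\ge s$.

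\emph{Main obstacle.} The hard step is producing the certificate so that the weighted sum stays at least $1$ uniformly in both the level and the slope $r$ across the whole band. Without this uniformity, the drift of the normal ratio along the arc would break the fixed-$r$ argument. I would first try a fixed slope interval slightly larger than $[704/567,6337/5094]$ from \eqref{eq:normal-band}, treating $r$ as a variable in the affine feasibility inequalities as in Lemma~\ref{lem:good-slopes}. If one level partition cannot cover the band, I would subdivide the band finitely and take the minimum exponent over the pieces.
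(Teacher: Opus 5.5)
\textbf{Upper bound.} Your covering argument has a gap. It rests on the claim that, because the arc is uniformly transverse to the axes, every square of side $3^{-n}$ meeting it contains only $O(4^m3^{-m})$ retained subsquares of side $3^{-n-m}$ meeting it. Transversality to the axes cannot give this. The same reasoning applies verbatim to the line $x+y=1$. Yet since $1-C=C$, this line contains $\{(x,1-x):x\in C\}$ and meets all $2^m$ retained squares $I\times(1-I)$ of level $m$, which is exponentially more than $(4/3)^m$.

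The count is governed by how the projection of the natural measure on $C\times C$ along the tangent direction distributes mass, and this degenerates in resonant directions. The slopes $-1$ and $-3^{\pm k}$, all of which are tangent slopes of the positive arc, give projections of dimension $3\log2/(2\log3)<1$. Every rational slope produces exact overlaps. Even in good directions one only gets Frostman exponents $t<1$, with direction-dependent constants. So neither your uniform count nor a box-dimension bound for all of $E$ follows.

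The missing input is the content of Theorem~\ref{thm:dimension-upper}, which the paper simply cites here:
\begin{itemize}
\item Shmerkin's theorem gives the strip count $B_{a,t}4^j3^{-jt}$ for every $t<1$ in directions with exponential separation.
\item The failing directions form a zero-dimensional set.
\item Because $f''<0$, the slope map along the arc is locally bi-Lipschitz, so only a zero-dimensional set of curve points has exceptional tangents.
\end{itemize}
Local box bounds near the remaining points, together with countable stability, then give the Hausdorff bound.

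\textbf{Lower bound.} Your architecture (strict returns, weighted branching, mass distribution) matches the paper's. However, the quantitative content lies entirely in the certificate, which you neither supply nor set up in a form that could produce $57/50$.

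Slope drift is not the real obstacle, and certifying uniformly over a slope band is infeasible. The map $\Phi_w(u)=3^{|w|}(u-L_r(z_w))$ has $r$-derivative of size up to $3^{|w|}/2$. With words of length $33$ and margins near $10^{-5}$, you would need slope cells of width of order $10^{-21}$, across a band of width about $2\cdot10^{-3}$.

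The paper freezes the slope instead. It takes the explicit cylinder $x=(P+b)/3^{59}$, $y=(Q+a)/3^{60}$ and writes the circle equation exactly as $(ra+b)/2+e(a,b)-T=0$, with the rational slope $r=Q/(9P)$ and $e(a,b)=(a^2+9b^2)/(36P)$. It then uses that the bound $\operatorname{Lip}(e)\le5/(9P)$ is unchanged by every descendant renormalization, as in Lemma~\ref{lem:strict-returns}. One certificate at one rational slope, absorbing $3\operatorname{Lip}(e)$ at each digit, therefore controls all descendants. The initial level $T$ is checked exactly to lie in the middle component of $B_r$, so neither perfectness of $S$ nor branching squares are needed.

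The certificate is also weighted by components, not only by lengths. The three components of $B_r$ get weights $1,2,1$, and the verified inequality is $\sum_w(57/50)^{-|w|}\operatorname{weight}(i(w))\ge\frac{20001}{20000}\operatorname{weight}(i)$. This gives every depth-$j$ cylinder mass at most $2(57/50)^{33}(57/50)^{-j}$: the factor $2$ is the weight ratio and $33$ bounds the gap between returns. Your equal-weight condition is a more restrictive special case that is not what is verified. Nor is $57/50$ the root of a return polynomial; it is simply the rate at which the weighted inequality of Appendix~\ref{app:certificates} was checked.
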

\begin{proof}
The upper bound is Theorem~\ref{thm:dimension-upper}.
For the lower bound use the Cantor cylinder
\[
x=\frac{P+b}{3^{59}},\qquad y=\frac{Q+a}{3^{60}},
\]
where
\[
\begin{aligned}
P&=3663407072939048637165799292,\\
Q&=40941731002436775062259738416 .
\end{aligned}
\]
The padded ternary expansions of \(P,Q\) contain only \(0,2\).
The normalized circle equation is
\[
\frac{ra+b}{2}+e(a,b)-T=0,\qquad
r=\frac Q{9P},\quad
e(a,b)=\frac{a^2+9b^2}{36P},\quad
T=\frac{3^{120}-Q^2-9P^2}{36P}.
\]
Here \(e(0,0)=0\) and
\(\operatorname{Lip}(e)\leq5/(9P)\).
Subtracting the lower-corner remainder and dividing by the
side length in any descendant preserves this bound.

The exact weighted-return certificate of
Appendix~\ref{app:certificates} covers all three components of
\(B_r\), assigns them weights \(1,2,1\), and uses return words
of length at most \(33\). The initial \(T\) lies in the middle
component. For every current level in component \(i\), its
incomparable return words \(w\), of lengths \(|w|\) and
destination components \(i(w)\), satisfy
\[
\sum_w(57/50)^{-|w|}\,\operatorname{weight}(i(w))
 \ \geq\frac{20001}{20000}\operatorname{weight}(i).
\]
All endpoints and all error enclosures in this assertion are
rational. In particular, the grid calculation includes
\(3\operatorname{Lip}(e)\) at every digit; it therefore verifies
the actual nonlinear returns, uniformly at every descendant.

Assign return probability proportional to
\((57/50)^{-|w|}\operatorname{weight}(i(w))\).
The preceding inequality shows that along a sequence of returns
of total ordinary depth \(d\), its mass is at most
\((57/50)^{-d}\) times the ratio of terminal and initial weights.
This ratio is at most two. At an intermediate ordinary depth,
stopping just before the next return loses at most \(33\) digits,
so every ordinary depth-\(j\) cylinder has mass at most
\(2(57/50)^{33}(57/50)^{-j}\).
The antichain property makes this assertion valid for all branches
simultaneously. A ball of radius comparable to \(3^{-j}\) meets
a bounded number of Cantor squares. The resulting measure is
therefore Frostman with exponent \(\log(57/50)/\log3\).
The normalized levels and errors remain bounded, so their nested
limits satisfy the exact circle equation, as in
Lemma~\ref{lem:strict-returns}.
\end{proof}

%%%%%%%%%%%%%%%%%%%%%%%%%%%%%%%%%%%%%%%%%%%%%%%5
\section{Varying the exponent \texorpdfstring{\(\alpha\)}{alpha}}\label{sec:first}

We first prove positive dimension at the middle-third parameter for
all real exponents, and then study how the first transition depends
on the arithmetic of the exponent. The principal conclusions are
\[
\begin{cases}
\dim_{\mathrm H}(E_{\alpha,1/3})>0,
  & \alpha\in\mathbb R,\ \alpha>1,\\[4pt]
\lambda_{\min}(\alpha)=1/4,
  & \alpha\notin\mathbb Q,\ \alpha>1,\\[4pt]
0\leq\lambda_{\min}(\alpha)-1/4
  \leq\dfrac{(3/2)^\alpha\log4}{2\alpha b},
  & \alpha=a/b\in\mathbb Q\setminus\mathbb Z,\ \alpha\geq2,
\end{cases}
\]
where \(a/b\) is in lowest terms. The first assertion concerns the fixed
middle-third Cantor set, whereas the other two assertions describe the first
Hausdorff phase transition as the contraction ratio varies. The rational
estimate also holds for integers; it does not by itself imply a strict
gap above \(1/4\).

\subsection{All real exponents at the middle-third parameter}\label{subsec:classical}

\begin{theorem}\label{thm:integer-powers}\label{thm:real-powers}
For every real \(\alpha>1\),
\[
\dim_{\mathrm H}\{(x,y)\in C\times C:x^\alpha+y^\alpha=1\}>0.
\]
\end{theorem}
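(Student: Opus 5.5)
We will derive the theorem from Lemma~\ref{lem:irrational-tangent}, applied to the level curve \(G(a,b)=0\) in a suitable Cantor rectangle, where \(G\) is the rescaled form of \(x^\alpha+y^\alpha-1\). The two hypotheses to check are these: the curve contains a nonempty compact perfect subset of \(C^2\), and at one of its points the normalized derivative ratio is irrational and lies in an admissible slope interval. For the ratio condition, note that along the arc \(x^\alpha+y^\alpha=1\), \(0<x,y<1\), the slope of the curve is \(-(x/y)^{\alpha-1}\). Hence the normalized ratio in a rectangle with scales \(3^{-m}\) and \(3^{-m-k}\) is \(3^{\pm k}(x/y)^{\alpha-1}\). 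This quantity is strictly monotone along the arc, so only countably many points of an uncountable set can have a rational ratio. It therefore suffices to produce an \emph{uncountable} compact subset of the curve inside a cylinder pair on which the ratio stays in one admissible interval. The interval \((11/9,2027/1624)\) from \eqref{eq:r-band}, the five intervals of Lemma~\ref{lem:good-slopes}, and their reciprocals obtained by swapping coordinates together cover a decent portion of \((1,3)\).

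\textbf{Step 1: choosing the cylinder.} For each \(\alpha>1\), we choose words \(u,v\) with digits \(0,2\) and lengths \(m,m+k\) (with \(k\in\{0,1\}\), or swapped) so that three conditions hold. First, the corner values of \(G\) have opposite signs. Second, the monotonicity and overlap conditions of Lemma~\ref{lem:interval-cover} hold uniformly; these require \(G_a,G_b>0\), \(\max G_b<\min G_a\) and \(\max G_a<3\min G_b\), i.e.\ the normalized ratio lies strictly inside \((1,3)\) with small variation. Third, the ratio range lies inside an admissible interval. The second and third conditions become easy at large depth, since the ratio varies by \(O(3^{-m})\) over the rectangle. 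The first condition is the real constraint: we need a crossing point of the curve with \(C\times C\) whose local ratio lies in the chosen interval. To get it, we take a point \((x_0,y_0)\in E_{\alpha,1/3}\) off the axes where the slope condition holds, and then take deep cylinders around it.

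\textbf{Step 2: a perfect subset.} Given Step 1, we repeat the argument of Theorem~\ref{thm:main}. An isolated point of \(S\) forces its digit sequence into the exclusive domains up to errors \(\eta_n\to0\), where now \(\eta_n\) comes from the Taylor remainder of \(G\) and is \(O(3^{-n})\) by \(C^2\) smoothness. Lemma~\ref{lem:periodicity} or Lemma~\ref{lem:good-slopes} then gives eventual periodicity, so both coordinates are rational. Unlike the case \(\alpha=2\), we cannot rely on Lemma~\ref{lem:rational}, and rational points of \(x^\alpha+y^\alpha=1\) are not controlled for real \(\alpha\). Instead, we bypass the issue by proving directly that every fibre is perfect, exactly as in the proof of Lemma~\ref{lem:irrational-tangent}. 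There, two isolated rational points on the same linearized level force \(r\) to be rational, and we can arrange \(r\) irrational by the choice of base point. So we apply Lemma~\ref{lem:irrational-tangent} in its robust form: we locate a deep cylinder where the normalized level \(u_j\) is within \(o(1)\) of \(B_r\) and the Lipschitz error \(e_j\to0\), and then Lemma~\ref{lem:strict-returns} gives positive dimension. This route only requires a point of the curve in \(C^2\) at which the address branches infinitely often, or alternatively a point where the normalized level lies in \(\operatorname{int}B_r\) at a deep scale.

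\textbf{Main obstacle.} The hard part is the existence of the initial good point for every real \(\alpha>1\): a crossing in \(C\times C\) whose ratio \(3^{\pm k}(x/y)^{\alpha-1}\) is admissible, with normalized level in \(\operatorname{int}B_r\). We would handle it as follows. By Lemma~\ref{lem:interval-cover}, \(G(C_u\times C_v)\) is a full interval whenever the ratio lies in \((1,3)\). Starting from the axis point \((1,0)\), or from a neighbourhood of \(x=y=2^{-1/\alpha}\), we move along the curve and pick cylinders \(u,v\) in \(C\) whose product meets the curve. We then vary \(k\) and the position along the arc, which changes \((x/y)^{\alpha-1}\) continuously over a long range, so that the ratio lands in an admissible interval. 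If this direct approach runs into trouble for some \(\alpha\), the fallback is a compactness argument in \(\alpha\): for each \(\alpha_0\), a cylinder with strict margins works for all \(\alpha\) in a neighbourhood of \(\alpha_0\). The large-\(\alpha\) and \(\alpha\to1^+\) regimes would be handled separately by rescaling near the corner points.
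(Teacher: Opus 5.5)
There is a genuine gap, and it sits at the step you call the main obstacle. You never produce a crossing of the curve with \(C\times C\) inside a Cantor rectangle that satisfies Lemma~\ref{lem:interval-cover} and has an admissible ratio. Step~1 assumes a point \((x_0,y_0)\in E_{\alpha,1/3}\) off the axes, which is essentially what has to be proved. Your remedy of moving along the arc does not work, because a fixed curve offers no continuous parameter: whether the arc meets a retained rectangle with strictly bracketing corner values is a discrete, arithmetic condition.

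The missing idea is the near-axis rescaling. For \(x=3^{-k}u\) and \(y=1-3^{-m}v\), the residual multiplied by \(3^{\alpha k}\) converges in \(C^1\) to \(u^\alpha-\alpha3^{\alpha k-m}v\).
\begin{itemize}
\item For irrational \(\alpha\), \(m-\alpha k\) is dense modulo one, which supplies a free coefficient. The paper uses it to bring the normalized equation \(C^1\)-close to \(rs+t-\theta=0\). Here \(r=\alpha(v_0/u_0)3^h\) is irrational and admissible, and \(\theta\) is a level with two strict returns. Lemma~\ref{lem:strict-returns} then applies directly, with no perfect set involved.
\item For \(\alpha=a/b\), the phase is confined to \(b^{-1}\mathbb Z\), so a strict logarithmic source must be exhibited for each exponent. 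The paper does this with a uniform sixteen-row construction for \(b\geq110\) and exact computer verification for the remaining denominators and large exponents. It adds direct-return certificates for finitely many exceptional exponents, and uses Theorem~\ref{thm:circle-quantitative} for \(\alpha=2\).
\end{itemize}
Your compactness fallback presupposes a strict source at every \(\alpha_0\), which is exactly the missing input. At best it turns that into a finite computation, and \((1,\infty)\) is not compact in any case.

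Step~2 also fails. The argument that two isolated points of one fibre force \(r\) to be rational is special to affine fibres of \(L_r\): there, equal levels at rational points give a linear relation. On the nonlinear curve, Lemmas~\ref{lem:periodicity} and~\ref{lem:good-slopes} only show that an isolated point has rational coordinates, and such points must then be excluded by arithmetic.
\begin{itemize}
\item For rational \(\alpha\neq2\), the paper excludes them by a Galois-conjugate and valuation argument reducing to Fermat's Last Theorem. This ingredient is absent from your proposal.
\item For irrational \(\alpha\), such points can genuinely occur. Since \(3/4=0.\overline{20}_3\in C\), the point \((3/4,3/4)\) lies in \(E_{\alpha,1/3}\) for the irrational exponent \(\alpha=\log2/\log(4/3)\). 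This is why the irrational case must bypass perfectness altogether.
\end{itemize}
Without a source and without this exclusion, you have no uncountable subset of the curve. Hence you have no non-isolated point with irrational ratio to feed into Lemma~\ref{lem:irrational-tangent}.
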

\begin{proof}
The proof for irrational exponents is analytic. For rational exponents,
we combine a logarithmic source criterion with exact finite covers and
strict-return certificates. Appendix~\ref{app:real-certificates} gives
the verification predicates and the accompanying certificate files.

\emph{Irrational exponents.}
For \(1/3\leq q\leq3\), the four first-level intervals of
\(C-qC\) cover \([-q,1]\). Iteration gives \(C-qC=[-q,1]\), and hence
\[
\frac{(2+C)/3}{(2+C)/3}=[2/3,3/2].
\]
Indeed, \((2+s)/(2+t)=q\) is equivalent to
\(s-qt=2(q-1)\). Quotients of positive finite ternary left endpoints
in \((2+C)/3\) are therefore dense in this interval.

Choose an integer \(h\) such that \(\alpha3^h\in[1,3)\).
The two open intervals
\[
\left(\frac{41}{50},\frac{369}{200}\right),\qquad
\left(\frac{200}{123},\frac{150}{41}\right)
\]
cover \([1,3]\). The first permits multiplication by a quotient
in \((2/3,3/2)\) to reach \(123/100\); the second permits reaching
\(100/123\) after replacing \(h\) by \(h-1\).
By density, we may choose finite ternary left endpoints
\(0<u_0,v_0<1\) for which
\[
r:=\alpha\frac{v_0}{u_0}3^h
\]
belongs to \((11/9,2027/1624)\) or its reciprocal.
It is irrational because \(\alpha\) is irrational.
The affine part of the proof of Lemma~\ref{lem:irrational-tangent},
with coordinates interchanged if necessary, supplies a compact set
of levels of \(rs+t\), each having two incomparable strict returns.
Fix one such level \(\theta\).

Take large integers \(i,j\) with \(i-j=h\), beyond the terminating
lengths of \(v_0,u_0\), respectively, and set
\[
u=u_0+3^{-j}s,\qquad v=v_0+3^{-i}(1-t),\qquad
A:=\frac{u_0^\alpha}{v_0+3^{-i}(1-\theta)}.
\]
These are genuine Cantor cylinders, and \(A>0\) for large \(i\).
The exact identity
\[
\frac{u^\alpha-Av}{A3^{-i}}=rs+t-\theta+e(s),\qquad e(0)=0,
\]
has
\[
e'(s)=\frac{\alpha3^{i-j}(u_0+3^{-j}s)^{\alpha-1}}A-r.
\]
As \(i,j\to\infty\) with \(i-j=h\), this derivative tends uniformly
to zero. Fix a sufficiently deep pair of cylinders so that
Lemma~\ref{lem:strict-returns} applies, with room for a further
small \(C^1\) perturbation.

Irrational rotation provides positive integers \(k,m\to\infty\)
such that \(m-\alpha k\to\log_3(\alpha/A)\). Taylor's theorem gives
\[
3^{\alpha k}\bigl[(3^{-k}u)^\alpha+(1-3^{-m}v)^\alpha-1\bigr]
\longrightarrow u^\alpha-Av
\quad\text{in }C^1
\]
on the fixed rectangle. After division by \(A3^{-i}\), strict
returns therefore persist for sufficiently large \(k,m\).
Both \(3^{-k}u\) and \(1-3^{-m}v\) belong to \(C\), and the
coordinate map is bi-Lipschitz. This proves the irrational case.

\emph{Rational exponents and strict logarithmic sources.}
Write \(\alpha=a/b>1\) in lowest terms. The case \(\alpha=2\) follows from
Theorem~\ref{thm:circle-quantitative}. For every other such exponent,
there are no solutions with both coordinates positive and rational.
To see this, suppose otherwise. In a normal closure, every conjugate
of \(x^{a/b}\) is \(x^{a/b}\) times a \(b\)-th root of unity, and
likewise for \(y^{a/b}\). The conjugate equation and equality in the
triangle inequality force both conjugates to remain positive real.
Thus \(x^{a/b},y^{a/b}\) are rational. Prime valuations and
\(\gcd(a,b)=1\) imply \(x=u^b,y=v^b\) for positive rationals
\(u,v\). Since \(a\geq3\), the equation \(u^a+v^a=1\) contradicts
Fermat's Last Theorem~\cite{Wiles1995,TaylorWiles1995}.

Let \(u=(P+s)/3^p\), \(v=(Q+t)/3^q\), where \(P,Q>0\) have
padded ternary digits in \(\{0,2\}\). A strict logarithmic source
consists of \(d\in b^{-1}\mathbb Z\) satisfying
\begin{align*}
d\log3+\alpha\log(P/3^p)-\log \alpha-\log((Q+1)/3^q)&<0,\\
d\log3+\alpha\log((P+1)/3^p)-\log \alpha-\log(Q/3^q)&>0,
\end{align*}
with
\[
\left[\frac{\alpha Q}{P+1},\frac{\alpha(Q+1)}P\right]
\]
strictly inside an admissible slope interval or its reciprocal.
If \(d=j/b\), choose arbitrarily large integers \(k\) satisfying
\(ak+j\equiv0\pmod b\). Then \(\alpha k+d\) is an integer and
\[
x=3^{-k}u,\qquad y=1-3^{-\alpha k-d}v
\]
are Cantor cylinders. The exact logarithmic residual
\[
d\log3+\alpha\log u-
\log\!\left(3^{\alpha k+d}[1-(1-3^{-\alpha k-d}v)^\alpha]\right)
\]
converges in \(C^1\) to \(d\log3+\alpha\log u-\log \alpha-\log v\).
For real powers the required identity is
\[
\frac{1-(1-z)^\alpha}{z}=\alpha\int_0^1(1-tz)^{\alpha-1}\,dt.
\]
Thus the strict corner and slope inequalities persist. Reflecting
the \(v\) coordinate makes both partial derivatives positive;
Lemma~\ref{lem:interval-cover} gives a nonempty compact fibre.
If it had an isolated point, the vanishing-error digital argument
of Lemmas~\ref{lem:periodicity} and~\ref{lem:good-slopes}, as in
Theorem~\ref{thm:main}, would make both coordinates rational.
The preceding obstruction excludes this. The fibre is perfect,
and its tangent ratio is strictly monotone along the Fermat arc.
Lemma~\ref{lem:irrational-tangent} gives positive dimension.

\emph{Coverage of the rational exponents.}
The interval of permitted \(d\) has length
\[
\frac{\alpha\log(1+1/P)+\log(1+1/Q)}{\log3}.
\]
For denominators \(b\geq110\), a uniform construction makes this
length greater than \(1/110\). Write \(\alpha=3^m t\), \(1\leq t<3\).
The sixteen rows \((P,Q,\ell,u)\) in
Appendix~\ref{app:real-certificates} satisfy the following exact
open-cover condition:
\[
[1,3]\subset\bigcup
\left(\max\left\{\frac{\ell(P+1)}Q,
\left(\frac{11}{1100}-\frac1{Q+1}\right)(P+1)\right\},
\frac{uP}{Q+1}\right).
\]
Replace \(P\) by \(3^mP\), appending zero ternary digits.
The derivative interval stays strictly in \((\ell,u)\), and
\[
\alpha\log\left(1+\frac1{3^mP}\right)
+\log\left(1+\frac1Q\right)
>\frac{t}{P+1}+\frac1{Q+1}>\frac{11}{1100}.
\]
Since \(\log3<11/10\), the permitted interval contains a point of
\(b^{-1}\mathbb Z\) whenever \(b\geq110\).

Integer exponents are covered by the certificates and nine direct
returns detailed in Appendix~\ref{app:integer-certificates}.
For \(2\leq b\leq109\), exact verification gives the following
exhaustive decomposition:
\[
\begin{array}{c|l}
\text{exponent range}&\text{coverage}\\ \hline
1<a/b<2187&7{,}738{,}063\text{ width sources},\\
 &295{,}480\text{ individual sources, seven direct returns}\\
3^7\leq \alpha\leq3^8&650\text{ closed parameter cells}\\
3^8\leq \alpha\leq3^9&508\text{ cells}\\
3^9\leq \alpha\leq3^{10}&470\text{ cells}\\
3^{10}\leq \alpha\leq3^{11}&552\text{ cells}\\
3^{11}\leq \alpha\leq3^{12}&547\text{ cells}\\
\alpha\geq3^{12}&\text{continuous covers and uniform tail}.
\end{array}
\]
There are exactly \(8{,}033{,}550\) reduced fractions in the first
range. The five closed covers verify every denominator separately
and have no gaps at their joins. The continuous covers for
\(\alpha\geq3^{12}\), described in Appendix~\ref{app:integer-certificates},
cover every real normalized exponent \(t\), with integer \(d\).
For rational \(\alpha=a/b\), taking \(k\) divisible by \(b\) makes the
same transfer valid. Thus their use does not require integrality
of \(\alpha\).

The seven remaining exponents have direct-return certificates:
\[
\begin{array}{c|rrrrrrr}
\alpha&24/23&19/18&13/12&8/7&8/3&11/4&53/2\\ \hline
M&19&17&17&15&17&17&16.
\end{array}
\]
Each certificate specifies a genuine Cantor rectangle, a rational
slope \(r\), an initial level \(T\in B_r\), and a normalization
\((rs+t)/2-T+H(s,t)\), with \(H(0,0)=0\).
It verifies two incomparable returns from every point of \(B_r\),
of length at most \(M\), with margin \(\eta\), and
\[
\sup_{[0,1]^2}(|\partial_sH|+|\partial_tH|)
<\frac{\eta}{2\,3^M}.
\]
Here \(\eta=10^{-7}\) for \(8/3\) and \(\eta=10^{-5}\) otherwise.
The exact recursion and algebraic error bounds are given in
Appendix~\ref{app:real-certificates}. Lemma~\ref{lem:strict-returns}
then gives the lower bound \(\log2/(M\log3)>0\) in each case.
All real exponents \(\alpha>1\) have now been covered.
\end{proof}

For a concrete instance of the direct-return construction, when
\(\alpha=8/3\) take
\[
x=\frac{1349417288366350224+s}{3^{38}},\qquad
y=\frac{5541340632044816+t}{3^{35}},\qquad s,t\in C.
\]
The prefixes contain only ternary digits \(0,2\). Divide
\(x^{8/3}+y^{8/3}-1\) by twice its \(t\)-derivative at \((0,0)\).
For the rational reference slope
\[
r=\frac{5621659580810688002470561}{3884026733719813895964984},
\]
the normalized level lies strictly in the middle component of
\(B_r\). Exact cube-root enclosures give
\[
\sup(|\partial_sH|+|\partial_tH|)
<1.512787\cdot10^{-16}<\frac{10^{-7}}{2\,3^{17}}.
\]
The return certificate covers all three components in at most
seventeen steps, so
\(\dim_{\mathrm H}(E_{8/3,1/3})\geq\log2/(17\log3)\).
The theorem asserts positivity for each exponent; it does not
assert a lower bound uniform in \(\alpha\) or determine the exact dimension.

\subsection{Irrational exponents}\label{subsec:irrational}\label{subsec:universal}

For irrational exponents, the first Hausdorff phase transition occurs
at the universal lower bound \(1/4\). We begin with the dimension
estimate that gives this lower bound for every real exponent. This requires
control of exceptional tangents, rather than an almost-everywhere
slicing statement. We use the following consequence of Shmerkin's
theorem \cite[Theorem~6.2]{Shmerkin2019}: a homogeneous self-similar
probability on the line with four equal weights and exponential
separation has, for every
\[
0<t<\min\{\log4/\log(1/\lambda),1\},
\]
a bound \(\nu([z-r,z+r])\leq A_t r^t\), uniform in \(z,r\).
Exponential separation means that distinct level-\(j\) translations
are at least \(\rho^j\) apart for some \(\rho>0\) and infinitely many
\(j\). The constant \(A_t\) need not be uniform over the systems.

\begin{theorem}\label{thm:dimension-upper}
For every real \(\alpha>1\) and \(0<\lambda\leq1/2\),
\[
\dim_{\mathrm H}(E_{\alpha,\lambda})\leq
\max\left\{\frac{2\log2}{\log(1/\lambda)}-1,0\right\}.
\]
\end{theorem}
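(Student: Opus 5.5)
Our plan is a covering argument: count the level-\(n\) Cantor squares that meet the curve \(\Gamma_\alpha:=\{x^\alpha+y^\alpha=1\}\cap[0,1]^2\). Then \(C_\lambda\times C_\lambda\) at level \(n\) consists of \(4^n\) squares of side \((1-\lambda)\lambda^{n}\)-comparable to \(\lambda^n\). If \(N_n\) of them meet \(\Gamma_\alpha\), then \(E_{\alpha,\lambda}\) is covered by \(N_n\) sets of diameter \(\lesssim\lambda^n\). The target bound follows once we show \(N_n\leq C_\epsilon(4\lambda)^{n}\lambda^{-\epsilon n}\) for each \(\epsilon>0\), or \(N_n\leq C_\epsilon\lambda^{-\epsilon n}\) when \(4\lambda\leq1\). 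This is exactly "dimension of the dust minus one", which is why the Frostman bound from Shmerkin's theorem with exponent close to \(\min\{\log4/\log(1/\lambda),1\}\) is the natural tool.

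\emph{Step 1 (local linearisation).} Away from the two axis points, \(\Gamma_\alpha\) is a smooth graph with slope \(-(x/y)^{\alpha-1}\). First we split the curve into arcs on which the tangent direction is nearly constant, say within \(\delta\). On a square \(Q\) of side \(\lambda^m\) along such an arc, we rescale by \(\lambda^{-m}\). The number of level-\((m+k)\) descendants of \(Q\) meeting \(\Gamma_\alpha\) is then at most the number of level-\(k\) Cantor squares meeting a \(\delta\)-neighbourhood of a line \(s\cos\theta+t\sin\theta=c\). Via the projection \(\pi_\theta(s,t)=s\cos\theta+t\sin\theta\), this count equals \(4^k\nu_\theta([c-\rho,c+\rho])\) up to constants, with \(\rho\approx\lambda^k+\delta\). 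Here \(\nu_\theta\) is the projected natural measure: a homogeneous self-similar measure with four equal weights and ratio \(\lambda\), translations \(\{0,(1-\lambda)\cos\theta,(1-\lambda)\sin\theta,(1-\lambda)(\cos\theta+\sin\theta)\}\).

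\emph{Step 2 (Frostman bound via Shmerkin).} If \(\tan\theta\) is irrational, or more generally if exponential separation holds, the quoted consequence of \cite[Theorem~6.2]{Shmerkin2019} gives \(\nu_\theta([c-\rho,c+\rho])\leq A_t\rho^t\) for every \(t<\min\{\log4/\log(1/\lambda),1\}\). With \(\rho\approx\lambda^k\), one descends \(k\) levels with at most \(A_t4^k\lambda^{kt}\) surviving squares per square. Iterating along the curve, re-linearising at each block of \(k\) levels and using that the tangent varies continuously, gives \(N_n\lesssim(A_t4^k\lambda^{kt})^{n/k}\). Letting \(t\) tend to the threshold and \(k\to\infty\) yields the claimed exponent.

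\emph{Main obstacle.} The difficulty is the exceptional tangents. At directions where \(\tan\theta\) is rational, exponential separation can fail and the projection can have atoms or reduced dimension. Moreover \(A_t\) is not uniform in \(\theta\), and the tangent sweeps through every direction in \((-\infty,0)\). We will handle this as follows. Since the tangent direction is strictly monotone along \(\Gamma_\alpha\) (for \(\alpha>1\), \((x/y)^{\alpha-1}\) is monotone on the arc), each exceptional direction is attained at a single point. We fix a finite set of good (irrational-slope) reference directions and use them on arcs with \(\delta\) small. The problem is then isolated near the countably many rational slopes, and we handle those near-rational slopes through their short returns. A cleaner option is a perturbation: the counting is stable when the direction changes by \(\lambda^k\), so at each block we may replace the true tangent by a nearby irrational direction with a controlled constant. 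Only finitely many directions are used per \(k\), so non-uniformity of \(A_t\) is harmless once \(k\) is fixed. The arcs near the axis points, where the slope tends to \(0\) or \(\infty\), are treated by the coordinate symmetry together with the trivial bound there; this does not affect the exponent. Finally, when \(4\lambda\leq1\) the same computation gives subexponential \(N_n\), hence dimension zero.
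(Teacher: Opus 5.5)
There is a genuine gap. Your ingredients (local linearisation plus Shmerkin's Frostman bound for the projected dust) are the right ones, but the global estimate your plan rests on is false.

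\emph{The global count fails near the axes.} Your $N_n$ counts level-$n$ squares meeting the curve, and near $(0,1)$ the curve is very flat. If $x\le\lambda^{n/\alpha}$ then $1\ge(1-x^\alpha)^{1/\alpha}\ge1-x^\alpha\ge1-\lambda^n$. So every square $I_w\times[1-\lambda^n,1]$ with $w$ beginning with $0^{\lceil n/\alpha\rceil}$ meets $\Gamma_\alpha$, and $N_n\ge2^{n-\lceil n/\alpha\rceil}$. This is exponential for $\lambda\le1/4$, where you need subexponential growth. For $\alpha=2$, $\lambda=1/3$ it exceeds $(4/3)^n3^{\epsilon n}$ for small $\epsilon$, since $\sqrt2>4/3$. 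Thus the ``trivial bound'' near the axes does change the exponent. Slope $0$ is an exceptional direction (a horizontal strip of width $\lambda^k$ meets $2^k$ level-$k$ squares), and coordinate symmetry only moves this to slope $\infty$.

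\emph{The same loss of uniformity occurs in the interior.} Slope $-1$ is exceptional for every $\lambda$, because exchanging the $x$- and $y$-digits is an exact overlap for $x+y$. The $2^k$ level-$k$ squares whose digit pairs all satisfy $\varepsilon_i+\delta_i=1$ all meet the line $x+y=1$. The Fermat tangent has this slope at $x=y=2^{-1/\alpha}$. For any direction within $\lambda^k$ of $-1$, the uniform strip bound $A_t4^k\lambda^{kt}$ you use is therefore at least of order $2^k$ per block. That corresponds to exponent $\log2/\log(1/\lambda)$, strictly above the target when $\lambda<1/2$.

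Perturbing to a nearby good direction does not help. The reference directions form a $\lambda^k$-net that changes with $k$, and their constants near $-1$ grow exponentially in $k$. The claim that non-uniformity is harmless once $k$ is fixed overlooks that you must let $k\to\infty$. Moreover, the assertion that irrational slopes satisfy exponential separation is not justified for general $\lambda$, and ``short returns'' near rational slopes is not an argument.

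\emph{The missing idea} is to abandon a global count and use countable stability of Hausdorff dimension, which removes every uniformity requirement. The paper's proof runs as follows.
\begin{itemize}
\item It shows that the slopes where exponential separation fails form a set of Hausdorff dimension zero. For $0<\rho<\lambda$, each of the at most $16^j$ pairs of depth-$j$ translations with distinct horizontal parts excludes a slope interval of length $O((\rho/\lambda)^j)$. The limsup set therefore has dimension at most $\log16/\log(\lambda/\rho)$; then let $\rho\downarrow0$.
\item At a point $x_0$ whose slope $a=f'(x_0)$ is not exceptional, it fixes this one slope. It chooses a block length $j$ with $\log\max(B_{a,t},1)/(j\log(1/\lambda))<\varepsilon$, and a short interval $J$ on which $|f'-a|\le\lambda^j$.
\item Inside every parent square of side $\lambda^k$, the graph over $J$ then lies in a strip of width $2\lambda^{k+j}$ parallel to the same $a$. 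So one constant serves at all scales, with no re-linearisation, and gives a local upper box-dimension bound over $J$.
\item A countable subcover handles all nonexceptional points, including those accumulating at the axes.
\item Since $f''<0$, $f'$ is locally bi-Lipschitz, so the points with exceptional tangent form a set of dimension zero. The two axis points add nothing.
\end{itemize}
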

\begin{proof}
The case \(\lambda=1/2\) is immediate. Fix \(\lambda<1/2\), and
give each depth-\(j\) square of \(C_\lambda^2\) mass \(4^{-j}\).
Projection by \((x,y)\mapsto y-ax\) gives a four-map homogeneous
measure. Two distinct planar translations at depth \(j\) have a
nonzero coordinate difference of absolute value at least
\((1-2\lambda)\lambda^{j-1}\). Thus, for \(0<\rho<\lambda\), each pair
with different horizontal translations excludes a slope interval
of length at most
\[
\frac{2\lambda}{1-2\lambda}(\rho/\lambda)^j
\]
when its projected separation is less than \(\rho^j\).
Pairs with the same horizontal translation exclude no slopes for
large \(j\). There are at most \(16^j\) pairs. The limsup of these
intervals has Hausdorff dimension at most
\(\log16/\log(\lambda/\rho)\). Failure of exponential separation
belongs to this limsup for every \(\rho\); letting \(\rho\downarrow0\)
shows that the exceptional slope set has dimension zero.

Fix a nonexceptional slope \(a\) and an exponent \(t\) as above.
The projection estimate implies that a strip
\(\lvert y-ax-b\rvert\leq\lambda^j\) meets at most
\begin{equation}
B_{a,t}\,4^j\lambda^{jt}
\label{eq:strip-count}
\end{equation}
depth-\(j\) copies of the dust. Indeed, every such copy projects
inside an interval of radius \((|a|+3)\lambda^j\), and the copies
are disjoint and have equal mass.

Let \(f\) be a \(C^1\) graph and suppose \(f'(x_0)=a\).
Given \(\varepsilon>0\), choose a fixed block length \(j\) with
\(\log\max(B_{a,t},1)/(j\log(1/\lambda))<\varepsilon\).
On a sufficiently short interval \(J\) about \(x_0\),
\(\lvert f'-a\rvert\leq\lambda^j\).
Inside any parent square of side \(\lambda^k\), the graph over
\(J\) lies in a strip of width at most \(2\lambda^{k+j}\)
parallel to this fixed slope. Rescaling the parent and applying
\eqref{eq:strip-count} bounds its descendants \(j\) levels later.
Iteration gives upper box dimension at most
\[
\frac{\log4}{\log(1/\lambda)}-t+\varepsilon
\]
for the intersection over \(J\). A countable subcover of the
nonexceptional graph points suffices for Hausdorff dimension.
Let \(\varepsilon\downarrow0\) and then let \(t\) approach its
allowed supremum.

For \(f(x)=(1-x^\alpha)^{1/\alpha}\), \(0<x<1\),
\[
f''(x)=-(\alpha-1)x^{\alpha-2}(1-x^\alpha)^{1/\alpha-2}<0.
\]
On each compact subinterval, the derivative map is locally
bi-Lipschitz. The preimage of the exceptional slope set therefore
has dimension zero, as does its graph. Adding the two axis points
does not change the bound.
\end{proof}

\begin{corollary}\label{cor:first-lower}
For every real \(\alpha>1\), \(\lambda_{\min}(\alpha)\geq1/4\).
\end{corollary}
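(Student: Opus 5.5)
The corollary follows directly from Theorem~\ref{thm:dimension-upper} once we unwind the definition of \(\lambda_{\min}(\alpha)\). That infimum is taken over the set of \(\lambda\in(0,1/2]\) with \(\dim_{\mathrm H}(E_{\alpha,\lambda})>0\). It therefore suffices to show that this set contains no \(\lambda<1/4\). In fact we will show it misses all of \((0,1/4]\), which is slightly more than the infimum bound requires. If the set is empty, the infimum is \(+\infty\) (or conventionally \(1/2\)) and the inequality holds trivially. So the whole task reduces to a computation with the explicit upper bound.

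Fix a real \(\alpha>1\) and \(0<\lambda\leq1/4\). Then \(1/\lambda\geq4\), so \(\log(1/\lambda)\geq\log4=2\log2\). Hence
\[
\frac{2\log2}{\log(1/\lambda)}-1\leq0,
\qquad\text{so}\qquad
\max\left\{\frac{2\log2}{\log(1/\lambda)}-1,0\right\}=0 .
\]
Theorem~\ref{thm:dimension-upper} applies for every \(0<\lambda\leq1/2\), in particular for this \(\lambda\). It gives \(\dim_{\mathrm H}(E_{\alpha,\lambda})\leq0\), so \(\dim_{\mathrm H}(E_{\alpha,\lambda})=0\).

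Thus every \(\lambda\) with positive dimension satisfies \(\lambda>1/4\), and the infimum is at least \(1/4\). There is no real obstacle here. The only point worth checking is that the theorem's proof covers the boundary value \(\lambda=1/4\) itself, where the dimension of the dust equals one. It does: the statement is uniform in \(0<\lambda\leq1/2\), and the bound is non-strict, so no separate endpoint argument is needed.
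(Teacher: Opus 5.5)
Your proposal is correct and is exactly the paper's argument: for \(0<\lambda\leq1/4\) the upper bound of Theorem~\ref{thm:dimension-upper} equals zero, so no such \(\lambda\) has positive-dimensional \(E_{\alpha,\lambda}\), and the infimum is at least \(1/4\). Your remark that the theorem already covers the endpoint \(\lambda=1/4\) is accurate.
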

\begin{proof}
The preceding bound is zero for \(0<\lambda\leq1/4\).
\end{proof}

We construct parameters as roots of the equation, keeping both
Cantor tails unrestricted. The distinction between the resulting
root distribution and Lebesgue measure will matter.
Write
\[
\pi_\lambda(\omega)=(1-\lambda)\sum_{i=0}^\infty\omega_i\lambda^i,
\qquad \omega\in\{0,1\}^{\mathbb N_0},
\]
and let \(\mu_\lambda\) be its fair Bernoulli distribution.
All parameter derivatives of \(\pi_\lambda\) of a fixed finite
order are uniformly bounded on compact subintervals of \((0,1)\).
For a measure with density \(h\), we use
\(\|h\|_{H^\sigma}^2=\int(1+|\xi|^2)^\sigma|\widehat h(\xi)|^2\,d\xi\).

Here is the regularity input for the linear models. The proof uses
the decomposition into a sparse Fourier-decaying factor and a regular
convolution factor; see~\cite{Shmerkin2014,ShmerkinSolomyak2016}
for this method. The specific inputs are cited below.

\begin{lemma}\label{lem:linear-regularity}
Outside a set of Hausdorff dimension at most one in
\((1/4,1/2)\times(0,\infty)\), the distribution of
\(rX+Y\), for independent \(X,Y\) with distribution \(\mu_\lambda\),
has a density in \(H^\sigma\) for some \(\sigma>0\).
\end{lemma}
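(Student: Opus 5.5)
The plan is to split $\mu_\lambda$ into a sparse factor with power Fourier decay and a regular factor with $L^2$ density, in the style of \cite{Shmerkin2014,ShmerkinSolomyak2016}. The distribution of $rX+Y$ is the convolution $\mu_\lambda\circ(x\mapsto rx)^{-1}*\mu_\lambda$, a self-similar measure. Fix a large integer $k$. Write $\mu_\lambda=\mu_\lambda^{(k)}*\mu_\lambda^{(\neg k)}$, where $\mu_\lambda^{(k)}$ collects the digits $\omega_i$ with $k\mid i$ and $\mu_\lambda^{(\neg k)}$ collects the rest. Both factors are infinite Bernoulli-type convolutions. The first, $\mu_\lambda^{(k)}$, is a Bernoulli convolution with ratio $\lambda^k$. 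The second is a self-similar measure with $2^{k-1}$ maps and ratio $\lambda^k$. Its similarity dimension is $(k-1)\log2/(k\log(1/\lambda))$. For $\lambda>1/4$ the corresponding planar quantity $2(k-1)\log2/(k\log(1/\lambda))$ exceeds $1$ once $k$ is large.

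\emph{Step 1: the regular factor.} We would show that for $k$ fixed and large, the measure $\nu_{\lambda,r}^{(\neg k)}:=r\mu_\lambda^{(\neg k)}*\mu_\lambda^{(\neg k)}$ is absolutely continuous with an $L^2$ density for $(\lambda,r)$ outside a set of dimension at most $1$. The natural route is Shmerkin--Solomyak's transversality/energy argument. Integrate the $L^2$ (correlation dimension) energy over a small parameter box and use a transversality estimate for the difference functions $\sum_i(\omega_i-\omega_i')\lambda^i r+\sum_i(\eta_i-\eta_i')\lambda^i$ in $(\lambda,r)$. Here $r$ enters linearly, so transversality in the $r$ direction is automatic whenever the $X$-digit difference is nonzero. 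This gives $L^2$ density for a.e. parameter. To upgrade a.e. to a dimension-one exceptional set, I would instead combine Hochman-type exponential separation with Shmerkin's $L^q$ result \cite[Theorem~6.2]{Shmerkin2019}, in the same spirit as the exceptional-slope computation in Theorem~\ref{thm:dimension-upper}. Exponential separation for the planar-to-line family fails only on a set of dimension $\le 1$, namely the limsup of thin neighbourhoods of analytic curves in the $(\lambda,r)$-plane. Off that set the $L^q$ dimensions are full, and full $L^2$ dimension together with the similarity dimension exceeding $1$ gives a density in $L^2$.

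\emph{Step 2: the decaying factor.} We need $|\widehat{\mu^{(k)}_\lambda * r\mu^{(k)}_\lambda}(\xi)|\le C|\xi|^{-\kappa}$ for some $\kappa>0$. By Erdős--Kahane, the Bernoulli convolution with ratio $\lambda^k$ has power Fourier decay outside a set of $\lambda$ of Hausdorff dimension zero. It suffices to use only the $Y$-factor $\mu^{(k)}_\lambda$, since the $rX$ factor has Fourier transform bounded by $1$. The exceptional set in this step is therefore $E_0\times(0,\infty)$ with $\dim E_0=0$, which has dimension $1$.

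\emph{Step 3: combine.} Off the union of the exceptional sets, the distribution of $rX+Y$ equals $g*\rho$, where $g\in L^2$ and $\rho$ has Fourier decay $|\xi|^{-\kappa}$. Hence $\int(1+|\xi|^2)^{\kappa}|\widehat g\widehat\rho|^2\lesssim\|g\|_2^2$, so the density lies in $H^\sigma$ with $\sigma=\kappa$. Some care is needed because the two factors are built from complementary digit sets. For this reason the regular factor should be chosen as $r\mu^{(\neg k)}*\mu^{(\neg k)}$ together with the remaining $X$-digits at multiples of $k$, which only helps regularity.

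The main obstacle is Step 1, specifically obtaining the exceptional set of dimension at most one rather than just Lebesgue-null. The first thing I would try is to prove exponential separation off a dimension-one set, via the counting of pairs of level-$j$ words and the analyticity in $\lambda$, exactly as in Theorem~\ref{thm:dimension-upper}. I would then cite Shmerkin's $L^q$ theorem and the standard fact that $L^2$-dimension exceeding $1$ gives an $L^2$ density after convolving with a factor of positive dimension.
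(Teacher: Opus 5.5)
Your decomposition matches the paper's. You split the paired digits at multiples of $k$ from the rest. The regular factor has $4^{k-1}$ equal weights and contraction $\lambda^k$, with similarity dimension above one for $\lambda>1/4$. Erd\H{o}s--Kahane decay comes from the $Y$-coordinate of the sparse factor alone, the $rX$ sparse factor is bounded by one, and the exceptional set has the form $E_0\times(0,\infty)$.

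The step that fails is the end of Step~1. Exponential separation together with \cite[Theorem~6.2]{Shmerkin2019} gives only $D_2=1$ for $\nu^{(\neg k)}_{\lambda,r}$, that is, $\int_{|\xi|\le R}|\widehat\nu|^2\,d\xi\lesssim_\epsilon R^{\epsilon}$. This does not force $\widehat\nu\in L^2$, or even absolute continuity, however large the similarity dimension is. Passing from dimension one to absolute continuity is exactly what remains open for many Bernoulli convolutions. The ``standard fact'' in your last sentence holds only when the other factor has power Fourier decay; positive Hausdorff or $L^2$ dimension of that factor is not enough. So the hypothesis $g\in L^2$ used in Step~3 is not established by your argument.

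The repair is already in your proposal: drop the requirement $g\in L^2$ and use the decay from Step~2 once. With $D_2(g)=1$ and $|\widehat\rho(\xi)|\le C|\xi|^{-\kappa}$, each dyadic annulus gives
\[
\int_{R\le|\xi|\le2R}(1+|\xi|^2)^{\sigma}\,|\widehat g(\xi)\,\widehat\rho(\xi)|^2\,d\xi
\lesssim_\epsilon R^{2\sigma-2\kappa+\epsilon}.
\]
This is summable for every $\sigma<\kappa$, so the law of $rX+Y$ has a density in $H^\sigma$ with $0<\sigma<\kappa$.

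The paper instead gets an $L^2$ density of the regular factor directly from the absolute-continuity theorem \cite[Theorem~9.2]{Shmerkin2019} for analytic families. It checks nondegeneracy via strong separation of $C_\lambda$, then multiplies by the Erd\H{o}s--Kahane decay as you do.

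Two smaller points. First, your separation count is not literally the one in Theorem~\ref{thm:dimension-upper}. Write a level-$n$ translation difference as $rA(\lambda)+B(\lambda)$, with $|A|\ge(1-2\lambda)\lambda^{nk}$ when the $X$-words differ. The bad set for such a pair is then a strip of width $O((\rho/\lambda^k)^n)$ around the curve $r=-B(\lambda)/A(\lambda)$. Covering it needs control of that curve's variation, for example from the polynomial degree, or you can cite Hochman's theorem for analytic families. Second, $k$ must grow as $\lambda\downarrow1/4$, so you need a countable exhaustion by compact parameter rectangles.
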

\begin{proof}
Work on a compact rectangle \(a\leq\lambda\leq b\),
\(0<c\leq r\leq d\), with \(a>1/4\). Choose an integer \(M\)
such that \(4^{M-1}a^M>1\), and split the paired digits into
positions divisible by \(M\) and the remaining positions.
The latter distribution has contraction \(\lambda^M\) and
\(4^{M-1}\) equal weights. It has an \(L^2\) density outside a
parameter set of dimension at most one by
\cite[Theorem~9.2]{Shmerkin2019}. Its analytic nondegeneracy
holds because varying \(r\) in an identically vanishing address
difference would make both coordinate differences vanish;
strong separation for \(C_\lambda\) rules this out.

The sparse factor has power Fourier decay outside a
zero-dimensional set of \(\lambda\), by
\cite[Proposition~2.3]{Shmerkin2014}, applied to \(\lambda^M\).
Its second coordinate alone supplies that decay; the first factor
has Fourier modulus at most one. Multiplying this decay by the
square-integrable Fourier transform of the other factor gives a
positive Sobolev order. The product of the exceptional contraction
set with a bounded slope interval has dimension at most one.
A countable exhaustion of the parameter domain completes the proof.
\end{proof}

The next lemma supplies the nonlinear step. A cylinder coordinate
means \(a(\lambda)+\lambda^h\pi_\lambda(\omega)\), where \(a\)
is the left endpoint of one fixed binary word of length \(h\).

\begin{lemma}\label{lem:root-fibres}
Fix \(\alpha>1\), two cylinder coordinates \(x_\omega(\lambda)\),
\(y_\tau(\lambda)\) of lengths \(h,\ell\), and
\([a,b]\subset(1/4,1/2)\) on which both coordinates stay positive.
Suppose every paired address has a unique root of
\(x_\omega(\lambda)^\alpha+y_\tau(\lambda)^\alpha=1\) in \([a,b]\),
and the parameter derivative of the residual is bounded away
from zero at all these roots.
The distribution of these roots under fair independent addresses
is absolutely continuous, and for almost every parameter with
respect to that distribution,
\[
\dim_{\mathrm H}(E_{\alpha,\lambda})=\frac{\log4}{\log(1/\lambda)}-1.
\]
\end{lemma}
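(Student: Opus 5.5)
The upper bound is already supplied by Theorem~\ref{thm:dimension-upper}. So the plan is to prove two things: absolute continuity of the root distribution, and the matching lower bound $\log4/\log(1/\lambda)-1$ for almost every root parameter.

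\emph{Absolute continuity.} Let $G(\lambda,\omega,\tau)=x_\omega(\lambda)^\alpha+y_\tau(\lambda)^\alpha-1$, and let $\lambda^*(\omega,\tau)$ be its unique root in $[a,b]$. By the implicit function theorem, $\lambda^*$ is a smooth function of the two ``coordinates'' with derivative bounded by the assumed lower bound on $|\partial_\lambda G|$. I would compare the root law with the pushforward of $\mu_{\lambda_0}\times\mu_{\lambda_0}$ under a fixed-$\lambda_0$ linearization. Near a fixed $\lambda_0$, the root $\lambda^*$ is determined, up to $C^1$-small error, by a level of $r X+Y$ with $X,Y\sim\mu_{\lambda_0}$. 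Here the slope is $r=(\partial_x G/\partial_y G)\cdot \lambda_0^{h-\ell}$ and the level is affine in the residual.

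To control the dependence of $\pi_\lambda$ on $\lambda$, I would use a transversality/Peres--Schlag type estimate. For the measure $\eta$ of roots, estimate
\[
\eta\times\eta\{|\lambda^*(\omega,\tau)-\lambda^*(\omega',\tau')|<\rho\}\lesssim\rho .
\]
This reduces to a transversality condition on the address differences. That condition is exactly the analytic nondegeneracy from Lemma~\ref{lem:linear-regularity}: strong separation of $C_\lambda$ prevents an identically vanishing difference. Smoothness in $\lambda$ then upgrades nondegeneracy to quantitative transversality on the compact interval. This gives finite $L^2$ energy of the root law after localizing in $[a,b]$, and hence absolute continuity.

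\emph{Dimension for $\eta$-a.e.\ $\lambda$.} The main tool is a conditioning/slicing argument. Consider the random Cantor point $(x_\omega(\lambda^*),y_\tau(\lambda^*))$, which lies on the curve $\{x^\alpha+y^\alpha=1\}$ for $\lambda=\lambda^*$. Disintegrate the product measure on addresses along the fibres $\{\lambda^*=\lambda\}$; the conditional measures $\nu_\lambda$ are carried by $E_{\alpha,\lambda}$. The goal is a Frostman bound
\[
\nu_\lambda(B(z,\lambda^{n}))\lesssim \lambda^{n(s-\varepsilon)},\qquad s=\frac{\log4}{\log(1/\lambda)}-1,
\]
for $\eta$-a.e.\ $\lambda$. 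A depth-$n$ cylinder has product mass $4^{-n}$. Conditioning on the root landing in a $\lambda^n$-window divides this mass by roughly $\eta(\text{window})\approx\lambda^{n}$, provided $\eta$ has bounded local density at most scales. Ratios of such masses then give the exponent $s$.

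To make this rigorous, I would use the $H^\sigma$ densities of Lemma~\ref{lem:linear-regularity}, applied to the linearized tails at each cylinder scale, to control the conditional mass of sub-cylinders uniformly. A Borel--Cantelli argument over dyadic scales then handles exceptional windows, and the mass distribution principle finishes the lower bound. The dimension-one exceptional set of Lemma~\ref{lem:linear-regularity} is avoided because $(\lambda,r)$ along the roots traces a curve, which generically misses a set of dimension one. If needed, I would shrink $[a,b]$ and remove a $\eta$-null set.

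\emph{Main obstacle.} The hardest step is the uniform control of conditional measures across scales. The nonlinear dependence of $\pi_\lambda$ on $\lambda$ inside each sub-cylinder must be absorbed as a small perturbation of the linear model $rX+Y$ with uniform Sobolev constants. A second concern is that the $\lambda$-dependence feeds back into the fibre: conditioning on the root fixes $\lambda$ only up to $\lambda^n$, and at depth $n$ this perturbs positions by $n\lambda^n$. I would try first to rescale each depth-$n$ cylinder and show that the rescaled residual converges in $C^1$ to a linear model. This follows as in the proof of Lemma~\ref{lem:irrational-tangent}, where $\operatorname{Lip}(e_j)\to0$. With that in hand, the $L^2$ density bounds transfer with a loss only of $\lambda^{-n\varepsilon}$.
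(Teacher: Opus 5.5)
There is a genuine gap in your absolute-continuity step. No auxiliary parameter is integrated against Lebesgue measure here, because the root is itself a function of the address. A Peres--Schlag transversality estimate therefore has nothing to act on.

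Condition two address pairs on sharing exactly $j$ paired digits. Up to coupling errors, the event $|\lambda^*-\lambda^{*\prime}|<\rho$ is then the event that $r(X-X')+(Y-Y')$ falls in a window of size comparable to $\rho\lambda^{-j}$, with independent tails at the common root. Bounding its probability needs an $L^2$ bound for the law of $rX+Y$ at that state, that is, regularity of a projection of $\mu_\lambda\times\mu_\lambda$.

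Analytic nondegeneracy does not supply this. It is only a hypothesis of Shmerkin's theorem, which gives regularity outside a set of dimension at most one. Regularity genuinely fails at some states: at $\lambda=1/3$, $r=1$, the law of $X+Y$ is the base-$3$ self-similar measure with weights $(1/4,1/2,1/4)$, of dimension $3\log2/(2\log3)<1$.

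Even at good states, a frozen linear model at depth $j$ is accurate only up to a normalized error $O((j+1)b^j)$. For small $j$ this is far larger than the window $\rho\lambda^{-j}$, so a multiscale argument is unavoidable. The paper accordingly proves only absolute continuity, not finite energy. It restricts to a compact address set $K$ of positive mass on which the frozen full-tail kernels have uniform $H^\sigma$ bounds. It then shows that their $4^{-j}$-weighted mixtures over prefixes meeting $K$ converge in $L^1$, using exact splitting, telescoping deletions, and summable replacement errors via \eqref{eq:kernel-interpolation}.

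Your reason for avoiding the exceptional set of Lemma~\ref{lem:linear-regularity} is also wrong. The states $(\lambda,r)$ do not trace a curve, and even a curve could lie inside a set of dimension one. What is needed is that distinct paired depth-$j$ cylinders are $\gtrsim a^j$ apart in the incidence coordinates $(\lambda,x,y)$. The incidence measure is then Frostman with exponent $\log4/\log(1/a)>1$, and this is where $a>1/4$ enters. Since $(\lambda,r)\mapsto(\lambda,x,y)$ is Lipschitz on the arc, the state measure gives no mass to sets of dimension at most one. Lower semicontinuity of the Sobolev norm then produces $K$.

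Your dimension step (disintegration, conditional prefix mass $4^{-j}g_w(\lambda)/g(\lambda)$, Borel--Cantelli) points in the right direction, but it also needs a one-sided bound $g_w\le h_w+e_w$. Here $h_w$ uses fresh tails not restricted to $K$, $\|h_w\|_2^2\le C\lambda_w^{-j}$, and $\|e_w\|_1$ is summable. This converts the $L^2$ bound into a summable estimate for the events $\{g_w>\lambda^{-j}e^{\delta j}\}$, which is what Borel--Cantelli uses. A Frostman bound uniform in $z$, as you propose, is neither obtained nor required.
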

\begin{proof}
Denote the root map by \(f\), and the fair product address measure
by \(\mu\). Compactness and the implicit function estimate give
uniform local root estimates. Distinct paired depth-\(j\)
cylinders are separated by a constant times \(a^j\) in the
incidence coordinates
\[
(f(\omega,\tau),x_\omega(f(\omega,\tau)),
                       y_\tau(f(\omega,\tau))).
\]
To check this, compare their coordinates at one common parameter:
strong separation gives the stated lower bound, whereas moving
the parameter changes the coordinates by at most a fixed multiple
of its displacement. Consequently this incidence measure is
Frostman with exponent \(\log4/\log(1/a)>1\).

The relevant linear direction is
\[
r=\lambda^{h-\ell}(x/y)^{\alpha-1}.
\]
On the positive compact arc, the state \((\lambda,r)\) recovers
\(x/y=(r\lambda^{\ell-h})^{1/(\alpha-1)}\) and then \(x,y\) uniquely
from \(x^\alpha+y^\alpha=1\). This inverse is Lipschitz. The state measure
therefore gives zero mass to the exceptional set of
Lemma~\ref{lem:linear-regularity}.
Choose a compact address set \(K\) of positive measure on which
the linear densities have one common positive Sobolev order and
bounded norm. Such sets exhaust the source measure: the Sobolev
norm is lower semicontinuous in the state by Fatou's lemma.

For every paired prefix \(w\) of length \(j\) meeting \(K\),
choose a representative in \(K\), with root \(\lambda_w\),
coordinates \(x_w,y_w\), remaining tails \(u_w,v_w\), and residual
parameter derivative \(D_w\). Freeze the complete linear kernel
as the law of
\begin{equation}
\lambda_w-\frac{\alpha x_w^{\alpha-1}\lambda_w^{h+j}}{D_w}(X-u_w)
           -\frac{\alpha y_w^{\alpha-1}\lambda_w^{\ell+j}}{D_w}(Y-v_w),
\label{eq:root-kernel}
\end{equation}
where \(X,Y\) are independent full \(\mu_{\lambda_w}\) tails.
The fresh tails are not restricted to \(K\). Its density \(h_w\)
satisfies
\begin{equation}
\|h_w\|_2^2\leq C\lambda_w^{-j},
\label{eq:kernel-l2}
\end{equation}
and, after division by its scale \(\lambda_w^j\), a uniform
\(H^\sigma\) bound. The coefficients and their ratio stay in
fixed compact sets.

Splitting the fresh tails into their first digits decomposes
\eqref{eq:root-kernel} exactly into four equally weighted kernels
still frozen at the parent. Replace each retained child kernel
by the one at its own representative. The two roots differ by
\(O(\lambda_w^j)\). Shared depth-\(j\) coordinate tails and their
parameter derivatives differ by \(O(\lambda_w^j)\) and
\(O((j+1)\lambda_w^{j-1})\), respectively. Taylor expansion of
the residual therefore bounds the unscaled coupling error by
\(O((j+1)\lambda_w^{2j-1})\). Parent and child scales are uniformly
comparable, since \(j|\lambda_{wi}-\lambda_w|\to0\) uniformly.
After normalization the coupling error is \(O((j+1)b^j)\).

For two bounded-support probability densities with bounded
\(H^\sigma\) norm and coupling displacement at most \(t\),
Fourier splitting gives
\begin{equation}
\|h-\widetilde h\|_1\leq C t^{2\sigma/(3+2\sigma)}.
\label{eq:kernel-interpolation}
\end{equation}
Indeed, below frequency \(R\) the squared Fourier difference
integrates to \(O(t^2R^3)\); above it the Sobolev bounds give
\(O(R^{-2\sigma})\). Optimize \(R\) and use Cauchy--Schwarz on
the common bounded support. Hence the parent--child replacement
errors are bounded by a summable sequence
\(C(j+1)^\kappa b^{\kappa j}\), with \(\kappa>0\).

Let \(K_j\) be the union of depth-\(j\) cylinders meeting \(K\).
Sum the complete kernels with weights \(4^{-j}\). Refinement
consists of exact splitting, deletion of children not meeting
\(K\), and the preceding replacements. Deleted masses telescope
because \(K_j\downarrow K\); replacement errors are summable.
The sums converge in \(L^1\). Direct root linearization identifies
their weak limit as \(f_*(\mu|K)\), which therefore has a density
\(g\).

Repeat the construction inside a fixed prefix \(w\) of length \(j\).
Let \(g_w\) be the density of \(4^j f_*(\mu|K\cap[w])\).
Deletion has negative sign, so summing only positive replacement
errors gives
\begin{equation}
g_w\leq h_w+e_w,\qquad e_w\geq0,\qquad
\|e_w\|_1\leq C(j+1)^\kappa b^{\kappa j}.
\label{eq:root-one-sided}
\end{equation}
For any \(T>0\), splitting according to \(h_w>T/2\) yields
\begin{equation}
\int_{\{g_w>T\}}g_w
\leq \frac{2\|h_w\|_2^2}{T}+3\|e_w\|_1.
\label{eq:root-tail}
\end{equation}
On the support of \(g_w\), \(|\lambda-\lambda_w|\leq C\lambda_w^j\),
so \(\lambda^{-j}/\lambda_w^{-j}\to1\) uniformly.
Apply \eqref{eq:root-tail} at threshold
\(\tfrac12\lambda_w^{-j}e^{\delta j}\), multiply by \(4^{-j}\),
and sum over prefixes. Equations \eqref{eq:kernel-l2}--\eqref{eq:root-tail}
give a summable bound for the event
\(g_w(\lambda)>\lambda^{-j}e^{\delta j}\).

Apply the disintegration theorem~\cite{Kallenberg2021} to
\(\mu|K\), after normalization, over \(f\). For almost every root with
\(0<g(\lambda)<\infty\), the conditional prefix mass is exactly
\[
4^{-j}\frac{g_w(\lambda)}{g(\lambda)}.
\]
Borel--Cantelli gives, at conditional-almost every address,
an eventual bound
\((4\lambda)^{-j}e^{\delta j}/g(\lambda)\).
At this fixed parameter, different paired cylinders are separated
by a fixed multiple of \(\lambda^j\). A ball of a sufficiently
small fixed multiple of that radius about an address lies in its
own cylinder. The lower local dimension of the conditional
measure is therefore at least
\((\log(4\lambda)-\delta)/\log(1/\lambda)\).
Let \(\delta\downarrow0\) through a countable sequence.
Theorem~\ref{thm:dimension-upper} gives the reverse inequality.
Finally exhaust almost all source addresses by the compact sets
\(K\) chosen above. Their pushforwards are absolutely continuous
and give no mass to the failure set, proving both assertions for
the full root law.
\end{proof}

\begin{theorem}\label{thm:irrational-parameters}
For every irrational \(\alpha>1\) and every nonempty open interval
\(J\subset(1/4,1/2)\), the set
\(\{\lambda\in J:\dim_{\mathrm H}(E_{\alpha,\lambda})>0\}\)
has positive Lebesgue measure.
\end{theorem}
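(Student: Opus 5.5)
The plan is to deduce this from Lemma~\ref{lem:root-fibres}. We will build, inside any prescribed \(J\), a root map \(f\) whose root law is absolutely continuous and nonzero. That lemma then gives \(\dim_{\mathrm H}(E_{\alpha,\lambda})=\log4/\log(1/\lambda)-1>0\) for \(f_*\mu\)-almost every \(\lambda\). Since \(\lambda>1/4\), this value is positive. Hence the set of good parameters has full \(f_*\mu\)-measure, and therefore positive Lebesgue measure, because \(f_*\mu\ll\mathrm{Leb}\) is a nonzero measure supported in \(J\). The whole task is thus to produce two cylinder coordinates \(x_\omega(\lambda)\), \(y_\tau(\lambda)\) and a compact \([a,b]\subset J\) satisfying the two hypotheses of Lemma~\ref{lem:root-fibres}. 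These are a unique root for every paired address, and a residual parameter derivative bounded away from zero.

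To build the cylinders I would copy the scaling used in the irrational case of Theorem~\ref{thm:real-powers}. Take \(x=\lambda^{k}\cdot(\text{cylinder})\) near the axis and \(y=1-\lambda^{m}\cdot(\text{cylinder})\), using the reflected coordinate (note \(1-C_\lambda=C_\lambda\)). The equation \(x^\alpha+y^\alpha=1\) then reads, to leading order,
\[
\lambda^{\alpha k}u^\alpha\approx\alpha\lambda^{m}v ,
\]
with \(u,v\) ranging over short cylinders of \(C_\lambda\). Dividing by \(\lambda^m\) gives the residual
\[
R(\lambda)=\lambda^{\alpha k-m}u_\lambda^\alpha-\alpha v_\lambda+o(1).
\]
Here is where the irrationality of \(\alpha\) enters. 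Because \(\alpha\) is irrational, the fractional parts of \(\alpha k-m\) are dense. Choose a subsequence of \(k,m\to\infty\) along which \(\alpha k-m\) converges, with \(\lambda^{\alpha k-m}\) making the equation solvable near the centre \(\lambda_0\) of \(J\). The uniform \(C^1\)-convergence identity \((1-(1-z)^\alpha)/z=\alpha\int_0^1(1-tz)^{\alpha-1}dt\) controls the \(o(1)\) terms in \(C^1\) in \(\lambda\), uniformly over the tails.

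The parameter derivative will come from the factor \(\lambda^{\alpha k-m}\). Its logarithmic derivative is \((\alpha k-m)/\lambda\), which is not bounded. So I would instead fix a finite, nonzero offset \(d=\alpha k-m\) (up to a bounded shift), approximated through irrational rotation. The \(\lambda\)-dependence of \(u_\lambda,v_\lambda\) is only \(O(1)\), with bounded parameter derivatives. The requirement is then that
\[
\partial_\lambda\bigl(\lambda^{d}u_\lambda^\alpha-\alpha v_\lambda\bigr)
\]
have a fixed sign uniformly over all tails. I would arrange this by taking very deep cylinders, which make \(u_\lambda\), \(v_\lambda\) and their \(\lambda\)-derivatives nearly constant, and choosing the prefix integer offset so that \(|d|\lambda^{d-1}u^\alpha\) dominates \(\alpha|\partial_\lambda v|\).

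Uniqueness then follows, provided \([a,b]\) is chosen small but fixed. Monotonicity in \(\lambda\) gives at most one root, and a sign change at the endpoints \(a,b\), valid for every tail, gives existence. This endpoint sign change needs the cylinder widths to be small relative to the variation of \(\lambda^{d}\) across \([a,b]\). It is exactly the tension between a deep cylinder (so that tails matter little) and a fixed width \(b-a\) (so that the sign changes).

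I expect this last balancing step to be the main obstacle: a fixed \([a,b]\subset J\) on which every address has a root, while the derivative stays bounded below. I would first try a cylinder depth \(h\) with \(\lambda^h\ll b-a\), then pick \(k,m\) via irrational rotation so that \(R(a)<0<R(b)\) uniformly. The positivity of both coordinates on \([a,b]\), the remaining hypothesis of Lemma~\ref{lem:root-fibres}, is automatic for these near-axis cylinders.
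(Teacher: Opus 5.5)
Your plan is essentially the paper's proof: near-axis cylinders \(x=\lambda^k(1-\lambda+\lambda^L\pi_\lambda(\omega))\) and \(y=1-\lambda^m+\lambda^{m+L}\pi_\lambda(\tau)\), irrational rotation giving \(m-\alpha k\to d\), the tail depth \(L\) chosen before \(k,m\), and then Lemma~\ref{lem:root-fibres}, exactly as you outline. The one loose step is transversality: \(d\) is not a free offset to tune, since it is pinned by placing the root at \(\lambda_0\in J\); but with these choices the limit \((1-\lambda)^\alpha-\alpha\lambda^{d}\), with \(d>0\), is strictly decreasing, because the \(\partial_\lambda u\) term missing from your domination condition has the same sign as the \(\lambda^{d}\) term.
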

\begin{proof}
Choose \(1/4<a<\lambda_0<b<1/2\) with \([a,b]\subset J\), and set
\[
d=\frac{\alpha\log(1-\lambda_0)-\log \alpha}{\log\lambda_0}>0.
\]
The function \((1-\lambda)^\alpha-\alpha\lambda^d\) decreases strictly and
vanishes at \(\lambda_0\). Irrational rotation gives positive
integers \(k_i,m_i\to\infty\) with \(m_i-\alpha k_i\to d\).
For an integer \(L\geq1\), use the actual cylinder coordinates
\[
x=\lambda^{k_i}(1-\lambda+\lambda^L\pi_\lambda(\omega)),
\qquad
y=1-\lambda^{m_i}+\lambda^{m_i+L}\pi_\lambda(\tau).
\]
Their prefixes are \(0^{k_i}10^{L-1}\) and \(1^{m_i}0^L\).
Dividing the residual by \(\lambda^{\alpha k_i}\) and letting
\(i\to\infty\) gives, uniformly in both addresses in \(C^1[a,b]\),
\[
(1-\lambda+\lambda^L\pi_\lambda(\omega))^\alpha
-\alpha\lambda^d(1-\lambda^L\pi_\lambda(\tau)).
\]
The remainder is \(O_\alpha(m_i b^{m_i-1})\) in \(C^1\).
As \(L\to\infty\), this converges to the strictly decreasing
comparison function. Choose \(L\), then \(i\), sufficiently large.
Every tail pair now has a unique transverse root in \((a,b)\).
Lemma~\ref{lem:root-fibres} gives a nonzero absolutely continuous
root law carried by positive-dimensional parameters, proving the
positive Lebesgue measure assertion.
\end{proof}

\begin{theorem}\label{thm:irrational-onset}
For every irrational \(\alpha>1\), \(\lambda_{\min}(\alpha)=1/4\).
\end{theorem}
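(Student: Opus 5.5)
The statement combines two inequalities, and both are already available. The lower bound \(\lambda_{\min}(\alpha)\geq1/4\) holds for every real \(\alpha>1\) by Corollary~\ref{cor:first-lower}. That corollary rests on Theorem~\ref{thm:dimension-upper}: its bound \(\max\{2\log2/\log(1/\lambda)-1,0\}\) vanishes for \(0<\lambda\leq1/4\), so no such \(\lambda\) lies in the set whose infimum defines \(\lambda_{\min}(\alpha)\). It therefore remains only to prove \(\lambda_{\min}(\alpha)\leq1/4\) when \(\alpha\) is irrational.

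For the upper bound I would use Theorem~\ref{thm:irrational-parameters}. Fix irrational \(\alpha>1\) and \(\varepsilon\in(0,1/4)\), and apply that theorem with the nonempty open interval \(J=(1/4,1/4+\varepsilon)\subset(1/4,1/2)\). The set \(\{\lambda\in J:\dim_{\mathrm H}(E_{\alpha,\lambda})>0\}\) has positive Lebesgue measure, so in particular it is nonempty. Choosing any \(\lambda\) in it gives \(\lambda\in(0,1/2]\) with \(\dim_{\mathrm H}(E_{\alpha,\lambda})>0\) and \(\lambda<1/4+\varepsilon\). Hence \(\lambda_{\min}(\alpha)<1/4+\varepsilon\), and letting \(\varepsilon\downarrow0\) gives \(\lambda_{\min}(\alpha)\leq1/4\). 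Together with the lower bound this yields equality.

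There is no real obstacle at this stage. All the difficulty sits in Theorem~\ref{thm:irrational-parameters}, namely the transversal root construction via irrational rotation and the absolute continuity of the root law in Lemma~\ref{lem:root-fibres}, and we take that theorem as given. The only points to check are bookkeeping. The interval \(J\) must lie inside \((1/4,1/2)\), as the theorem requires. A set of positive measure is nonempty. And the infimum in the definition of \(\lambda_{\min}\) is taken over \((0,1/2]\), which contains \(J\).
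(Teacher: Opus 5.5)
Your proposal is correct and is essentially the paper's own proof. The paper likewise takes the lower bound from Corollary~\ref{cor:first-lower} and the upper bound from Theorem~\ref{thm:irrational-parameters}, which supplies positive-dimensional parameters arbitrarily close to \(1/4\) from above, exactly as you do with the intervals \((1/4,1/4+\varepsilon)\).
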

\begin{proof}
The preceding theorem gives positive-dimensional parameters
arbitrarily close to \(1/4\) from above; Corollary~\ref{cor:first-lower}
gives the opposite inequality.
\end{proof}

\subsection{Rational exponents}
\label{subsec:rational-continuity}

For a reduced rational exponent, the available near-axis phases
form a lattice. Its spacing gives a quantitative bound on the first
Hausdorff phase transition. Together with the irrational case, this
bound determines the continuity of the transition parameter as a
function of the exponent.

\begin{theorem}\label{thm:denominator}
For every reduced rational \(\alpha=a/b\geq2\), with positive integers
\(a,b\),
\[
0\leq\lambda_{\min}(\alpha)-\frac14
\leq\frac{(3/2)^\alpha\log4}{2\alpha b}.
\]
\end{theorem}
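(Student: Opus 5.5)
The lower inequality is Corollary~\ref{cor:first-lower}. For the upper inequality we adapt the proof of Theorem~\ref{thm:irrational-parameters}. There, irrational rotation was used to make \(m-\alpha k\) converge to a prescribed phase \(d\). For \(\alpha=a/b\) this is replaced by a lattice statement: as \(k,m\) range over positive integers, the quantity \(m-\alpha k\) takes exactly the values in \(b^{-1}\mathbb Z\), each one infinitely often, since \(\gcd(a,b)=1\). Accordingly, we only need a target \(\lambda_0\) slightly above \(1/4\) whose phase lies in \(b^{-1}\mathbb Z\).

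\textbf{Step 1: the phase function.} As in the proof of Theorem~\ref{thm:irrational-parameters}, set
\[
d(\lambda)=\frac{\alpha\log(1-\lambda)-\log\alpha}{\log\lambda},
\]
which is the phase at which \((1-\lambda)^\alpha=\alpha\lambda^{d}\). This function is continuous on \([1/4,1/2]\). We will bound its derivative from below on \([1/4,\lambda_1]\), where \(\lambda_1=1/4+(3/2)^\alpha\log4/(2\alpha b)\). The aim is to show that \(d\) sweeps an interval of length at least \(1/b\) while \(\lambda\) moves across \((1/4,\lambda_1)\).

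Differentiating gives
\[
d'(\lambda)=\frac{-\alpha/(1-\lambda)}{\log\lambda}-\frac{\alpha\log(1-\lambda)-\log\alpha}{\lambda(\log\lambda)^2}.
\]
The constant \((3/2)^\alpha\log4/(2\alpha)\) presumably comes from a crude mean-value estimate of this derivative. The factors \(\log 4=-\log(1/4)\) and \(\lambda\approx 1/4\) enter in the obvious way. The factor \((3/2)^\alpha\) could arise if the argument is run instead through \(\lambda^{d}=(1-\lambda)^\alpha/\alpha\) and one differentiates \(\lambda^{d}\), where \((1-\lambda)^{-\alpha}\le(4/3)^\alpha\) enters, or through some similar exponential bound.

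If \(\lambda_1\ge 1/2\) the statement is vacuous for the relevant range, because \(\lambda_{\min}\le 1/2\) is known from the results near \(1/2\). Otherwise we get \(|d(\lambda_1)-d(1/4)|\ge 1/b\), so some \(\lambda_0\in(1/4,\lambda_1)\) has \(d(\lambda_0)\in b^{-1}\mathbb Z\). We also need \(d(\lambda_0)>0\), or whatever sign condition makes \(m=\alpha k+d\) a positive integer for large \(k\); this holds because \(d\) is positive on the range, as in the irrational proof.

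\textbf{Step 2: the transfer.} Take \(k\) in the residue class mod \(b\) that makes \(m=\alpha k+d(\lambda_0)\) an integer, and let \(k\to\infty\) along that class. Use the same cylinders
\[
x=\lambda^{k}(1-\lambda+\lambda^L\pi_\lambda(\omega)),\qquad y=1-\lambda^{m}+\lambda^{m+L}\pi_\lambda(\tau).
\]
The rescaled residual then converges in \(C^1[a',b']\) to the strictly decreasing comparison function. Here \([a',b']\) is a small interval around \(\lambda_0\) inside \((1/4,\lambda_1)\). After fixing \(L\) and then \(k\) large enough, every address pair has a unique transverse root in \([a',b']\). Lemma~\ref{lem:root-fibres} then yields a positive-measure set of \(\lambda\in(a',b')\) with positive dimension. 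Hence \(\lambda_{\min}(\alpha)<\lambda_1\).

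\textbf{Main obstacle.} The hard step is the derivative bound in Step~1, which must produce exactly the constant \((3/2)^\alpha\log4/(2\alpha b)\) uniformly for \(\alpha\ge2\) over the whole interval \([1/4,\lambda_1]\). I would first write \(d(\lambda)\log(1/\lambda)=\log\alpha-\alpha\log(1-\lambda)\). I would then differentiate this implicitly, bounding \(\log(1/\lambda)\le\log4\) and \(d\ge0\), and estimate \(\alpha/(1-\lambda)\) from below. If the resulting constant differs, the hypothesis \(\alpha\ge2\) is presumably what is needed to absorb the lower-order term and to keep \(\lambda_1\) below \(1/2\) whenever the bound is nontrivial.
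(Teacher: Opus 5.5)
Your proposal is correct, and the step you flag as the main obstacle is routine. Your Step 2 is the paper's transfer: the exact lattice phase \(m-\alpha k=d\) from \(bm-ak=bd\), then the cylinders of Theorem~\ref{thm:irrational-parameters} and Lemma~\ref{lem:root-fibres}. Your Step 1 differs from the paper's argument. To close it, your own implicit differentiation of \(d(\lambda)\log(1/\lambda)=\log\alpha-\alpha\log(1-\lambda)\) gives
\[
d'(\lambda)\log(1/\lambda)=\frac{\alpha}{1-\lambda}+\frac{d(\lambda)}{\lambda}\geq\alpha\qquad(1/4\leq\lambda\leq1/2),
\]
since \(d>0\). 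Hence \(d'\geq\alpha/\log4\). If \(\lambda_1<1/2\), the mean value theorem gives
\[
d(\lambda_1)-d(1/4)\geq(3/2)^\alpha/(2b)>1/b,
\]
because \((3/2)^\alpha\geq9/4>2\). So the first point of \(b^{-1}\mathbb Z\) strictly above \(d(1/4)\) equals \(d(\lambda_0)\) for some \(1/4<\lambda_0<\lambda_1\). You never need to reproduce the constant exactly, so the speculation about where \((3/2)^\alpha\) comes from can be dropped.

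The paper argues differently. It fixes \(d:=(\lfloor bd_0\rfloor+1)/b\) with \(d_0=d(1/4)\), and applies the mean value theorem in \(\lambda\) to \(\Phi(\lambda)=(1-\lambda)^\alpha-\alpha\lambda^d\). It uses two estimates:
\[
\Phi(1/4)=(3/4)^\alpha\bigl(1-4^{-(d-d_0)}\bigr)\leq(3/4)^\alpha\log4/b,
\]
and \(|\Phi'|\geq\alpha(1-\lambda)^{\alpha-1}\geq\alpha2^{1-\alpha}\) on \([1/4,1/2]\). It adds a separate check \(d\leq d_0+1<\alpha+\log_2\alpha=d(1/2)\), equivalent to \(\alpha3^\alpha>4\), to place the root in \((1/4,1/2)\).

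The factor \((3/2)^\alpha/2\) is the ratio \((3/4)^\alpha/2^{1-\alpha}\). It arises because the value at \(1/4\) and the worst slope at \(1/2\) are bounded separately. Your version measures the slope of the zero set, \(d'=-\Phi_\lambda/\Phi_d\). Along that set \(\lambda^d=(1-\lambda)^\alpha/\alpha\) cancels the exponential factors, so your route gives the sharper bound \(\lambda_{\min}(\alpha)-1/4\leq\log4/(\alpha b)\). There the hypothesis \(\alpha\geq2\) is used only to compare with the stated constant.

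Your case split at \(\lambda_1\geq1/2\), where \(\lambda_{\min}\leq1/2\) trivially since \(C_{1/2}=[0,1]\), replaces the paper's \(\alpha3^\alpha>4\) check. The paper's route is a little more direct, since it never differentiates the phase function.
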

\begin{proof}
Set
\[
d_0=\frac{\alpha\log(4/3)+\log \alpha}{\log4},
\qquad d=\frac{\lfloor bd_0\rfloor+1}{b}.
\]
Thus \(0<d-d_0\leq1/b\). The decreasing function
\((1-\lambda)^\alpha-\alpha\lambda^d\) is positive at \(1/4\) and negative
at \(1/2\): the latter follows from
\(d\leq d_0+1<\alpha+\log_2 \alpha\), equivalent to \(\alpha3^\alpha>4\).
Let its unique root be \(\rho\in(1/4,1/2)\).
The integer equation \(bm-ak=bd\) has solutions with
\(k,m\to\infty\). Along them \(m-\alpha k=d\) exactly.
The cylinder construction in the preceding proof therefore puts
a transverse full-tail root family in every neighborhood of
\(\rho\), by first choosing \(L\) and then \(k,m\) large.
Lemma~\ref{lem:root-fibres} gives \(\lambda_{\min}(\alpha)\leq\rho\).

At \(1/4\) the comparison function is at most
\((3/4)^\alpha\log4/b\), by \(1-e^{-u}\leq u\).
The absolute value of its derivative on \([1/4,1/2]\) is at least
\(\alpha2^{1-\alpha}\). The mean value theorem gives the stated upper
bound; Corollary~\ref{cor:first-lower} gives the lower one.
\end{proof}

\begin{theorem}\label{thm:punctured-limit}
For every \(\alpha_0\geq2\),
\[
\lim_{\substack{\alpha\to \alpha_0\\\alpha\geq2,\ \alpha\ne \alpha_0}}\lambda_{\min}(\alpha)=\frac14.
\]
\end{theorem}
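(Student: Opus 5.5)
The limit follows by squeezing $\lambda_{\min}(\alpha)$ between two bounds that are already available. Corollary~\ref{cor:first-lower} gives $\lambda_{\min}(\alpha)\geq1/4$ for every $\alpha$, so only an upper estimate is needed. Irrational exponents near $\alpha_0$ contribute exactly $1/4$ by Theorem~\ref{thm:irrational-onset}. Rational exponents near $\alpha_0$ are handled by Theorem~\ref{thm:denominator}, once we know their reduced denominators blow up.

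Concretely, fix $\alpha_0\geq2$ and $\varepsilon>0$. We may restrict to $\alpha\in[2,\alpha_0+1]$, so that
\[
\frac{(3/2)^\alpha\log4}{2\alpha}\leq K:=\frac{(3/2)^{\alpha_0+1}\log4}{4}.
\]
Choose an integer $B$ with $K/B<\varepsilon$. The set $F_B$ of rationals in $[2,\alpha_0+1]$ with reduced denominator at most $B$ is finite. Let $\delta>0$ be smaller than the distance from $\alpha_0$ to every point of $F_B\setminus\{\alpha_0\}$; it does not matter whether $\alpha_0$ itself lies in $F_B$.

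Now take any $\alpha\geq2$ with $0<|\alpha-\alpha_0|<\delta$ (and $\delta\leq1$). If $\alpha$ is irrational, then $\lambda_{\min}(\alpha)=1/4$ by Theorem~\ref{thm:irrational-onset}. If $\alpha=a/b$ is rational in lowest terms, then $\alpha\notin F_B$, so $b>B$. Theorem~\ref{thm:denominator} then gives
\[
0\leq\lambda_{\min}(\alpha)-\frac14\leq\frac{K}{b}<\varepsilon.
\]
This covers integer $\alpha$ as well, since integers have $b=1\leq B$ and are therefore excluded from the punctured neighbourhood. In either case $|\lambda_{\min}(\alpha)-1/4|<\varepsilon$, which proves the limit.

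There is no real obstacle here. The only point to check is that the constant in Theorem~\ref{thm:denominator} is uniformly bounded on a neighbourhood of $\alpha_0$, which is immediate from continuity of $(3/2)^\alpha/\alpha$. The puncture $\alpha\neq\alpha_0$ is essential: if $\alpha_0$ is itself rational, its own value $\lambda_{\min}(\alpha_0)$ need not equal $1/4$.
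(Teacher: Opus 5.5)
Your proof is correct and follows essentially the same argument as the paper. Irrational exponents give $1/4$ by Theorem~\ref{thm:irrational-onset}; finiteness of reduced rationals with bounded denominator in a bounded interval forces the denominators to infinity; and the locally bounded coefficient in Theorem~\ref{thm:denominator} then gives the limit, with your explicit constant $K$ and choice of $\delta$ simply making the paper's ``locally bounded'' remark quantitative.
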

\begin{proof}
Irrational exponents have value \(1/4\).
In a bounded interval there are only finitely many reduced rationals
of bounded denominator. Thus denominators along distinct rational
exponents approaching \(\alpha_0\) tend to infinity.
The coefficient of \(1/b\) in Theorem~\ref{thm:denominator} is
locally bounded, proving the limit.
\end{proof}

It follows that the first Hausdorff phase transition parameter on \([2,\infty)\) is continuous
at precisely those exponents where its value is \(1/4\).
Rationality alone does not force a discontinuity.

\begin{proposition}\label{prop:powers-four}
For every integer \(r\geq1\), \(\lambda_{\min}(4^r)=1/4\).
\end{proposition}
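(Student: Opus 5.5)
The lower bound \(\lambda_{\min}(4^r)\geq1/4\) is Corollary~\ref{cor:first-lower}. For the upper bound I would show that every right neighbourhood \((1/4,1/4+\varepsilon)\) contains a positive-Lebesgue-measure set of parameters with \(\dim_{\mathrm H}(E_{\alpha,\lambda})>0\). To do this I would rerun the near-axis cylinder construction of Theorem~\ref{thm:irrational-parameters} and Theorem~\ref{thm:denominator}, changing the comparison function. The obstruction in Theorem~\ref{thm:denominator} is that \((1-\lambda)^\alpha-\alpha\lambda^d\) need not vanish at \(1/4\) for an admissible \(d\). The special feature of \(\alpha=4^r\) is that \(\alpha\lambda^{r}=(4\lambda)^r\) equals \(1\) exactly at \(\lambda=1/4\), and the phase \(d=r\) is an integer. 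Since \(\alpha\) is an integer, I would take \(m=\alpha k+r\) with \(k\to\infty\), so that \(m-\alpha k=r\) holds exactly.

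Concretely, I would use cylinders whose leading factors are close to \(1\) rather than \(1-\lambda\):
\[
x=\lambda^{k}\bigl(1-\lambda^{L}+\lambda^{L+1}\pi_\lambda(\omega)\bigr),\qquad
y=1-\lambda^{m}\bigl(1-\lambda^{L'}+\lambda^{L'+1}\pi_\lambda(\tau)\bigr)^{\!*},
\]
with prefixes chosen from the binary words \(1^{k+L}0\ldots\) and \(1^{m}\ldots\). The starred factor is arranged by the appropriate binary word so that the normalized \(y\)-contribution is \(v=1-c\lambda^{L'}+O(\lambda^{L'+1})\). The exact placement of digits is routine bookkeeping.

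Dividing the residual \(x^\alpha+y^\alpha-1\) by \(\lambda^{\alpha k}\) and letting \(k\to\infty\), the Taylor identity used in Theorem~\ref{thm:real-powers} gives \(C^1[a,b]\) convergence, uniformly in both addresses, to
\[
u(\lambda)^\alpha-(4\lambda)^r\,v(\lambda),\qquad u=1-\lambda^L+O(\lambda^{L+1}).
\]
As \(L,L'\to\infty\) this tends in \(C^1\) to \(1-(4\lambda)^r\). That function has a simple zero at \(1/4\) with derivative \(-4r\neq0\). Choosing \(L'<L\) makes \(v<u^\alpha\) at \(\lambda=1/4\), so the residual is positive there. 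Hence every tail pair has a unique transverse root, lying in a short interval \([a,b]\subset(1/4,1/4+\varepsilon)\).

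The step I expect to be delicate is placing \([a,b]\) strictly to the right of \(1/4\), uniformly over all tails, while keeping transversality; Lemma~\ref{lem:root-fibres} needs \([a,b]\subset(1/4,1/2)\). The uniform bound on \(|\pi_\lambda|\) handles this. The spread of the roots is \(O(\lambda^{L'})\), while the displacement of the root from \(1/4\) is of order \(c\lambda^{L'}/(4r)\) minus \(O(\lambda^{L})\). Taking \(L\gg L'\) and fixing the leading \(y\)-digit so that the tail term is dominated gives a positive lower margin. Then Lemma~\ref{lem:root-fibres} yields an absolutely continuous root law carried by parameters with \(\dim_{\mathrm H}(E_{\alpha,\lambda})=\log4/\log(1/\lambda)-1>0\). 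Since \(\varepsilon\) is arbitrary, \(\lambda_{\min}(4^r)\leq1/4\).
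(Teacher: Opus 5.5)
Your argument is correct, and its skeleton is the paper's. Both use near-axis cylinders with \(m=\alpha k+r\) exactly, the comparison function \(1-\alpha\lambda^r=1-(4\lambda)^r\) with its transverse zero at \(1/4\), and Lemma~\ref{lem:root-fibres} on a root window just to the right of \(1/4\).

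The difference is how you make the residual positive at \(\lambda=1/4\) for every tail pair.

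The paper leaves the \(y\)-tail entirely free (prefix \(1^m0^L\)) and ties \(L=m+r+2\) to \(k\). At the extreme configuration \(x=\lambda^k\), \(y=1-\lambda^m\) the first-order terms cancel exactly. The sign then comes from the positive curvature term \(\tfrac{\alpha-1}{2}(1-4^{-m})^{\alpha-2}4^{-m}\), which beats the loss \(\alpha4^{-L}=4^{-m}/16\) from the \(x\)-tail.

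You instead fix \(L,L'\) independently of \(k\) and pin a digit of \(y\). The prefix \(1^m0^{L'}1\), with the tail complemented, realizes \(v=1-\lambda^{L'}+\lambda^{L'+1}\pi_\lambda(\tau)\). Then \((1-z)^\alpha\geq1-\alpha z\) gives a normalized residual \(\geq u^\alpha-(4\lambda)^r v\) for every \(k\). At \(1/4\) this is at least \(\tfrac34 4^{-L'}-4^{r-L}\), a first-order margin that does not shrink with \(k\).

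So your version discards the curvature by its sign and needs no second-order cancellation. The paper's version needs no pinned digit and only one scale parameter, at the price of an exact second-order estimate.

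Two slips to fix:
\begin{enumerate}
\item The prefix of \(x\) is \(0^k1^L0\), not \(1^{k+L}0\).
\item \(L'<L\) does not give \(v<u^\alpha\) at \(1/4\) for all tails; with \(L=L'+1\) and the worst tails the inequality fails. You need \(4^{r-L}<\tfrac34 4^{-L'}\), that is, \(L\geq L'+r+1\). Your later requirement \(L\gg L'\) covers this.
\end{enumerate}
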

\begin{proof}
Fix \(\alpha=4^r\), set \(m=\alpha k+r\), \(L=m+r+2\), and use the cylinders
\[
x=\lambda^k(1-\lambda^L+\lambda^L\pi_\lambda(\omega)),
\qquad
y=1-\lambda^m+\lambda^{m+L}\pi_\lambda(\tau).
\]
For fixed \(\alpha,r\), their residual divided by \(\lambda^{\alpha k}\)
converges uniformly in \(C^1\), as \(k\to\infty\), to
\(1-\alpha\lambda^r\) on every compact subinterval of \((0,1/2)\).
The curvature remainder is \(O_\alpha(\lambda^m)\), with derivative
\(O_\alpha(m\lambda^{m-1})\); the tails contribute
\(O_\alpha(\lambda^L)\) and \(O_\alpha(L\lambda^{L-1})\).

At \(\lambda=1/4\), first take both normalized tails at the
endpoint values giving \(x=\lambda^k,y=1-\lambda^m\).
The leading terms cancel since \(\alpha4^{-r}=1\). Taylor's integral
remainder makes the remaining normalized residual at least
\[
\frac{\alpha-1}{2}(1-4^{-m})^{\alpha-2}4^{-m}
>\frac{\alpha-1}{4}4^{-m}.
\]
Here \((\alpha-2)4^{-m}\leq4^{-\alpha}\leq1/256\).
Varying the first tail loses at most
\(\alpha4^{-L}=4^{-m}/16\); varying the second tail only increases
the residual. Thus every tail pair has positive residual at \(1/4\).
For any fixed \(c\in(1/4,1/2)\), uniform \(C^1\) convergence gives
negative residual at \(c\) and negative derivative throughout
\([1/4,c]\), for all sufficiently large \(k\).
Every pair has a unique root in \((1/4,c)\); for fixed \(k\),
compactness keeps all roots in a compact subinterval.
Lemma~\ref{lem:root-fibres} gives positive-dimensional parameters
there. Let \(c\downarrow1/4\) and use Corollary~\ref{cor:first-lower}.
\end{proof}

%%%%%%%%%%%%%%%%%%%%%%%%%%%%%%%%%%%%%%%%%%%%%%%%%%%%%%%%%%%%%%%%%%%
\section{Hausdorff phase transition diagram}\label{sec:last}

We now fix the exponent and develop the Hausdorff phase transition
diagram as the contraction ratio varies. In
Section~\ref{subsec:last-upper}, we prove the uniform bound
\(\lambda_{\max}(\alpha)<(\sqrt3-1)/2\) for every real \(\alpha>1\).
Thus the solution set has positive Hausdorff dimension throughout
\([(\sqrt3-1)/2,1/2]\), and this region extends to the left by
an amount that may depend on \(\alpha\). For \(\alpha=2\), we also separate the
first and last transitions by exhibiting zero-dimensional intervals
after positive dimension first appears.

In Section~\ref{sec:ae}, for each fixed irrational \(\alpha>1\), we prove
\[
\dim_{\mathrm H}(E_{\alpha,\lambda})
=\frac{2\log2}{\log(1/\lambda)}-1
\]
for Lebesgue almost every \(\lambda\in(\sqrt3/4,1/2)\).
This establishes the expected dimension formula in an explicit
parameter range. The exact dimension of the unit-circle intersection
at \(\lambda=1/3\), asked for in~\cite{Jiang2026}, remains open.

\subsection{Regions of positive Hausdorff dimension}\label{subsec:last-upper}

We establish the uniform bound by constructing strict interior
levels whose intersections with the Cantor dust have positive dimension.

The following criterion gives two intervals of derivative ratios for
which every interior level has positive dimension.

\begin{lemma}\label{lem:two-bands}
Let \(0<\lambda<1/2\), and let \(f,g\) be increasing \(C^1\)
functions on \([0,1]\) with positive derivatives.
If all ratios \(g'(t)/f'(s)\), \(s,t\in[0,1]\), are greater than
one and belong to a compact subinterval of either
\begin{equation}\label{eq:two-bands}
\left(\frac1\lambda-1-\lambda,\frac1{1-\lambda}\right)
\quad\text{or}\quad
\left(\frac1\lambda-1,\frac1{1-\lambda-\lambda^2}\right),
\end{equation}
then every strict interior level of \(f+g\) on \([0,1]^2\)
meets \(C_\lambda^2\) in a set of positive Hausdorff dimension.
\end{lemma}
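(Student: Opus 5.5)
The plan is to run a self-similar branching argument directly on the nonlinear level set $\{f(s)+g(t)=u\}$, in the spirit of Lemmas~\ref{lem:interval-cover} and~\ref{lem:strict-returns}. The key observation is that the hypotheses are preserved under passage to cylinders. If $[a,a+\lambda^n]$ and $[b,b+\lambda^n]$ are basic intervals of $C_\lambda$ at the same depth, set
\[
f_w(s)=\frac{f(a+\lambda^n s)-f(a)}{\lambda^n},\qquad g_w(t)=\frac{g(b+\lambda^n t)-g(a)\cdot 0-g(b)}{\lambda^n}.
\]
These are again increasing $C^1$ functions on $[0,1]$. All ratios $g_w'(t)/f_w'(s)$ lie in the same compact set $R$ of ratios, since they are ratios $g'/f'$ at other points. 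The normalized level is $u_w=\lambda^{-n}\bigl(u-f(a)-g(b)\bigr)$.

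So I work in the class $\mathcal F_R$ of pairs $(f,g)$, with $f(0)=g(0)=0$, whose derivative ratios lie in the fixed compact set $R\subset(1,\infty)$. By compactness of $R$ inside one of the open bands~\eqref{eq:two-bands}, I fix a positive margin $\eta$.

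The first step is the first-level combinatorics. For the four children, with offsets $0$ or $1-\lambda$ in each coordinate, the image of the child's ordinary square under $f+g$ is the interval between its lower-left and upper-right corner values. Positivity of both derivatives gives this, as in Lemma~\ref{lem:interval-cover}. For the linear model $s+rt$ these intervals are
\[
A=[0,\lambda(1+r)],\quad B=[1-\lambda,1+\lambda r],\quad C=[r(1-\lambda),r+\lambda],\quad D=[(1-\lambda)(1+r),1+r],
\]
in this order because $r>1$. For general $(f,g)\in\mathcal F_R$ I compare corner differences by the mean value theorem. Each such difference has the form $\int f'+\int g'$ over explicit subintervals, so its sign is controlled by affine inequalities in $r\in R$, with margin $\eta$ times $\min f'$.

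The band conditions should be exactly what makes these consecutive intervals overlap: either $r(1-\lambda)<1$ together with $1/\lambda-1-\lambda<r$, or the second pair. Moreover, the uncovered configuration must return quickly. I would check that in either band, every level lying in a single child's image is mapped by the renormalization $u\mapsto\lambda^{-1}(u-\text{offset})$ into a region where, within a bounded number $N$ of further steps, two incomparable children both contain the level strictly inside their images, with margin $\eta'>0$ depending only on $R$ and $\lambda$. Then, by Lemma~\ref{lem:interval-cover}-type nested choices applied to each branch, every branch contains points of the Cantor dust on the level set.

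The second step handles strict interiority. A strict interior level $u$ of $f+g$ lies in $(0,f(1)+g(1))$. The danger is levels near the extreme corners: there only child $A$ (or only child $D$) is available, and the renormalized level $u/\lambda$ stays small for many steps. Since I only need positive dimension, I first descend along the unique admissible children until the normalized level leaves a fixed neighbourhood of the endpoints. This happens after finitely many steps, because $u/\lambda^k$ grows. From that descendant on, the uniform $N$-step two-branch property applies. A compactness argument over normalized levels in a compact set $B$ of the form appearing in Lemma~\ref{lem:strict-returns} then produces finitely many return-word pairs, a maximal length $M$, and a common margin. Because the class $\mathcal F_R$ is invariant, no smallness of a Lipschitz perturbation is needed: each renormalized pair is again in $\mathcal F_R$, with level back in $B$.

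The third step is the mass distribution. Assigning mass $2^{-j}$ to each generation-$j$ branch, as in Lemma~\ref{lem:strict-returns}, gives Frostman exponent $\log2/(M\log(1/\lambda))$, hence positive Hausdorff dimension.

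The main obstacle is the finite case analysis of the first step. I must verify, for each of the two bands, that every non-branching region returns to a branching region in uniformly boundedly many steps with uniform strict margins, and that the nonlinear corner comparisons respect those margins. I would first carry this out for the linear model $s+rt$, listing which of $A\cap B$, $B\cap C$, $C\cap D$ are nondegenerate and following where the renormalization of the gaps between them lands. I would then transfer the result to $\mathcal F_R$ using the mean-value bounds with the margin $\eta$.
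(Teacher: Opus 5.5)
Your outline has the same architecture as the paper's proof: a renormalization-invariant class, the four child hulls ordered $00,10,01,11$, exclusive regions, escape from the extreme levels, uniform returns, and a binary tree with the mass distribution principle. But the one step that actually uses the hypothesis is left open as ``the main obstacle'', and your guess about how it works is wrong. The bands are not what makes consecutive hulls overlap. $A\cap B$ and $C\cap D$ only need $r>1/\lambda-2$, and $B\cap C$ only needs $r<1/(1-2\lambda)$; both bands lie strictly inside this range.

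Overlap alone cannot give the lemma. Take $\lambda=3/8$ and $r=13/8$, which lies strictly between the two bands $(31/24,8/5)$ and $(5/3,64/31)$. All three overlaps hold. Yet the strict interior level $1$ of $s+rt$ lies only in the hull $B=[5/8,103/64]$, and renormalizes to $(1-5/8)/(3/8)=1$ for the same linear pair. So it meets $C_\lambda^2$ only in the point $(1,0)$. Your claim that every single-child level reaches a branching configuration in boundedly many steps must therefore come from the band inequalities themselves. That derivation is the content of the lemma.

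The paper supplies it by an analysis of exclusive addresses, proved directly for the whole family rather than transferred from the linear model. Such a transfer is not automatic: a pair in your class need not be close to any single linear model, since its local ratio may wander over all of $[L,U]$ from square to square. Take $1<L\leq g'/f'\leq U$.
\begin{itemize}
\item The inequalities $f(\lambda)+g(\lambda)>f(1-\lambda^2)$ and $f(1-\lambda)<f(\lambda^2)+g(\lambda)$ both follow from $L>1/\lambda-1-\lambda$; the inequality $g(1-\lambda)<f(1)+g(\lambda^2)$ follows from $U<1/(1-\lambda-\lambda^2)$.
\item Together they show that an exclusive $10$ can only be followed by $10$, and that $00$ cannot be followed by $11$, with the reflected statements.
\item The constant address $(10)^\infty$, at level $f(1)$, is ruled out by $g(1-\lambda)<f(1)$ in the first band and by $f(\lambda)+g(\lambda)>f(1)$ in the second. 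These two band endpoints are exactly what excludes the example above.
\end{itemize}
Hence only $(00)^\infty$ and $(11)^\infty$, at the extreme levels, avoid a strict overlap forever.

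Your uniformity step also needs repair. Compactness over levels in a fixed set $B$ is not enough, because the return words depend on the renormalized pair, which ranges over your whole class. Moreover, $\mathcal F_R$ is not compact (it is invariant under $(f,g)\mapsto(cf,cg)$) and has no common switch set. The paper fixes this in three moves:
\begin{itemize}
\item It normalizes so that $1\leq f'\leq U/L$ and $L\leq g'\leq U$, which restriction preserves, and works in the uniformly closed family of Lipschitz pairs with these increment bounds and cross-ratio bounds $[L,U]$.
\item It uses the pair-dependent central domain $[\delta,f(1)+g(1)-\delta]$, with $\delta<\lambda(1+L)/(1+\lambda)$, so that all-zero or all-one prefixes carry both children of a strict overlap strictly into central domains.
\item A finite subcover of the compact space of pairs and central levels then yields $M$ and a common margin.
\end{itemize}
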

\begin{proof}
Subtract the values at zero. Choose strict bounds
\(1<L\leq g'(t)/f'(s)\leq U\) in one of the intervals
\eqref{eq:two-bands}. The four child image hulls, ordered as
\(00,10,01,11\), are
\begin{gather*}
[0,f(\lambda)+g(\lambda)],\quad
[f(1-\lambda),f(1)+g(\lambda)],\\
[g(1-\lambda),f(\lambda)+g(1)],\quad
[f(1-\lambda)+g(1-\lambda),f(1)+g(1)].
\end{gather*}
Consecutive hulls overlap strictly: the outer overlaps follow from
\(L>1/\lambda-2\), and the middle one from
\(U<1/(1-2\lambda)\). A level in no two child interiors must
belong to the respective closed regions
\begin{gather*}
[0,f(1-\lambda)],\quad
[f(\lambda)+g(\lambda),g(1-\lambda)],\\
[f(1)+g(\lambda),f(1-\lambda)+g(1-\lambda)],\quad
[f(\lambda)+g(1),f(1)+g(1)].
\end{gather*}
Empty regions are discarded. Restrict to the continuing child,
subtract its lower-corner value, and divide by \(\lambda\).
The derivative bounds are preserved. The inequalities
\begin{align*}
f(\lambda)+g(\lambda)&>f(1-\lambda^2),\\
g(1-\lambda)&<f(1)+g(\lambda^2),\\
f(1-\lambda)&<f(\lambda^2)+g(\lambda)
\end{align*}
follow respectively from
\(L>1/\lambda-1-\lambda\),
\(U<1/(1-\lambda-\lambda^2)\), and the first inequality on \(L\).
They show that a continuing \(10\) must be followed by \(10\),
and that \(00\) cannot be followed by \(11\), as long as the level
avoids strict overlap. Simultaneous reflection gives the corresponding
statements for \(01\) and \(11\).

The constant \(10\) address has level \(f(1)\). This is outside
its closed region: in the first band \(g(1-\lambda)<f(1)\),
whereas in the second band \(f(\lambda)+g(\lambda)>f(1)\).
Reflection excludes the constant \(01\) address. Thus an infinite
address avoiding overlap can only be \((00)^\infty\) or
\((11)^\infty\), at the two extreme levels. Every strict interior
level reaches a strict overlap in finitely many steps.

We need uniform returns to obtain positive dimension. Normalize
initially by \(\min f'=1\); then
\(1\leq f'\leq U/L\) and \(L\leq g'\leq U\).
The closed family of Lipschitz pairs with these individual increment
bounds and cross-variable increment-ratio bounds \([L,U]\) is
compact in the uniform topology. Translated cylinder restrictions,
divided by their geometric scale, preserve this whole family;
their minimum first derivative need not equal one.
All preceding comparisons use increment inequalities and hence
remain valid on the closed family.

Choose \(0<\delta<\lambda(1+L)/(1+\lambda)\).
Use \([\delta,f(1)+g(1)-\delta]\) as the central level domain.
The images of central domains under all-zero prefixes are
\[
[\delta\lambda^j,
 f(\lambda^j)+g(\lambda^j)-\delta\lambda^j],\qquad j\geq0.
\]
Successive intervals overlap strictly by the lower increment bounds
and the choice of \(\delta\). Their interiors reach every positive
level below the upper endpoint of the original central domain;
reflection reaches the remaining interior levels. At a strict overlap,
both children can therefore be brought strictly into their central
domains by finite prefixes. These are two incomparable returning words.
The space of function pairs and their central levels is compact,
and each such pair of strict returns remains valid on an open
neighbourhood. A finite subcover gives a maximum return length
\(M\) and a positive return margin. Iteration produces a separated
binary tree of product cylinders. The cylinder separation
\(1-2\lambda>0\) and the mass distribution principle give dimension
at least \(\log2/(M|\log\lambda|)>0\). An arbitrary interior level
first enters a central domain. This initial finite restriction
does not change positivity of dimension.
\end{proof}

We apply this criterion to actual rectangles and to endpoint models.
Let \([u_-,u_+]\), \([v_-,v_+]\) be positive cylinder hulls of
depths \(p,q\). An actual rectangle satisfies
\begin{equation}\label{eq:uniform-actual}
u_-^\alpha+v_-^\alpha<1<u_+^\alpha+v_+^\alpha,
\end{equation}
and its derivative-ratio interval is
\[
\left[\lambda^{q-p}(v_-/u_+)^{\alpha-1},
      \lambda^{q-p}(v_+/u_-)^{\alpha-1}\right].
\]
An endpoint model, with integers \(j\) and \(k\geq0\), satisfies
\begin{equation}\label{eq:uniform-endpoint}
u_-^\alpha<\alpha\lambda^{j-k\alpha}v_+,
\qquad u_+^\alpha>\alpha\lambda^{j-k\alpha}v_-.
\end{equation}
After reflection of the second coordinate, the power form
\(u^\alpha-\alpha\lambda^{j-k\alpha}v\) has derivative-ratio interval
\[
\left[\lambda^{j-k\alpha+q-p}u_+^{1-\alpha},
      \lambda^{j-k\alpha+q-p}u_-^{1-\alpha}\right],
\]
whereas the zero-equivalent logarithmic form
\(\alpha\log u-\log v-\log \alpha-(j-k\alpha)\log\lambda\) has
first-to-second derivative-ratio interval
\begin{equation}\label{eq:uniform-log-ratio}
\left[\alpha\lambda^{p-q}v_-/u_+,
      \alpha\lambda^{p-q}v_+/u_-\right].
\end{equation}
In each case we may take the reciprocal interval by exchanging
coordinates. A source is accepted when its whole ratio interval
lies strictly above one and inside one band of \eqref{eq:two-bands}.

\begin{theorem}\label{thm:last-real}
For every real \(\alpha>1\),
\[
\lambda_{\max}(\alpha)<\frac{\sqrt3-1}{2}.
\]
\end{theorem}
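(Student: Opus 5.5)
We must show there is $\lambda^*<(\sqrt3-1)/2$ such that $\dim_{\mathrm H}(E_{\alpha,\lambda})>0$ for every $\lambda\in(\lambda^*,1/2]$. The case $\lambda=1/2$ is trivial: $C_{1/2}=[0,1]$, so $E_{\alpha,1/2}$ is an arc. For $\lambda<1/2$ the tool is Lemma~\ref{lem:two-bands}. Put $\lambda_0=(\sqrt3-1)/2$, the root of $1-\lambda-\lambda^2=\lambda$, i.e.\ $2\lambda^2+2\lambda-1=0$. At $\lambda_0$ we have $1/\lambda_0-1=1/(1-\lambda_0-\lambda_0^2)\cdot\lambda_0/\lambda_0$ degenerate in the second band, but the first band $(1/\lambda-1-\lambda,1/(1-\lambda))$ is nondegenerate for all $\lambda$ near $\lambda_0$. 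More importantly, the union of the two bands together with their reciprocals is a fairly wide set of ratios near $1$ once $\lambda$ is close to $1/2$. Both bands shrink toward the point $1$ as $\lambda\to1/2$, since $1/\lambda-1-\lambda\to1/2$ and $1/(1-\lambda)\to2$; so near $1/2$ the first band is $(\approx1/2,\approx2)$ and contains an interval strictly above $1$. The plan is therefore to produce, for each $\lambda$ in a closed interval $[\lambda_1,1/2)$ with $\lambda_1<\lambda_0$, a source: a pair of positive cylinder hulls whose derivative-ratio interval lies strictly above one and inside a band. The source can be either an actual rectangle satisfying \eqref{eq:uniform-actual}, or an endpoint model \eqref{eq:uniform-endpoint} used through the logarithmic form \eqref{eq:uniform-log-ratio}.

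\textbf{Reduction to finitely many cells.} All defining inequalities are strict and continuous in $\lambda$ and $\alpha$. Each accepted source therefore remains valid on an open parameter box. I would then cover the compact region as follows.

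For bounded exponents $1<\alpha\le A$, I would use actual rectangles near the symmetric point $x=y=2^{-1/\alpha}$ of the Fermat arc. There the tangent ratio $(y/x)^{\alpha-1}$ equals $1$. Choosing cylinders with $v$ slightly larger than $u$, and depths $p,q$ with a prescribed difference $q-p$, moves the ratio $\lambda^{q-p}(v/u)^{\alpha-1}$ into, say, $(1,1/(1-\lambda))$. The cylinders are shrunk until the ratio interval is short. The crossing condition \eqref{eq:uniform-actual} is arranged by choosing the cylinder pair whose corner values bracket $1$.

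For large $\alpha$ the arc hugs the axes. I would instead use endpoint models, as in the irrational case of Theorem~\ref{thm:real-powers}: take $x=\lambda^k u$ and $y=1-\lambda^{j}v$. In the logarithmic form the ratio interval is $\alpha\lambda^{p-q}v/u$. The free integer depth offset $p-q$, together with the choice of prefixes, places this ratio in the band for all $\alpha$. The key point is that $\alpha$ enters only through the factor $\alpha\lambda^{p-q}$, so a scaling argument modulo powers of $\lambda$ gives a cover uniform in $\alpha$. This uniformity is what makes it possible to handle unbounded exponents with finitely many cells.

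For each source, Lemma~\ref{lem:two-bands} applies to $f(s)=u(s)^\alpha$ and $g(t)=v(t)^\alpha$, or to their logarithmic or reflected analogues, at the level $1$. This gives positive dimension, after possibly exchanging coordinates.

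\textbf{Main obstacle.} The hard step is making the coverage genuinely uniform in both parameters. This means covering $[\lambda_1,1/2)\times(1,\infty)$ by finitely many open boxes. Two limits need care.

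As $\lambda\to1/2$, the band widths stay bounded below, so that end is fine. Still, the hulls $[u_-,u_+]$ have length $\lambda^p$, and these must be small relative to the band margin.

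As $\alpha\to1^+$, the curve flattens to a line, and the ratio interval tends to $\lambda^{q-p}$. One must then choose $q-p=0$ together with $v/u$ raised to a small power. I would first try to handle $\alpha$ near $1$ with endpoint models, where the factor $\alpha$ itself pushes the ratio above $1$.

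Finally, I would verify that the strict inequalities at the single parameter $\lambda_0$ hold for every $\alpha$ via an explicit one-parameter family of sources. Openness then yields the strict inequality $\lambda_{\max}(\alpha)<\lambda_0$ for each fixed $\alpha$. Note that uniformity in $\alpha$ is not actually needed for the statement.
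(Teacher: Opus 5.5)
Your framework is the paper's. It uses Lemma~\ref{lem:two-bands}, accepted sources of actual-rectangle and endpoint-model type, transfer from endpoint models to the Fermat curve, persistence of strict inequalities, and strictness at $\lambda_0=(\sqrt3-1)/2$ by openness. The gap is the step that carries the content: that an accepted source exists at every $(\alpha,\lambda)$ with $\alpha>1$ and $\lambda_0\le\lambda<1/2$. You assert this, and the mechanism you sketch fails near $\lambda_0$.

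At $\lambda_0$ the bands \eqref{eq:two-bands} are $((\sqrt3+1)/2,(3+\sqrt3)/3)$ and $(\sqrt3,2)$. Neither is degenerate: $\lambda_0$ solves $2\lambda^2+2\lambda-1=0$, whereas $1-\lambda-\lambda^2=\lambda$ gives $\sqrt2-1$. Also, the bands widen rather than shrink as $\lambda\to1/2$. Your target $(1,1/(1-\lambda))$ is not inside a band for $\lambda<\sqrt2-1$, because $1/\lambda-1-\lambda>1$ there. Worse, modulo integer powers of $\lambda$ and inversion, the admissible ratios at $\lambda_0$ form two arcs of log-length about $0.14$ on a circle of length $\log(1+\sqrt3)\approx1.005$. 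Both arcs lie at log-distance at least $0.31$ from the phase of $1$. So a ratio $\lambda^{q-p}(v/u)^{\alpha-1}$ with $(v/u)^{\alpha-1}$ close to $1$, as near the symmetric point, is inadmissible for every choice of $q-p$.

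You must therefore work on parts of the arc a definite distance from $x=y$. There, \eqref{eq:uniform-actual} cannot be arranged simply by picking the right cylinder pair. The depth-$n$ hull squares have total area $(4\lambda^2)^n\to0$. Whether the given curve strictly crosses a hull rectangle in the required direction window, at depths where the ratio interval fits in a band, is exactly what must be proved. The paper has no soft argument for this. For $6/5\le\alpha\le500$ it supplies the sources by exact interval certificates: about $89{,}000$ source rectangles on closed parameter cells, with monotonicity in $\alpha$ passing from cell endpoints to every real exponent. For $1<\alpha\le6/5$ it uses explicit analytic windows.

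The endpoint-model part has the same gap in arithmetic form. In the transfer $x=\lambda^{k+t_i}u$, $y=1-\lambda^{j+r_i}v$, with $\alpha t_i$ near integers, the limiting coefficient is $\alpha\lambda^{j-k\alpha}$ with fixed integers $j,k$. For $\alpha=a/b$, condition \eqref{eq:uniform-endpoint} therefore requires the window $(u_-^\alpha/v_+,\,u_+^\alpha/v_-)$ to contain a point of $\{\alpha\lambda^{n/b}:n\in\mathbb Z\}$. This is a real constraint for small $b$, and especially for integers. Irrational rotation, which you borrow from Theorem~\ref{thm:real-powers}, is unavailable for rational $\alpha$. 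Your scaling remark controls only the ratio $\alpha\lambda^{p-q}v/u$, not the corner signs.

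The missing idea for large $\alpha$ is the paper's phase-circle cover. Write $\alpha=\lambda^{-m}t$ and insert the cylinder $u=u_0+\lambda^{m-h}s$. Then $\alpha\log u-\alpha\log u_0$ is essentially linear in $s$ with rate $t\lambda^{-h}/u_0$, independent of $m$. One then covers the whole circle, modulo $\log(1/\lambda)$, of target phases $\log\alpha-\alpha\log u_0$; monotonicity of $Z\log\frac{Z+A}{Z+B}$ gives uniformity in $m$.

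Near $\alpha=1$, the factor $\alpha$ tends to $1$, so it cannot lift the ratio above a band endpoint exceeding $1.36$. The paper instead tunes $z=\lambda^{j-k\alpha}$ in the window $\max\{1,1/\lambda-1-\lambda\}<z<1/(\alpha(1-\lambda))$. It compares this window's width with the grid spacing $\lambda^{-1/b}$ and handles small denominators separately, with $7/6$ and $6/5$ done by actual rectangles. As written, your proposal reproduces the paper's framework but not the existence of sources, which is where the proof lies.
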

\begin{proof}
We prove positivity for every
\((\sqrt3-1)/2\leq\lambda<1/2\), and then use strictness at the
left endpoint. The proof combines Lemma~\ref{lem:two-bands} with
exact interval certificates; their predicates, finite parameter
covers, and replay files are given in
Appendix~\ref{app:uniform-certificates}.

First, an accepted endpoint model gives an actual Fermat intersection
for every real exponent. Choose positive integers \(t_i\) tending
to infinity with \(\operatorname{dist}(\alpha t_i,\mathbb Z)\to0\),
and nearest integers \(r_i\) to \(\alpha t_i\). For rational \(\alpha\)
take denominator multiples, and for irrational \(\alpha\) use recurrence
modulo one. With
\[
x=\lambda^{k+t_i}u,\qquad y=1-\lambda^{j+r_i}v,
\]
both coordinates belong to \(C_\lambda\) for all sufficiently
large \(i\). Taylor's formula gives, in \(C^1\) on the fixed rectangle,
\[
\lim_{i\to\infty}
\lambda^{-\alpha(k+t_i)}(x^\alpha+y^\alpha-1)
=u^\alpha-\alpha\lambda^{j-k\alpha}v.
\]
For a logarithmic source we instead use the exactly zero-equivalent
expression
\[
\alpha\log u-
\log\left(\frac{1-(1-\lambda^{j+r_i}v)^\alpha}
                   {\lambda^{\alpha(k+t_i)}}\right).
\]
Its \(C^1\) limit is the certified logarithmic form.
The coordinates stay bounded away from zero, so the logarithm and
its derivative are controlled. The strict corner and derivative
inequalities persist for a sufficiently large fixed \(i\).
Lemma~\ref{lem:two-bands} applies to that actual separable function,
and the fixed affine coordinate map preserves Hausdorff dimension.
The logarithmic ratio is used only with the logarithmic expression.

For \(1<\alpha\leq6/5\), Appendix~\ref{app:uniform-near-one}
constructs explicit endpoint coefficient intervals. Their widths
contain a point of the coefficient grid for all sufficiently large
rational denominators; irrational exponents use density. The
remaining small denominators form an explicitly checked finite list.
Only \(7/6\) and \(6/5\) require actual-rectangle covers, with
\(67\) and \(74\) records, respectively. This proves the whole
interval \(1<\alpha\leq6/5\), including its noncompact endpoint at one.

For \(6/5\leq \alpha\leq500\), the following closed rational rectangle
covers verify \eqref{eq:uniform-actual} or
\eqref{eq:uniform-endpoint}, together with the appropriate strict
band inequalities, throughout the indicated parameter ranges:
\[
\begin{array}{c|r|r}
\text{Exponent interval}&\text{Source rectangles}&\text{Checked subrectangles}\\\hline
{[6/5,2]}&152&165\\
{[2,10]}&1905&3513\\
{[10,100]}&13369&18693\\
{[100,250]}&21855&31840\\
{[250,500]}&51959&53067
\end{array}
\]
These are interval covers, not parameter samples. For example,
the logarithmic corner residual in \eqref{eq:uniform-endpoint}
has exponent derivative
\[
\log u+k\log\lambda-1/\alpha<0,
\]
and the logarithm of a power-model ratio has exponent derivative
\(-k\log\lambda-\log u\geq0\). The logarithmic ratio is
proportional to \(\alpha\). Actual corner sums decrease with \(\alpha\),
and the logarithm of an actual ratio is affine in \(\alpha\).
Consequently exact endpoint bounds control every real exponent
in each rectangle. Rational interval arithmetic controls its
entire contraction interval. An independent sweep of all exponent
boundary events verifies that the active contraction intervals
cover \([(\sqrt3-1)/2,1/2]\) without gaps.

For \(\alpha\geq500\), write \(\alpha=\lambda^{-m}t\) with
\(1\leq t<1/\lambda\). Appendix~\ref{app:uniform-tail} verifies
finite cycles of logarithmic source intervals covering the whole
coefficient phase circle. If \(\lambda^{-m}\geq1000\), there are
\(36\) contraction slabs with \(33755\) sources. The remaining
scales can only have \(m=6,7,8,9\); another \(75\) slabs with
\(22859\) sources cover them. The finite-scale comparison in that
appendix is uniform in \(m\), so this establishes an unbounded
real exponent interval. The endpoint transfer above completes
the positivity proof for every \(\alpha>1\).

At \(\lambda=1/2\), the Cantor set is \([0,1]\) and the Fermat
arc has dimension one. Finally, for each fixed \(\alpha>1\), an accepted
source at \(\lambda=(\sqrt3-1)/2\) gives a sufficiently deep
actual rectangle with finitely many strict corner and derivative
inequalities. They persist on a neighbourhood of this contraction
parameter. Combining this neighbourhood with the full interval
already proved gives positivity above a parameter strictly less
than \((\sqrt3-1)/2\), and hence the claimed strict bound.
The neighbourhood may depend on \(\alpha\).
\end{proof}

The uniform bound identifies a common interval of persistent
positive dimension. It remains to understand whether positive dimension
persists from its first appearance. We now examine this question for
the unit circle, \(\alpha=2\).

\begin{theorem}\label{thm:circle-positive-measure}
For every \(0<\varepsilon<1/2-(2-\sqrt3)\), the set
\[
\{\lambda\in(2-\sqrt3,2-\sqrt3+\varepsilon):
                       \dim_{\mathrm H}(E_{2,\lambda})>0\}
\]
has positive Lebesgue measure.
\end{theorem}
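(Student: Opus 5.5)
We follow the template of Theorem~\ref{thm:irrational-parameters}, and of Theorem~\ref{thm:denominator} in particular, since \(\alpha=2\) is rational. The goal is to build, inside \((2-\sqrt3,2-\sqrt3+\varepsilon)\), a family of cylinder coordinates whose residual has a unique transverse root in \(\lambda\) for every tail pair. Lemma~\ref{lem:root-fibres} then produces an absolutely continuous root law carried by parameters where \(\dim_{\mathrm H}(E_{2,\lambda})=\log4/\log(1/\lambda)-1>0\). This exponent is positive because \(2-\sqrt3>1/4\). A nonzero absolutely continuous law cannot be carried by a Lebesgue-null set, which gives the positive-measure conclusion.

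The construction uses the near-axis cylinders
\[
x=\lambda^{k}\bigl(1-\lambda+\lambda^L\pi_\lambda(\omega)\bigr),\qquad
y=1-\lambda^{m}+\lambda^{m+L}\pi_\lambda(\tau),
\]
or the variant with prefix \(1-\lambda^L\) as in Proposition~\ref{prop:powers-four}. Here \(m=2k+d\) with a fixed integer \(d\geq1\), so the phase lattice for \(\alpha=2\) consists of the integers. After dividing by \(\lambda^{2k}\), the residual \(x^2+y^2-1\) converges in \(C^1\), uniformly in the addresses, to
\[
\bigl(1-\lambda+\lambda^L\pi_\lambda(\omega)\bigr)^2-2\lambda^d\bigl(1-\lambda^L\pi_\lambda(\tau)\bigr).
\]
As \(L\to\infty\) this tends to \(\phi_{u,d}(\lambda)=u(\lambda)^2-2\lambda^d\), where \(u\) is the chosen left factor. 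With the single fixed prefix \(1-\lambda\) and integer \(d\), the root of \((1-\lambda)^2=2\lambda^d\) lands only at isolated points, and need not be near \(2-\sqrt3\). We therefore enlarge the family of left factors to \(x=\lambda^k(w(\lambda)+\lambda^{L}\pi_\lambda(\omega))\), where \(w\) is the left endpoint of an arbitrary finite binary word beginning with \(1\). The limiting equation becomes \(w(\lambda)^2=2\lambda^d\), possibly also modified by a finite prefix \(v(\lambda)\) on the \(y\)-side. We then need to choose finite words so that the root \(\rho\) of \(\phi(\lambda)=w(\lambda)^2-2\lambda^d v(\lambda)\) lies in \((2-\sqrt3,2-\sqrt3+\varepsilon)\) and satisfies \(\phi'(\rho)\neq0\).

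The key step is a density claim: the roots of \(w(\lambda)^2=2\lambda^d v(\lambda)\), over finite words \(w,v\) and integers \(d\geq0\), are dense in \((2-\sqrt3,1/2)\). For \(\lambda\) in this range, \(C_\lambda\) contains enough structure that \(C_\lambda/C_\lambda\), restricted to appropriate cylinders, covers an interval. This follows by the interval-covering argument used in the irrational case of Theorem~\ref{thm:real-powers}, namely that \(C_\lambda-qC_\lambda\) is an interval for \(q\) in a range depending on \(\lambda\ge1/3\)-type overlap conditions; for smaller \(\lambda\) one only uses that \(C_\lambda^2/C_\lambda\) contains an interval. Granting this, choose \(\lambda_0\) in the target window and finite words with \(w(\lambda_0)^2/(2v(\lambda_0))\) close to \(\lambda_0^d\). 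An intermediate-value argument in \(\lambda\), applied to the strictly monotone comparison function, then gives a root \(\rho\) near \(\lambda_0\). Transversality holds because \(w(\lambda)^2\) and \(\lambda^d\) vary at different logarithmic rates once \(d\) is large.

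With \(\rho\) fixed, take \(L\) large and then \(k\) large. Uniform \(C^1\) convergence gives a unique root with derivative bounded away from zero, in a compact subinterval of the window, for every tail pair. Lemma~\ref{lem:root-fibres} then finishes the proof. The main obstacle is the density step near the left endpoint \(2-\sqrt3\). There the overlap needed for \(C_\lambda-qC_\lambda\) to be an interval fails, so the approach I would try first is to replace it with a Newhouse thickness argument. The thickness of \(C_\lambda\) is \(\lambda/(1-2\lambda)\), and the square of this exceeds \(1\) precisely when \(\lambda>2-\sqrt3\). This threshold coincides with the endpoint, which strongly suggests that this is the intended mechanism for producing interval ratio sets arbitrarily close to \(2-\sqrt3\).
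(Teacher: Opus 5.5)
Your overall frame (near-axis cylinders with \(m=2k+1\), a unique transverse root for every tail pair, then Lemma~\ref{lem:root-fibres} and absolute continuity) matches the paper. However, the step you call key is unproven, rests on a false computation, and is not needed.

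\begin{itemize}
\item You discard \(w=1-\lambda\), \(v=1\), \(d=1\) because the root of \((1-\lambda)^2=2\lambda^d\) supposedly need not be near \(2-\sqrt3\). For \(d=1\) this equation is \(\lambda^2-4\lambda+1=0\), whose root in \((0,1/2)\) is exactly \(2-\sqrt3\). That is where the threshold comes from (compare Theorem~\ref{thm:circle-onset}).
\item Near \(2-\sqrt3\) only \(d=1\) is possible. For \(d\ge2\) one has \(2\lambda^d v\le2\lambda^2<(1-\lambda)^2\le w^2\). So your transversality reason (``once \(d\) is large'') cannot apply.
\item The thickness mechanism fails. \(\tau(C_\lambda)=\lambda/(1-2\lambda)\) satisfies \(\tau^2>1\) only for \(\lambda>1/3\), and \(\tau=1/\sqrt3\) at \(\lambda=2-\sqrt3\). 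Newhouse's gap lemma therefore says nothing on \((2-\sqrt3,1/3]\).
\item The interval property of \(C-qC\) used in Theorem~\ref{thm:real-powers} is proved only for the middle-third set.
\end{itemize}
The density claim is thus an unsupported assertion exactly where you need it.

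What you are missing is a one-sided sign argument at the endpoint. With \(m=2k+1\), your normalized residual is exactly
\[
\bigl(1-\lambda+\lambda^L\pi_\lambda(\omega)\bigr)^2-2\lambda\bigl(1-\lambda^L\pi_\lambda(\tau)\bigr)+\lambda^{2k+2}\bigl(1-\lambda^L\pi_\lambda(\tau)\bigr)^2 .
\]
At \(\lambda=2-\sqrt3\) the leading part \((1-\lambda)^2-2\lambda\) vanishes. What remains is nonnegative tail terms plus a strictly positive curvature term, so the residual is strictly positive there for every address pair. Its \(\lambda\)-derivative is uniformly close to \(2\lambda-4<0\). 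Hence every root lies strictly to the right of \(2-\sqrt3\), within \(O(\lambda^L+\lambda^{2k})\) of it. For fixed \(k,L\), compactness keeps all roots in a closed subinterval of the window.

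This is the paper's proof. It ties the tail depths to \(k\) (prefix lengths \(3k+1\) and \(4k+1\)), checks strict negativity at \(b=7/25\) and a derivative in \((-4,-3)\), and obtains all roots in \((2-\sqrt3,2-\sqrt3+2b^{2k+1})\). Alternatively, a single fixed word such as \(w=(1-\lambda)(1+\lambda^j)\) already gives a transverse limiting root strictly inside \((2-\sqrt3,2-\sqrt3+C\lambda^j)\). Either way, no density statement is required.
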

\begin{proof}
For an integer \(k\geq2\), consider the two genuine Cantor cylinders
\[
x=(1-\lambda)\lambda^k+\lambda^{3k+1}\pi_\lambda(\omega),
\qquad
y=1-\lambda^{2k+1}+\lambda^{4k+1}\pi_\lambda(\tau).
\]
Dividing \(x^2+y^2-1\) by \(\lambda^{2k}\) gives
\begin{align*}
&-(4\lambda-1-\lambda^2)
 +2\lambda^{2k+1}\bigl((1-\lambda)\pi_\lambda(\omega)
                                      +\pi_\lambda(\tau)\bigr)\\
&\hspace{12mm}
 +\lambda^{4k+2}\pi_\lambda(\omega)^2
 +\lambda^{2k+2}
             \bigl(1-\lambda^{2k}\pi_\lambda(\tau)\bigr)^2 .
\end{align*}
At \(\lambda=2-\sqrt3\) this is strictly positive, uniformly
over the addresses for each fixed \(k\). At \(b=7/25\) it is
strictly negative: the positive terms are at most
\(2b^5(2-b)+b^{10}+b^6\), whereas
\[
4b-1-b^2-2b^5(2-b)-b^{10}-b^6>0.035.
\]
On this parameter interval, differentiating the coding series
gives \(|\partial_\lambda\pi_\lambda|\leq3\).
The absolute derivative of the three positive terms is at most
\[
2(10b^4+7b^5)+10b^9+6b^{10}+6b^5
                         +2b^6(4b^3+3b^4)<0.158 .
\]
Here the bounds for \(k=2\) dominate all \(k\geq2\), as follows
by comparing consecutive powers and their integer factors.
The derivative of the entire residual therefore lies in
\((-4,-3)\). Every address has a unique uniformly transverse
root, and the mean value theorem gives
\[
0<\lambda-(2-\sqrt3)<2b^{2k+1}.
\]
For each fixed \(k\), compactness places all roots in a closed
subinterval of \((2-\sqrt3,2-\sqrt3+2b^{2k+1})\).
Lemma~\ref{lem:root-fibres} applies to these two fixed cylinders.
Their root law is an absolutely continuous probability measure,
and almost every root for that law has positive-dimensional
fibre. Choosing \(k\) sufficiently large proves the assertion.
\end{proof}

\begin{theorem}\label{thm:circle-onset}
The first Hausdorff phase transition for the unit circle is
\(\lambda_{\min}(2)=2-\sqrt3\).
\end{theorem}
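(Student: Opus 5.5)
The plan has two halves. The upper bound \(\lambda_{\min}(2)\leq2-\sqrt3\) comes from Theorem~\ref{thm:circle-positive-measure}. The lower bound \(\lambda_{\min}(2)\geq2-\sqrt3\) means showing \(\dim_{\mathrm H}(E_{2,\lambda})=0\) for every \(\lambda<2-\sqrt3\).

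For the upper bound, Theorem~\ref{thm:circle-positive-measure} gives, for every small \(\varepsilon>0\), a positive-measure (hence nonempty) set of \(\lambda\in(2-\sqrt3,2-\sqrt3+\varepsilon)\) with \(\dim_{\mathrm H}(E_{2,\lambda})>0\). Taking the infimum over \(\varepsilon\) gives \(\lambda_{\min}(2)\leq2-\sqrt3\).

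For the lower bound I would use the result of Jiang, Kong, Li and Wang~\cite{Jiang2026} quoted in the introduction. For \(0<\lambda\leq2-\sqrt3\), the only points of \(E_{2,\lambda}\) are the axis points \((0,1)\) and \((1,0)\). So \(E_{2,\lambda}\) is finite and has Hausdorff dimension zero throughout \((0,2-\sqrt3]\). By the definition of \(\lambda_{\min}\) as an infimum of the positive-dimensional parameters, this gives \(\lambda_{\min}(2)\geq2-\sqrt3\). Note that Corollary~\ref{cor:first-lower} alone only gives \(1/4\), which is weaker, since \(2-\sqrt3\approx0.268>1/4\).

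The main point to check is that the cited triviality result covers the closed range up to and including \(2-\sqrt3\), and that it is stated for the same normalization of \(C_\lambda\). Here \(C_\lambda\) has convex hull \([0,1]\), with first-level intervals \([0,\lambda]\) and \([1-\lambda,1]\). If I wanted a self-contained argument, I would reprove triviality as follows.

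\emph{Reduction.} A nonaxis solution with \(x\) small forces \(y\) close to \(1\), with \(1-y\approx x^2/2\). Zooming by the self-similarity \(x\mapsto\lambda^{-k}x\) and \(1-y\mapsto\lambda^{-2k}(1-y)\) places the problem in a fixed annular window.

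\emph{Gap check.} I would then show that the rescaled parabola \(1-y=x^2/2\), together with its bounded curvature corrections, always falls into a gap of \(C_\lambda\times C_\lambda\) when \(\lambda<2-\sqrt3\).

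\emph{Threshold.} The critical value comes from the relation \(\lambda^2-4\lambda+1=0\), exactly the leading term \(4\lambda-1-\lambda^2\) seen in the proof of Theorem~\ref{thm:circle-positive-measure}. That leading term changes sign at \(2-\sqrt3\), which confirms that this is the threshold.

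Since the paper cites~\cite{Jiang2026} for exactly this statement, I expect the proof to be the two-line combination above, with no hard step.
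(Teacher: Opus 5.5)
Your proof is correct and follows the paper's structure: the triviality $E_{2,\lambda}=\{(0,1),(1,0)\}$ for $0<\lambda\le2-\sqrt3$ gives the lower bound, and Theorem~\ref{thm:circle-positive-measure} gives the upper bound. The only difference is that the paper reproves the triviality in place of citing~\cite{Jiang2026}, and does so more simply than your sketch: once the two coordinates are seen to lie in different first-level intervals, it writes $x=\lambda^kX$ and $1-y=\lambda^mY$ with $X,Y\in(1-\lambda)+\lambda C_\lambda$, which gives the exact identity $X^2=\lambda^{m-2k}Y(2-\lambda^mY)$; the right side is at least $(1-\lambda)(2-\lambda)>1$ when $m\le2k$, and below $2\lambda\le(1-\lambda)^2\le X^2$ when $m\ge2k+1$, so no parabola approximation or gap analysis is needed.
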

\begin{proof}
We prove that the only solutions for \(0<\lambda\leq2-\sqrt3\)
are the two axis points. Both coordinates cannot lie in the
left first-level interval, since \(2\lambda^2<1\), or both in
the right interval, since \(2(1-\lambda)^2>1\).
After interchanging them, a nonaxis solution must have
\[
x=\lambda^k X,\qquad 1-y=\lambda^mY,\qquad
k,m\geq1,\quad X,Y\in(1-\lambda)+\lambda C_\lambda.
\]
The circle equation becomes
\[
X^2=\lambda^{m-2k}Y(2-\lambda^mY).
\]
If \(m-2k\leq0\), its right-hand side is at least
\((1-\lambda)(2-\lambda)>1\).
If \(m-2k\geq1\), it is strictly smaller than \(2\lambda\),
which is at most \((1-\lambda)^2\leq X^2\).
Both alternatives are impossible. This proves the lower bound;
Theorem~\ref{thm:circle-positive-measure} proves the reverse bound.
\end{proof}

\begin{theorem}\label{thm:circle-zero-intervals}
There is a sequence of nonempty open intervals
\(I_j\subset(2-\sqrt3,1/2)\) with
\(\lim_{j\to\infty}\sup I_j=2-\sqrt3\), on each of which
\(E_{2,\lambda}=\{(0,1),(1,0)\}\).
\end{theorem}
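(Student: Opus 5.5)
The plan is to sharpen the digit analysis in the proof of Theorem~\ref{thm:circle-onset}, using the parameter
\[
\delta(\lambda):=4\lambda-1-\lambda^2 .
\]
This quantity vanishes at \(2-\sqrt3\), is positive just above it, and is comparable to \(\lambda-(2-\sqrt3)\). As in that proof, after interchanging coordinates a nonaxis solution has the form \(x=\lambda^kX\), \(1-y=\lambda^mY\) with \(X,Y\in(1-\lambda)+\lambda C_\lambda\), and it satisfies
\[
X^2=\lambda^{m-2k}Y(2-\lambda^mY).
\]

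I first restrict \(\lambda\) to some interval \((2-\sqrt3,\lambda_1)\) with \(\lambda_1\) fixed. Two cases are then impossible.
\begin{itemize}
\item If \(m-2k\leq0\), the bound \((1-\lambda)(2-\lambda)>1\) from that proof still applies.
\item If \(m-2k\geq2\), the right side is below \(2\lambda^2<(1-\lambda)^2\).
\end{itemize}
So only \(m=2k+1\) survives, and then \(X^2=\lambda Y(2-\lambda^{2k+1}Y)\). Because \((1-\lambda)^2\) and \(2\lambda\) differ by \(\delta\), this forces \(X\) close to its minimum \(1-\lambda\) and \(Y\) close to its maximum \(1\).

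Next I parametrise the deviations by digit blocks. I write \(X=(1-\lambda)+\lambda^{a}\widetilde X\), where \(\widetilde X\in(1-\lambda)+\lambda C_\lambda\) (first nonzero digit of the tail at position \(a\)), and \(Y=1-\lambda^{c}\widetilde Y\) similarly. The cases \(X=1-\lambda\) exactly and \(Y=1\) exactly are handled directly. Expanding the equation gives
\[
\delta(\lambda)=2(1-\lambda)\lambda^{a}\widetilde X+\lambda^{2a}\widetilde X^2+2\lambda^{c+1}\widetilde Y+\lambda^{2k+2}Y^2 ,
\]
up to bounded factors, which is the same structure as the residual in Theorem~\ref{thm:circle-positive-measure}. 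The right side is a finite sum of terms of the form \(\lambda^{n}\) times factors in explicit compact ranges. Let \(N=\min(a,c+1,2k+2)\) be the dominant exponent. Then \(\delta(\lambda)\) must lie in a window \(W_N(\lambda)=[\,\underline c\,\lambda^{N},\overline c\,\lambda^{N}\,]\), with constants \(\underline c,\overline c\) uniform in all the remaining digits. Concretely, \(\underline c\) is the smallest leading coefficient and \(\overline c\) is the largest leading coefficient plus the geometric tail of all subdominant terms.

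The intervals are obtained as follows. Since \(\delta\) is increasing while \(\lambda^N\) decreases, each \(N\) gives a parameter interval \(J_N\) of allowed \(\lambda\) near \(2-\sqrt3+\kappa\lambda_0^{N}\). Zero-dimensional (indeed empty) intervals \(I_j\) then appear between \(J_{N+1}\) and \(J_N\), provided
\[
\overline c/\underline c<1/\lambda .
\]
The sup of these gaps tends to \(2-\sqrt3\) as \(N\to\infty\).

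The main obstacle is establishing \(\overline c/\underline c<1/\lambda\) with explicit constants. The factors \(\widetilde X,\widetilde Y\in[1-\lambda,1]\) alone give a ratio \(1/(1-\lambda)\), which is below \(1/\lambda\). However, the several competing terms of comparable exponent (for instance \(a=c+1=2k+2\)) add up, and the subdominant tails must be bounded.

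I would handle this in two steps. First, I would show that the coefficients of the three leading monomials, namely \(2(1-\lambda)\), \(2\) and \(1\) at \(\lambda\approx2-\sqrt3\), combined across all coincidence patterns of \(a\), \(c+1\) and \(2k+2\), produce windows with fixed lattice structure. Then I would show that a specific residue class of \(N\) (for example \(N\) odd, or sufficiently large) leaves a strict gap. If the full union of windows turns out to have no gaps, the fallback is to select only those gaps that survive. This is a finite rational check at \(\lambda=2-\sqrt3\), and it extends to all large \(N\) by continuity and the uniformity of the constants in \(N\).
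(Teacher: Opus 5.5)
Your reduction to \(m=2k+1\) and the identity
\[
\delta=2(1-\lambda)\lambda^{a}\widetilde X+\lambda^{2a}\widetilde X^2+2\lambda^{c+1}\widetilde Y+\lambda^{2k+2}Y^2
\]
are correct; the identity is in fact exact. The window step, however, cannot produce any excluded parameter, and the finite check you postpone would come out negative.

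Once \(\widetilde X,\widetilde Y\) are only known to lie in the hull \([1-\lambda,1]\), consider an even scale \(N\).
\begin{itemize}
\item The configuration \(X=1-\lambda\), \(Y=1\), \(2k+2=N\) gives exactly \(\delta=\lambda^N\).
\item The triple coincidence \(a=c+1=2k+2=N\) reaches about \((5-2\lambda)\lambda^N\approx4.46\,\lambda^N\).
\end{itemize}
So \(\overline c/\underline c>1/\lambda\approx3.73\) already at a single scale.

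The odd scales do not separate the even ones either.
\begin{itemize}
\item The pattern \(a=c+1=N+1\), \(2k+2=N+2\) reaches \((4-\lambda)\lambda^{N+1}=(1+\delta)\lambda^N\), up to \(O(\lambda^{2N+2})\). Since \(4-\lambda=1/\lambda\) exactly at \(2-\sqrt3\), consecutive hull windows touch there and overlap just above it.
\item The bottom of an odd window, \(2(1-\lambda)^2\lambda^{N-1}\approx4.0\,\lambda^N\), lies below the top \((5-2\lambda)\lambda^N\) of the even window beneath it.
\end{itemize}
Consequently, for every \(\lambda\) just above \(2-\sqrt3\), the number \(\delta(\lambda)\) lies in \(W_N\cup W_{N-1}\), where \(N\) is the even integer with \(\lambda^N\leq\delta<\lambda^{N-2}\). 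There are no surviving gaps to select. Restricting to a residue class of \(N\) does not help, because a single \(\lambda\) must avoid the windows of every \(N\) at once.

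The missing idea is to use the Cantor structure of the tails, one digit deeper than your windows see. The paper fixes \(\tau=\delta/\lambda^{2j+2}\in(379/100,381/100)\). Hull bounds then exclude every pattern except the triple coincidence \(a=c+1=2k+2=2j+2\). The other patterns give approximately \(\tau\leq3.54\) or \(\tau\geq4.0\). In the paper this dichotomy is the first gap \(\bigl(\lambda(2-\lambda),(1-\lambda)^2\bigr)\) of \((1-\lambda)C_\lambda+C_\lambda\), used for \(k\geq j+1\), together with the cruder exclusions of \(k<j\) and of non-extreme cylinders.

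In the triple case write \(\widetilde X=(1-\lambda)+\lambda x''\) and \(\widetilde Y=(1-\lambda)+\lambda y''\) with \(x'',y''\in C_\lambda\). Then
\[
\tau=2(1-\lambda)^2+2(1-\lambda)+1+2\lambda\bigl((1-\lambda)x''+y''\bigr)+O(\lambda^{2j+1}).
\]
Since \((1-\lambda)x''+y''\) must avoid that same gap, \(\tau\) is excluded from roughly \((3.78,3.82)\). This is the paper's nested gap inside the cylinder with paired digits \(00\) and then \(11\).

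The paper makes this exact by forcing \(X=1-\lambda+\lambda^{2j+1}u\) and \(Y=1-\lambda^{2j}v\). It then verifies the resulting polynomial inequalities on the explicit container \([2679/10000,67/250]\), uniformly in \(j\geq3\), rather than by continuity at \(2-\sqrt3\).
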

\begin{proof}
For each integer \(j\geq3\), let the endpoints of \(I_j\)
be the roots in \((2-\sqrt3,67/250)\) of
\[
4\lambda-1-\lambda^2=\frac{379}{100}\lambda^{2j+2},
\qquad
4\lambda-1-\lambda^2=\frac{381}{100}\lambda^{2j+2}.
\]
These roots exist and are unique. Indeed, the difference of the
two sides is negative at \(2-\sqrt3\), positive at \(67/250\),
and has derivative at least
\(4-2\lambda-(381/100)8\lambda^7>0\).
The same derivative bound holds for all \(j\geq3\), because
\((2j+4)\lambda^{2j+3}/((2j+2)\lambda^{2j+1})<1\).
The second root is larger than the first.
The defining equation also shows that both roots tend to
\(2-\sqrt3\).

Fix \(\lambda\in I_j\). A putative nonaxis solution again has
\(x=\lambda^kX\), \(1-y=\lambda^mY\), with \(X,Y\) in the
right first-level Cantor cylinder. On the larger container
\([2679/10000,67/250]\), the estimates in the preceding proof
exclude \(m-2k\leq0\); \(m-2k\geq2\) is excluded by
\(2\lambda^2<(1-\lambda)^2\).
Thus \(m=2k+1\), and
\[
X^2-2\lambda Y+\lambda^{2k+2}Y^2=0.
\]
Its minimum on the two outer intervals is
\(\lambda^{2k+2}-(4\lambda-1-\lambda^2)\).
Since \(4\lambda-1-\lambda^2<(381/100)\lambda^{2j+2}\)
and \((381/100)\lambda^2<1\), this excludes \(k<j\).

The same equation forces the more precise cylinders
\[
X=1-\lambda+\lambda^{2j+1}u,\qquad
Y=1-\lambda^{2j}v,\qquad u,v\in C_\lambda.
\]
For if \(X\) is outside the displayed leftmost subcylinder of
the outer Cantor interval, then
\(X\geq(1-\lambda)(1+\lambda^{2j})\), and
\(X^2>(1-\lambda)^2+\lambda^{2j}>2\lambda\).
If \(Y\) is outside the displayed rightmost subcylinder, then
\(1-Y\geq(1-\lambda)\lambda^{2j-1}\), and
\(2\lambda Y<2\lambda-\lambda^{2j}<(1-\lambda)^2\).
The inequalities use
\(2(1-\lambda)^2>1\), \(2(1-\lambda)>1\), and
\((381/100)\lambda^2<1\).

Put \(\tau=(4\lambda-1-\lambda^2)/\lambda^{2j+2}\);
thus \(379/100<\tau<381/100\).
Substitution gives the exact identity
\[
\tau=\frac2\lambda\bigl((1-\lambda)u+v\bigr)
       +\lambda^{2j}u^2
       +\lambda^{2(k-j)}(1-\lambda^{2j}v)^2.
\]
Consequently \((1-\lambda)u+v\) lies between
\[
\frac\lambda2(\tau-\lambda^{2(k-j)})-\frac{\lambda^7}{2}
\quad\hbox{and}\quad
\frac\lambda2(\tau-\lambda^{2(k-j)})+\lambda^7.
\]
The projection \((1-\lambda)C_\lambda+C_\lambda\) has the gap
\[
\bigl(\lambda(2-\lambda),(1-\lambda)^2\bigr).
\]
Inside its cylinder with paired first digits \(00\) and paired
second digits \(11\), it also has the gap
\[
\bigl(\lambda(1-\lambda)(2-\lambda)+\lambda^3(2-\lambda),
      \lambda(1-\lambda)(2-\lambda)+\lambda^2(1-\lambda)^2\bigr).
\]
The latter is a gap of the full projection: it lies below every
other first-level image and above every other second-level image
within the \(00\) image. These assertions follow from
\[
\lambda(1-\lambda)(2-\lambda)+\lambda^2(1-\lambda)^2
 <(1-\lambda)^2,\qquad
(1-\lambda)^2>\lambda(1-\lambda)(2-\lambda).
\]
For \(k\geq j+1\), the candidate interval lies strictly in the
first gap, because
\begin{align*}
\frac\lambda2(379/100-\lambda^2)-\lambda^7/2
   &>\lambda(2-\lambda),\\
381\lambda/200+\lambda^7&<(1-\lambda)^2.
\end{align*}
For \(k=j\), it lies strictly in the second gap, because
\begin{align*}
279\lambda/200-\lambda^7/2
 &>\lambda(1-\lambda)(2-\lambda)+\lambda^3(2-\lambda),\\
281\lambda/200+\lambda^7
 &<\lambda(1-\lambda)(2-\lambda)+\lambda^2(1-\lambda)^2.
\end{align*}
All these polynomial inequalities hold throughout the rational
container specified above; Appendix~\ref{app:certificates}
gives their exact verification. Thus no scale \(k\) permits a
nonaxis solution.
\end{proof}

The zero-dimensional intervals above the first Hausdorff phase
transition give the following separation.
\begin{theorem}\label{thm:known-separation}
For the unit circle,
\[
\lambda_{\min}(2)=2-\sqrt3<\lambda_{\max}(2).
\]
\end{theorem}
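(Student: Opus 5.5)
The separation is assembled from Theorems~\ref{thm:circle-onset} and \ref{thm:circle-zero-intervals}. The equality \(\lambda_{\min}(2)=2-\sqrt3\) is exactly Theorem~\ref{thm:circle-onset}, so only the strict inequality \(2-\sqrt3<\lambda_{\max}(2)\) needs proof. By definition, \(\lambda_{\max}(2)\) is the supremum of the parameters \(\lambda\in(0,1/2]\) with \(\dim_{\mathrm H}(E_{2,\lambda})=0\). It therefore suffices to exhibit one such parameter strictly larger than \(2-\sqrt3\).

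For this I take any \(j\geq3\) and the nonempty open interval \(I_j\subset(2-\sqrt3,1/2)\) supplied by Theorem~\ref{thm:circle-zero-intervals}. Pick any \(\lambda\in I_j\). That theorem gives \(E_{2,\lambda}=\{(0,1),(1,0)\}\). This set is finite, so its Hausdorff dimension is zero, and \(\lambda\) belongs to the set whose supremum defines \(\lambda_{\max}(2)\). Hence \(\lambda_{\max}(2)\geq\lambda>2-\sqrt3\), because \(I_j\) lies strictly to the right of \(2-\sqrt3\).

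Note that \(\lambda_{\max}(2)\geq\sup I_j\) for every \(j\), but these suprema tend to \(2-\sqrt3\). So the strictness comes from the fact that each individual \(I_j\) lies strictly above \(2-\sqrt3\), not from the limit. Besides citing the two earlier theorems, the only point to check is that membership in the defining set of \(\lambda_{\max}\) requires only zero dimension, and a two-point set qualifies. Optionally, Theorem~\ref{thm:circle-positive-measure} shows that positive-dimensional parameters occur arbitrarily close to \(2-\sqrt3\) from above, on both sides of the zero intervals. This makes clear that the phase is not monotone in \(\lambda\), but it is not needed for the stated inequality.
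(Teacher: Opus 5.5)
Your proposal is correct and is exactly the paper's argument. Theorem~\ref{thm:circle-onset} gives the equality, and any parameter in a single interval $I_j$ of Theorem~\ref{thm:circle-zero-intervals} is a zero-dimensional parameter strictly above $2-\sqrt3$, which the definition of $\lambda_{\max}(2)$ turns into the strict inequality. (A small point on your optional aside, which does not affect the proof: Theorem~\ref{thm:circle-positive-measure} as stated only supplies positive-dimensional parameters to the left of a given $I_j$; those to its right come from, e.g., Theorem~\ref{thm:last-real}.)
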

\begin{proof}
Theorem~\ref{thm:circle-onset} gives the equality. Every parameter
in any interval supplied by Theorem~\ref{thm:circle-zero-intervals}
is greater than \(2-\sqrt3\) and has a zero-dimensional intersection.
The definition of \(\lambda_{\max}(2)\) gives the strict inequality.
\end{proof}

Thus the unit circle has positive-dimensional intersections followed
by zero-dimensional intersections, before positive dimension persists
near \(1/2\). This establishes non-monotonicity without requiring
the Hausdorff phase transitions to be isolated.
Together with Theorem~\ref{thm:punctured-limit}, the exact value
\(\lambda_{\min}(2)=2-\sqrt3\) also shows that the first Hausdorff
phase transition parameter is discontinuous as a function of the
exponent at \(\alpha=2\).

\subsection{Almost-everywhere dimension}\label{sec:ae}

Lemma~\ref{lem:root-fibres} gives equality for almost every parameter
under a root law. Absolute continuity alone does not turn that into
Lebesgue almost everywhere on an interval. We obtain the stronger
quantifier by constructing small root families near a density point,
with quantitative control of their densities.

The next estimate uses transversality and parameter-averaged Fourier
bounds, methods developed for Bernoulli convolutions and parameterised
projections in~\cite{Solomyak1995,PeresSchlag2000}. We give the proof
for the digit distribution and Sobolev range needed here.

\begin{lemma}\label{lem:average-sobolev}
Let \([a,b]\subset(3/8,1/2)\) be sufficiently short that
\[
0<2\sigma<\frac{2\log(8/3)}{\log(b/a^3)}-1.
\]
On every compact subinterval of \((a,b)\), the squared
\(H^\sigma\) norm of \(\mu_\lambda*\mu_\lambda\) is integrable
with respect to \(\lambda\).
\end{lemma}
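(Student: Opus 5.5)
We follow the Peres--Solomyak transversality scheme. Write \(\nu_\lambda:=\mu_\lambda*\mu_\lambda\). Since \(\mu_\lambda\) is the law of \((1-\lambda)\sum_i\omega_i\lambda^i\), the convolution \(\nu_\lambda\) is the law of \((1-\lambda)\sum_i d_i\lambda^i\) with i.i.d.\ digits \(d_i\in\{0,1,2\}\) of weights \(1/4,1/2,1/4\). It is also the self-similar measure with four equal-weight maps and contraction \(\lambda\). For \(\sigma>0\),
\[
\int_0^\infty (1+\xi^2)^{\sigma}|\widehat{\nu_\lambda}(\xi)|^2\,d\xi
\asymp \iint |z-w|^{-1-2\sigma}\,d\nu_\lambda(z)\,d\nu_\lambda(w),
\]
which holds for \(0<2\sigma<1\) as the standard energy identity. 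The left side is the squared \(H^\sigma\) norm up to the finite \(L^2\) part, and that part is controlled by the energy as well. Fix a compact \([a',b']\subset(a,b)\) and a smooth cutoff \(\phi\geq0\) supported in \((a,b)\) with \(\phi=1\) on \([a',b']\). It suffices to bound
\[
\int\phi(\lambda)\iint|z-w|^{-1-2\sigma}\,d\nu_\lambda\,d\nu_\lambda\,d\lambda
=\int\phi(\lambda)\iint\bigl|\pi(\mathbf d,\lambda)-\pi(\mathbf e,\lambda)\bigr|^{-1-2\sigma}\,dP(\mathbf d)\,dP(\mathbf e)\,d\lambda ,
\]
where \(P\) is the digit product measure.

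We split according to the first index \(n\) at which \(\mathbf d\) and \(\mathbf e\) differ. The pair differs first at \(n\) with probability at most \(1\), and the difference is \(\lambda^n(1-\lambda)g(\lambda)\), with
\[
g(\lambda)=\sum_{i\geq0}c_i\lambda^i,\qquad c_0\in\{\pm1,\pm2\},\qquad c_i\in\{-2,\dots,2\}.
\]
A single distance estimate gives too crude an exponent. We therefore refine by grouping digits. The pairs \((d_i,e_i)\) at a coincidence position contribute weight \(\sum_d p_d^2=3/8\). This is exactly where the constant \(8/3\) enters: the probability of agreeing for \(n\) steps is \((3/8)^n\). The term for first disagreement at \(n\) is bounded by
\[
(3/8)^n\,\lambda^{-n(1+2\sigma)}\sup_{g}\int\phi(\lambda)\,|g(\lambda)|^{-1-2\sigma}\,d\lambda ,
\]
where the supremum runs over the class of power series above.

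The first factor sums if \((3/8)a^{-(1+2\sigma)}<1\). The stated condition involves \(\log(b/a^3)\), which suggests the intended bound is cruder. In that bound the \(\lambda\)-integral of \(|g|^{-1-2\sigma}\) is not uniformly finite, because \(1+2\sigma>1\). It must instead be handled through a sublevel-set estimate
\[
|\{\lambda\in[a,b]:|g(\lambda)|<\rho\}|\lesssim\rho ,
\]
combined with the lower bound \(|g|\gtrsim\lambda^{m}\)-type bounds on scales where transversality degrades. This is where \(b/a^3\) appears: comparing \(\lambda^{-n}\) over the short interval costs \((b/a)^n\), and an extra \(a^{-2n}\) comes from the energy exponent. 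The summability condition then becomes \((3/8)(b/a^3)^{(1+2\sigma)/2}<1\), which is the hypothesis. In practice we would estimate the energy at scale \(r\) by counting pairs with \(|z-w|<r\), obtaining
\[
\int\phi\,\nu_\lambda\otimes\nu_\lambda\{|z-w|<r\}\,d\lambda\lesssim r^{1+2\sigma'}
\]
for some \(\sigma'>\sigma\), and then integrate in \(r\).

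The main obstacle is transversality of the class of \(g\) on \([a,b]\subset(3/8,1/2)\). We need \(|g(\lambda)|<\delta\Rightarrow|g'(\lambda)|>\delta\) for a uniform \(\delta>0\), with digit differences in \(\{-2,\dots,2\}\). Digit sets of width \(5\) have known transversality thresholds below \(1/2\) only under restrictions, so this may fail outright. We would first test the \(\delta\)-transversality via a Solomyak-type \((*)\)-function check on \((3/8,1/2)\). If that fails, the fallback is to avoid full transversality. We would use the splitting of Lemma~\ref{lem:linear-regularity} instead: the factor from positions divisible by \(M\) carries the parameter-averaged Fourier decay (the Erd\H{o}s--Kahane averaging in \cite[Proposition~2.3]{Shmerkin2014} holds in \(L^2(d\lambda)\)), while the remaining factor has an \(L^2\) density on average by the transversality-free counting above. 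Multiplying the two Fourier transforms and integrating in \(\lambda\) then yields the claimed integrability.
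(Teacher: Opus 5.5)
Your reduction rests on a false identity. For \(s=1+2\sigma>1\), the Riesz energy \(\iint|z-w|^{-s}\,d\nu(z)\,d\nu(w)\) is infinite for \emph{every} nonzero finite measure on \(\mathbb R\). Finite \(s\)-energy forces \(\nu(B(x,r))\lesssim r^s\) on a set of positive mass, which is impossible on the line when \(s>1\). The Fourier formula \(I_s(\nu)=c_s\int|\xi|^{s-1}|\widehat\nu(\xi)|^2\,d\xi\) holds only for \(0<s<1\), that is, for negative Sobolev orders. So the integral you set out to bound is identically \(+\infty\). Concretely, \(\int\phi|g|^{-1-2\sigma}\,d\lambda=\infty\) whenever \(g\) vanishes where \(\phi>0\), and for \(\lambda>1/3\) this happens on a positive-measure set of digit pairs.

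Your proposed repair, \(\int\phi\,\nu_\lambda\otimes\nu_\lambda\{|z-w|<r\}\,d\lambda\lesssim r^{1+2\sigma'}\), is also impossible. Splitting the support into \(O(1/r)\) intervals of length \(r/2\) and applying Cauchy--Schwarz gives \(\nu\otimes\nu\{|z-w|<r\}\geq cr\) for every probability measure of bounded support. Transversality with sublevel-set counting yields at best this \(O(r)\) bound, i.e.\ the \(\sigma=0\) statement of an \(L^2\) density on average. A positive Sobolev order needs a cancellation mechanism that uses second derivatives of the difference function, and your plan contains none. Your explanation of the factor \(b/a^3\) is reverse-engineered from the hypothesis rather than derived. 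The fallback through Lemma~\ref{lem:linear-regularity} does not rescue this: the Erd\H{o}s--Kahane and Shmerkin inputs are exceptional-set statements with parameter-dependent constants, so they give no \(L^1(d\lambda)\) bound on \(\|\nu_\lambda\|_{H^\sigma}^2\).

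Nor is the fallback needed, since transversality holds on all of \((0,1/2)\). For \(f(x)=1+\sum_{i\geq1}u_ix^i\) with \(|u_i|\leq2\), which covers leading digit differences \(\pm1,\pm2\) after normalization, \eqref{eq:transversality} gives \(f'-(2/x)f<0\). Hence \(f'\) is uniformly negative wherever \(|f|\) is small.

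The missing idea is to estimate each frequency annulus \(|\xi|\sim R\) directly, using cancellation. Pair \(\psi(\lambda)\) with \(Rh(RQ(\lambda))\). Here \(Q\) is the actual difference of two digit sequences first disagreeing at \(j\), and \(h\) is the inverse Fourier transform of a cutoff supported away from zero, so \(\int h=0\). On the band \(|Q|<\varepsilon a^j/(j+1)\) one has \(|Q'|\geq ca^j\) with fixed sign and \(|Q''|\leq C(j+1)^2b^j\). After substituting \(u=Q(\lambda)\), the inverse density \(\psi\chi/|Q'|\) has derivative at most \(C(j+1)^C(b/a^3)^j\). The worst term is \(Q''/|Q'|^3\), and this is the real source of \(b/a^3\). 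Subtracting its value at \(0\) and using \(\int h=0\), with Schwartz decay off the band, gives \eqref{eq:pair-annulus}.

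Now weight by the agreement probability \((3/8)^j\) and split at \(j\approx2\log R/\log(b/a^3)\). This bounds the averaged annulus by \((1+\log R)^CR^{-(2\log(8/3)/\log(b/a^3)-1)}\). Multiplying by \(R^{2\sigma}\) and summing over dyadic \(R\) converges exactly under the stated hypothesis.
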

\begin{proof}
Apart from the factor \(1-\lambda\), this self-convolution is
the distribution of \(\sum_{j\geq0}D_j\lambda^j\), where
\(D_j=0,1,2\) with probabilities \(1/4,1/2,1/4\).
For a normalized difference series
\(f(x)=1+\sum_{i\geq1}u_i x^i\), \(|u_i|\leq2\),
\begin{equation}
f'(x)-\frac2x f(x)
\leq-\frac2x+2+\frac{2x^2}{(1-x)^2}<0
\quad(0<x<1/2).
\label{eq:transversality}
\end{equation}
The last sign follows after multiplication by \(x(1-x)^2/2\):
the resulting polynomial \(2x^3-3x^2+3x-1\) increases to zero
at \(1/2\). Thus \(f'\) is uniformly negative wherever \(|f|\)
is sufficiently small on \([a,b]\).

Fix two digit sequences whose first difference is at \(j\), and
let \(Q(\lambda)\) be their actual difference, including the
factor \(1-\lambda\). It is a nonzero digit difference times
\((1-\lambda)\lambda^j f(\lambda)\).
For a sufficiently small fixed \(\varepsilon>0\), on the band
\begin{equation}
|Q(\lambda)|<\frac{\varepsilon a^j}{j+1}
\label{eq:small-value-band}
\end{equation}
we have
\[
|Q'|\geq c a^j,\qquad |Q''|\leq C(j+1)^2b^j,
\]
and \(Q'\) has one fixed sign there. Indeed, the derivative of
the prefactor contributes at most \(C(j+1)|f|\), which is made
smaller than the strict bound from \eqref{eq:transversality}
by \eqref{eq:small-value-band}. This factor \(1/(j+1)\) is needed
for a bound uniform in \(j\).

Let \(\psi\) be smooth with compact support in \((a,b)\), and
let \(\Phi\geq0\) be a smooth frequency cutoff supported away
from zero. Its inverse Fourier transform \(h\) is Schwartz and
\(\int h=0\). We claim that for \(R\geq1\)
\begin{equation}
\left|\int\psi(\lambda)R h(RQ(\lambda))\,d\lambda\right|
\leq C\min\{R,\,(j+1)^C R^{-1}(b/a^3)^j\}.
\label{eq:pair-annulus}
\end{equation}
The first bound is immediate. For the second, insert a smooth
cutoff in \eqref{eq:small-value-band}. The fixed derivative sign
prevents leaving and reentering this band in the opposite
direction. Changing variables \(u=Q(\lambda)\) gives a compactly
supported inverse density
\(\psi(\lambda(u))\chi(u)/|Q'(\lambda(u))|\).
Its derivative is at most \(C(j+1)^C(b/a^3)^j\), by the two
derivative bounds and the cutoff scale.
Subtract its value at zero and use \(\int h=0\); this gives the
second bound in \eqref{eq:pair-annulus} on the cutoff part.
Off that part, \(|Q|\geq c a^j/(j+1)\), and the Schwartz bound
of order two gives \(C(j+1)^2R^{-1}a^{-2j}\).
When \(Ra^j/(j+1)<1\), the trivial \(CR\) bound is no larger
than the same expression. This proves \eqref{eq:pair-annulus}.

For two independent digit sequences, the probability of agreement
at one position is \(3/8\). Fourier inversion and Fubini therefore
bound the parameter-averaged frequency annulus by
\[
C\sum_{j\geq0}(3/8)^j
\min\{R,(j+1)^C R^{-1}(b/a^3)^j\}.
\]
Split at the integer nearest \(2\log R/\log(b/a^3)\).
The two geometric sums are at most
\[
C(1+\log R)^C
R^{-\{2\log(8/3)/\log(b/a^3)-1\}}.
\]
Multiplying by \(R^{2\sigma}\) and summing over dyadic \(R\)
proves the assertion. The bounded low frequencies cause no issue.
\end{proof}

By Fourier Cauchy--Schwarz and a change of variables, for \(r\)
in a fixed compact subset of \((0,\infty)\),
\begin{equation}
\|\mu_\lambda*(S_r\mu_\lambda)\|_{H^\sigma}
\leq C M(\lambda),\qquad
M(\lambda)^2:=1+\|\mu_\lambda*\mu_\lambda\|_{H^\sigma}^2.
\label{eq:common-sobolev-bound}
\end{equation}
Here \(S_r\) is multiplication by \(r\). Indeed, the squared norm
is the integral of
\[
(1+|\xi|^2)^\sigma|\widehat\mu_\lambda(\xi)|^2
|\widehat\mu_\lambda(r\xi)|^2,
\]
bounded by the geometric mean
of the two self-convolution integrals.
The same function \(M\) controls every such slope.

\begin{theorem}\label{thm:ae-dimension}
For every fixed irrational \(\alpha>1\), for Lebesgue almost every
\(\lambda\in(\sqrt3/4,1/2)\),
\[
\dim_{\mathrm H}(E_{\alpha,\lambda})=\frac{2\log2}{\log(1/\lambda)}-1.
\]
\end{theorem}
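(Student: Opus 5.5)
The upper bound is Theorem~\ref{thm:dimension-upper}, so only the lower bound \(\dim_{\mathrm H}(E_{\alpha,\lambda})\geq 2\log2/\log(1/\lambda)-1\) for Lebesgue almost every \(\lambda\in(\sqrt3/4,1/2)\) needs proof. I would argue by contradiction with the Lebesgue density theorem. Suppose the failure set \(F\) has positive outer measure. Take a density point \(\lambda_0\in(\sqrt3/4,1/2)\) of \(F\). Since \(\sqrt3/4>3/8\), a short interval \([a,b]\ni\lambda_0\) satisfies the hypothesis of Lemma~\ref{lem:average-sobolev} for some \(\sigma>0\). Indeed, \(b/a^3\) is close to \(1/\lambda_0^2<16/3\), and \(2\log(8/3)/\log(16/3)>1\); this is exactly why the threshold is \(\sqrt3/4\). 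Hence \(M(\lambda)^2\) is integrable on \([a,b]\). By Markov's inequality, \(M\leq K\) on a subset \(G\subset[a,b]\) whose complement has arbitrarily small measure. Combined with the density point, \(F\cap G\) fills almost all of every sufficiently small interval around \(\lambda_0\).

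Next I would build root families localized near \(\lambda_0\) at arbitrarily small scales \(\varepsilon\). I would follow Theorem~\ref{thm:irrational-parameters}: use irrational rotation to choose \(k_i,m_i\) with \(m_i-\alpha k_i\to d\), where \(d\) is defined by \(\lambda_0\). This gives cylinders \(x=\lambda^{k}(1-\lambda+\lambda^L\pi_\lambda(\omega))\) and \(y=1-\lambda^m+\lambda^{m+L}\pi_\lambda(\tau)\), whose roots are unique and transverse and lie in an interval of length \(\asymp\lambda_0^{L}\) around (a point near) \(\lambda_0\). The point is that the root law is essentially an affine image of \(\mu_{\lambda}*S_r\mu_\lambda\), rescaled by \(\lambda^L\). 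Here \(r\) is the ratio of the two linear coefficients in \eqref{eq:root-kernel}, which stays in a fixed compact set. I would then rerun the kernel construction in Lemma~\ref{lem:root-fibres}. The difference is that the Sobolev input now comes from \eqref{eq:common-sobolev-bound}, uniformly as \(C M(\lambda_w)\leq CK\) at representatives \(\lambda_w\in G\), rather than from Lemma~\ref{lem:linear-regularity} with a nonuniform constant. This should give a root density \(g_\varepsilon\) on an interval \(J_\varepsilon\) of length \(\asymp\varepsilon\) around \(\lambda_0\) with \(\|g_\varepsilon\|_2^2\leq C_K/|J_\varepsilon|\), where \(C_K\) is independent of \(\varepsilon\). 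The conclusion of Lemma~\ref{lem:root-fibres} holds \(g_\varepsilon\)-almost everywhere, so \(g_\varepsilon\) vanishes a.e. on \(F\).

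To finish, I would use a second-moment argument. The root law is a probability measure on \(J_\varepsilon\) with \(L^2\) norm squared at most \(C_K/|J_\varepsilon|\). By Cauchy--Schwarz, its support has Lebesgue measure at least \(|J_\varepsilon|/C_K\). That support is essentially disjoint from \(F\), so \(|J_\varepsilon\setminus F|\geq|J_\varepsilon|/C_K\) for all small \(\varepsilon\). This contradicts the density of \(F\) at \(\lambda_0\). The centring of the roots can be shifted within \(O(\varepsilon)\) by choosing \(d\) appropriately, and the contradiction needs only a fixed proportion.

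The main obstacle is the uniformity in the middle step. The root kernels are frozen at parameters \(\lambda_w\) that vary with the prefix, and \(M\) is only integrable, not bounded. The representatives must therefore be chosen in \(G\), and the deleted mass of prefixes not meeting \(G\) must be controlled in the telescoping argument. I would handle this by averaging: restrict the address measure to those addresses whose successive roots land in \(G\). Integrability of \(M^2\) with respect to the absolutely continuous root law, whose density is itself bounded in \(L^2\) by a bootstrap, then makes the discarded mass small uniformly in \(\varepsilon\). The interpolation estimate \eqref{eq:kernel-interpolation} and the coupling bounds from Lemma~\ref{lem:root-fibres} carry over unchanged.
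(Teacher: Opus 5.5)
\textbf{The gap.} Your overall architecture matches the paper's: a density-point contradiction, centred cylinders \eqref{eq:centered-cylinders} from irrational rotation, Lemma~\ref{lem:root-fibres} for the root law, and Cauchy--Schwarz against the first kernel. The problem is the step you flag yourself, and your proposed fix is circular.

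Your representatives are roots lying in \(G=\{M\le K\}\). So you must discard every address whose root lies in the open set \(G^c\), and the discarded mass is exactly \(\nu(G^c)\). At a density point of \(G\) you know only \(|G^c\cap J_\varepsilon|=o(\varepsilon)\). Turning this into \(\nu(G^c)=o(1)\) needs an a priori density bound for the \emph{full} root law \(\nu\), which is what the construction is supposed to produce. The construction only gives a one-sided bound \(\nu_G\le h_0+e\) on the retained part, and that says nothing about the discarded part.

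The only a priori information on \(\nu\) is the Frostman bound \eqref{eq:root-small-ball}, with exponent \(d<1\). It cannot control an open set of small measure made of many short intervals. Even an \(L^2\) density would not make \(M^2\) integrable against \(\nu\). Lemma~\ref{lem:average-sobolev} gives only \(M^2\in L^1_{\mathrm{loc}}\), so you would need a bounded density or \(M\in L^4\).

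\textbf{The threshold.} Your explanation of \(\sqrt3/4\) is wrong, and this pinpoints where the difficulty lies. Lemma~\ref{lem:average-sobolev} needs only \(\lambda_0>3/8\); your own computation shows \(2\log(8/3)/\log(b/a^3)>1\) throughout that range. The paper never discards prefixes. It freezes at auxiliary points \(\lambda_w\in I_w\) that need not be roots, which is why the Newton constant appears in \eqref{eq:newton-kernel}. These points are chosen with \(M(\lambda_w)^2\) at most twice its average over \(I_w\). They are then controlled only on average via \eqref{eq:root-overlap}, which costs the factor \(b^{-j(1-d)}\) in \eqref{eq:average-freezing}.

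Beating this loss with the coupling gain requires \((b^2/a)^{2\sigma}<b^{1-d}\), i.e.\ \(2\sigma>1-\log2/\log(1/\lambda_0)\). Combined with the upper bound on \(2\sigma\) from Lemma~\ref{lem:average-sobolev}, this is compatible exactly when \(\lambda_0>\sqrt3/4\). A scheme with uniform Sobolev bounds and negligible discarded mass would run on all of \((3/8,1/2)\). So the entire difficulty sits in the step you defer to a ``bootstrap''.

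\textbf{A possible repair.} If you want to pursue your route, two ingredients appear necessary. First, use auxiliary non-root freezing points. Second, discard a child \(wi\) only when an interval of length \(O(\rho b^{j+1})\) containing both its roots and the support of the parent's frozen split kernel lies inside \(G^c\). Each discarded piece is then a nonnegative part of the current mixture supported in \(G^c\). Telescoping dominates the sum of these pieces by \(h_0\) plus the replacement errors, so the discarded mass is at most \(\|h_0\|_2\,|G^c\cap J|^{1/2}+o(1)\). That is not the argument you give, and it would need to be checked in detail.
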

\begin{proof}
Choose a short interval \([a,b]\) inside
\((\sqrt3/4,1/2)\). Put \(d=\log2/\log(1/a)\).
It can be chosen, together with \(\sigma>0\), so that
\begin{equation}
b^2/a<1,\qquad
2\sigma<\frac{2\log(8/3)}{\log(b/a^3)}-1,\qquad
(b^2/a)^{2\sigma}<b^{1-d}.
\label{eq:ae-exponents}
\end{equation}
As \(a,b\to\lambda_0\), the permitted inequalities for \(2\sigma\)
are
\[
1-\frac{\log2}{\log(1/\lambda_0)}
<2\sigma<
\frac{\log(8/3)}{\log(1/\lambda_0)}-1.
\]
They are compatible exactly when \(\lambda_0>\sqrt3/4\).
Shrink inside \((a,b)\) so Lemma~\ref{lem:average-sobolev}
gives a locally integrable \(M^2\) on the whole neighborhood
used below.

Fix a point \(\lambda_0\) there at which \(M^2\) has finite
Lebesgue value. For integers \(L\to\infty\), put
\(\rho=\lambda_0^L\).
Irrationality supplies arbitrarily large positive \(k,m\) with
\[
\left|m-\alpha k+\frac{\log \alpha}{\log\lambda_0}\right|\leq\rho^2,
\qquad m^2b^m\leq\rho^3,\qquad k\geq L.
\]
Take the genuine cylinders
\begin{equation}
x=\lambda^k(1-\lambda^L+\lambda^L\pi_\lambda(\omega)),
\qquad y=1-\lambda^m+\lambda^{m+L}\pi_\lambda(\tau).
\label{eq:centered-cylinders}
\end{equation}
Their residual divided by \(\lambda^{\alpha k}\) is
\begin{equation}
(1-\lambda^L+\lambda^L\pi_\lambda(\omega))^\alpha
-\alpha\lambda^{m-\alpha k}(1-\lambda^L\pi_\lambda(\tau))
+O_\alpha(\lambda^m).
\label{eq:centered-residual}
\end{equation}
The remainder and its first two parameter derivatives are
uniformly negligible by the choice of \(m\).
The limiting function \(1-\alpha\lambda^{-\log \alpha/\log\lambda_0}\)
vanishes transversely at \(\lambda_0\), with negative derivative.
On a window of radius \(C\rho\), the residual's first derivative
is bounded away from zero, its second derivative is bounded,
and its two tail derivatives are positive and comparable to
\(\rho\). Parameter derivatives of tail coefficients introduce
only polynomial factors in \(L\), since \(L^2\rho\to0\).
Choosing \(C\) sufficiently large gives opposite endpoint signs
for all tails. Hence all pairs have unique roots in this window.
Call their map \(f\), and their probability law \(\nu\).

Shared depth-\(j\) prefixes give root displacement at most
\(C\rho b^j\). There is also the small-ball estimate
\begin{equation}
\nu(I)\leq C(|I|/\rho)^d.
\label{eq:root-small-ball}
\end{equation}
To verify it, fix the second address. At a common parameter,
two first addresses first differing at position \(j\) are
separated by at least \(c a^j\). The positive first-tail
derivative and bounded parameter derivative of
\eqref{eq:centered-residual} separate their roots by
\(c\rho a^j\). An interval smaller than this meets at most one
first-coordinate cylinder of that depth, of probability \(2^{-j}\).
Choose \(j\) according to its length and integrate over the
second address.

For each paired prefix \(w\) of depth \(j\), let \(I_w\) be an
interval of length \(C\rho b^j\) containing its full root image,
centered at one of its roots. Equation \eqref{eq:root-small-ball}
implies
\begin{equation}
\sum_{|w|=j}4^{-j}1_{I_w}(u)\leq Cb^{jd}.
\label{eq:root-overlap}
\end{equation}
Choose an auxiliary freezing point \(\lambda_w\in I_w\) such that
\[
M(\lambda_w)^2\leq\frac2{|I_w|}\int_{I_w}M(\lambda)^2\,d\lambda.
\]
Summing with \eqref{eq:root-overlap}, and using that \(\lambda_0\)
is a finite Lebesgue point, gives
\begin{equation}
\sum_{|w|=j}4^{-j}M(\lambda_w)^2
\leq C_{\lambda_0}b^{-j(1-d)}
\label{eq:average-freezing}
\end{equation}
uniformly as \(\rho\to0\).

Set the two remaining tails in the residual to zero and denote
the resulting function by \(G_w(\lambda)\).
Let \(A_w,B_w\) be its derivatives with respect to the two
remaining tail values, including their factors \(\lambda^j\).
The complete affine kernel at \(\lambda_w\) is the law of
\begin{equation}
\lambda_w-\frac{G_w(\lambda_w)+A_w(\lambda_w)X+
 B_w(\lambda_w)Y}{G'_w(\lambda_w)},
\label{eq:newton-kernel}
\end{equation}
with independent full \(\mu_{\lambda_w}\) tails.
The Newton constant is necessary because \(\lambda_w\) need
not itself be a root. We have
\(-G'_w\asymp1\), \(A_w,B_w\asymp\rho\lambda_w^j\).
Taylor expansion under the same-address coupling bounds the
distance from the actual root by
\begin{equation}
C\rho^2(L+j+1)^C b^{2j}.
\label{eq:newton-error}
\end{equation}
This follows by expanding in both tail values and in
\(\lambda-\lambda_w=O(\rho b^j)\). Second derivatives in tail
values are \(O(\rho^2\lambda^{2j})\); mixed derivatives are at
most \(C(L+j+1)^C\rho\lambda^j\); the second parameter
derivative is bounded.

Split a parent kernel exactly into four affine child kernels,
then replace them by the kernels at their chosen freezing points.
Coupling both through the actual root and dividing by their
comparable widths bounds the normalized displacement by
\[
C\rho(L+j+1)^C(b^2/a)^j.
\]
Normalized supports are uniformly bounded, and their Sobolev
norms are bounded by
\(C(M(\lambda_w)+M(\lambda_{wi}))\), by
\eqref{eq:common-sobolev-bound}.
For bounded-support densities of \(H^\sigma\) norm at most \(A\),
smoothing at scale \(t\) gives
\[
\|h-\widetilde h\|_1
\leq C A^{1/(1+\sigma)}
W_1(h,\widetilde h)^{\sigma/(1+\sigma)}.
\]
Indeed the two terms are \(CAt^\sigma\) and \(CW_1/t\);
optimization proves the inequality.

Let \(P_j\) be the mixture of all depth-\(j\) kernels with
weights \(4^{-j}\). Weighted Hölder and
\eqref{eq:average-freezing} give
\begin{equation}
\|P_{j+1}-P_j\|_1
\leq C_{\lambda_0}\rho^{\sigma/(1+\sigma)}(L+j+1)^C
\left(\frac{(b^2/a)^{2\sigma}}{b^{1-d}}\right)^{j/[2(1+\sigma)]}.
\label{eq:ae-summable}
\end{equation}
By \eqref{eq:ae-exponents} this is summable. The total sum tends
to zero as \(L\to\infty\), because \(\rho=\lambda_0^L\).
The coupling estimate identifies the weak limit as the true
root law \(\nu\), so convergence also holds in \(L^1\).
In particular, if \(h_0\) is the first kernel density,
\begin{equation}
\|\nu-h_0\,d\lambda\|_{\rm TV}=o(1),\qquad
\|h_0\|_2^2\leq C_{\lambda_0}\rho^{-1},\qquad
\operatorname{supp}h_0\subset
[\lambda_0-C'\rho,\lambda_0+C'\rho].
\label{eq:density-point-kernel}
\end{equation}

Let \(G\) be the set of parameters satisfying the asserted
dimension equality. This is Borel: on compact parameter
intervals the fibres have closed graph and are upper
semicontinuous, and their Hausdorff contents can be tested by
countably many finite rational-ball covers.
Lemma~\ref{lem:root-fibres} applies to
\eqref{eq:centered-cylinders}, so \(\nu(G)=1\).
Equations \eqref{eq:density-point-kernel} and Cauchy--Schwarz imply
\[
|G\cap[\lambda_0-C'\rho,\lambda_0+C'\rho]|
\geq c_{\lambda_0}\rho
\]
for every sufficiently large \(L\).
If the complement of \(G\) had positive measure, it would have
a density point that was also a finite Lebesgue point of \(M^2\).
The preceding bound along \(\rho=\lambda_0^L\to0\) contradicts
that density property.
A countable cover by intervals satisfying \eqref{eq:ae-exponents}
proves the theorem.
\end{proof}

\begin{example}\label{ex:ae}
Fix \(\alpha=\sqrt2\). The theorem applies to almost every
\(\lambda\in(7/16,9/20)\), since \(49/256>3/16\).
For those parameters,
\[
\frac{2\log2}{\log(16/7)}-1
<\dim_{\mathrm H}(E_{\sqrt2,\lambda})<
\frac{2\log2}{\log(20/9)}-1.
\]
This assertion does not identify an individual parameter outside
the exceptional null set.
\end{example}

The Hausdorff phase transition diagram is still incomplete.
Although we have located or bounded the first and last transitions,
the distribution of zero- and positive-dimensional parameters between
them remains largely unresolved. The unit-circle case shows that
positive dimension need not persist after its first appearance;
understanding this intermediate region for other exponents is a
natural direction for further study.

\begingroup
\renewcommand{\addcontentsline}[3]{}
\section*{Acknowledgements}
\endgroup
Kan Jiang was supported by the National Natural Science Foundation of China
(No.~12671112).
Nero Ziyu Li was supported by the Additional Funding Programme for Mathematical
Sciences, delivered by EPSRC (EP/V521917/1) and the Heilbronn Institute for
Mathematical Research.
He was also supported by the EPSRC Centre for Doctoral Training in Mathematics
of Random Systems: Analysis, Modelling and Simulation (EP/S023925/1).
Lifeng Xi was supported by the National Natural Science Foundation of China
(No.~12571096).

\bibliographystyle{amsplain}
\bibliography{references}

\appendix
\section{Exact verification and certificates}\label{app:certificates}

The computations below use exact integer or rational inequalities.
We retain their verification predicates and mathematical implications
here. The certificates, verification scripts, calculation
records and reproduction instructions are available in the
\href{https://github.com/Nero-17/LEAN-Formalisation-solutions-of-fermat-cantor-equation/tree/v1.2.0}
{versioned computational companion repository}.
These are Python certificate checks; they are not Lean formalizations.

\paragraph{Polynomial exclusions.}
For a polynomial \(p\), substitute
\(\lambda=a+(b-a)t\) and write
\(p(a+(b-a)t)=\sum_{i=0}^d c_i t^i\).
Its Bernstein coefficients on \([a,b]\) are
\[
B_k=\sum_{i=0}^k c_i\frac{\binom{k}{i}}{\binom d i},
\qquad 0\leq k\leq d.
\]
The identity
\[
p(a+(b-a)t)=\sum_{k=0}^d B_k\binom d k t^k(1-t)^{d-k}
\]
shows that positive rational \(B_k\) prove strict positivity on
the entire interval. We apply this test on
\([2679/10000,67/250]\).
The following rational lower bounds for the minimum Bernstein
coefficient suffice for the inequalities used in
Theorem~\ref{thm:circle-zero-intervals}:
\[
\begin{array}{l|r}
\text{inequality}&\text{strict lower bound}\\ \hline
\text{root derivative}&3\\
\text{earlier-scale exclusion}&7/10\\
\text{first projection gap}&7/100\\
\text{nested gap below other first children}&15/100\\
\text{nested gap above other second children}&19/100\\
\text{later-scale lower endpoint}&3/100\\
\text{later-scale upper endpoint}&2/100\\
\text{first-scale lower endpoint}&6/10000\\
\text{first-scale upper endpoint}&16/10000 .
\end{array}
\]
These are downward rational bounds, not rounded tests.

\paragraph{Digital graph verification.}
We specify the exact feasibility test for the digital graph.
Write a digit as \(a_ir+b_i\), with
\((a_i,b_i)\in\{(0,0),(0,1),(1,0),(1,1)\}\).
The exclusive intervals have endpoint numerators
\[
\ell_i=(0,0),(1,1),(1,3),(3,1),\qquad
u_i=(0,2),(2,0),(2,2),(3,3)
\]
over denominator six. After a prefix of length \(j\), write its
translation as \(A_jr+B_j\), starting at zero and updating by
\(A_{j+1}=3A_j+a_j,\ B_{j+1}=3B_j+b_j\).
The condition at position \(j\) pulls back to endpoints
\[
\frac{(6A_j+\ell_{i,1})r+6B_j+\ell_{i,2}}{6\,3^j},
\qquad
\frac{(6A_j+u_{i,1})r+6B_j+u_{i,2}}{6\,3^j}.
\]
Every lower endpoint must be at most every upper endpoint.
Intersect these linear inequalities with \([1,3]\), and retain
the word precisely when this closed feasible interval meets the
open slope band. The checker enumerates all 1024 words,
then checks every edge against the supplied vertex ranks.
Every same-rank edge must be the indicated successor on a
listed simple cycle. These tests prove the graph conclusion
used in Lemma~\ref{lem:good-slopes}; discovering the graph is
not part of the verification.

\paragraph{Ordinary binary returns.}
The underlying exact recursion can be described without a grid.
Let \(K\) be the union of the three closed switch intervals
shrunk by the specified margin. Starting with \(A_0=K\) and
\(D_0=\varnothing\), set
\begin{align*}
A_{j+1}&=A_j\cup\bigcup_c(A_j+c)/3,\\
D_{j+1}&=D_j\cup\bigcup_c(D_j+c)/3\\
&\quad{}\cup\bigcup_{c\ne d}
       \bigl((A_j+c)/3\cap(A_j+d)/3\bigr),
\end{align*}
where \(c,d\in\{0,1,r,r+1\}\).
The set \(A_j\) records one return of length at most \(j\);
\(D_j\) records two incomparable returns. Induction proves
both interpretations, since the last union chooses different
first digits. Exact unions and intersections of closed rational
intervals implement the recursion. The assertion
\(B_r\subset D_{L_\alpha}\), together with the nonlinear error bound
in the proof, is the required finite certificate.

\paragraph{Weighted returns.}
The weighted-return check uses grid denominator \(N=10^{25}\), depth \(33\), and
integer component weights \(10000,20000,10000\).
Each switch component is covered by closed intervals with
integer endpoints divided by \(N\). The complete partition
contains 232304 intervals. Its destination core is the same
component shrunk by \(1/100000\), then rounded inwards.

For a digit \(c\), a current integer interval \([a,b]\) is
enclosed after one actual nonlinear step by
\[
[\,3a-\lfloor Nc\rfloor-1,\ 
   3b-\lfloor Nc\rfloor+1\,].
\]
Indeed, the remainder is nonnegative and
\[
3N\operatorname{Lip}(e)
\leq\frac{12500000000000000000000000}
          {2747555304704286477874349469}<1.
\]
For \(c=r,r+1\), the sum of this error bound and the fractional
part of \(Nc\) is exactly
\[
\frac{1176692795523008485254935882}
     {8242665914112859433623048407}<1;
\]
for \(c=0,1\), the fractional part is zero.
Thus the combined translation rounding and signed nonlinear
correction are contained in the displayed enclosure.
The same bound holds after every descendant normalization.

For an interval \(I\) and remaining depth \(h\), compute the
maximum certified antichain weight recursively: stop with the
weight of a core containing \(I\), or continue to all four
outward-rounded children and multiply their sum by \(50/57\).
Take the larger of these two choices; stopping excludes all
descendants of that node, so the selected words form an
antichain. At depth zero only stopping is permitted.
Intervals outside a fixed containing interval of
\([0,(r+1)/2]\) contribute zero. Induction proves that this
recursion is a lower bound for the available weighted return
sum. Compute it as an integer numerator over \(57^h\), without
floating-point arithmetic. On every starting partition interval
in component \(i\), the result exceeds
\(\operatorname{weight}(i)+1\) in integer weights.
Since the largest component weight is 20000, this proves the
factor \(20001/20000\) used in the theorem.

The initial level belongs to the middle switch component and
outside its shrunken endpoints. These are exact rational checks
using the printed \(P,Q\).
The verifier also checks the ternary digits, the complete
partition coverage, the inward cores, and the error constant.

\paragraph{Logarithmic sources and the infinite exponent tail.}
For each logarithmic source we verify the ternary digits, the two
strict logarithmic corner signs, and the full rational
derivative-ratio interval.
We enclose logarithms by exact range reduction and
\[
\log z=2\sum_{j=0}^{J-1}
  \frac{((z-1)/(z+1))^{2j+1}}{2j+1}+R_J,\qquad z\geq1,
\]
where
\[
0\leq R_J\leq
\frac{2((z-1)/(z+1))^{2J+1}}
 {(2J+1)(1-((z-1)/(z+1))^2)} .
\]
For \(z<1\), use \(\log z=-\log(1/z)\).
Range reduction uses exact powers of two and the same enclosed
value of \(\log2\). All endpoint rounding is outward.

The fixed-scale and tail checks test each closed parameter cell,
each Cantor rectangle, and its assigned slope band.
They compute inner corner-image intervals
with a strict inset \(10^{-12}\), verify that their translates
cover the circle, and then verify that the closed parameter
cells cover \([1,3]\).
For the tail, the integral enclosure printed in the proof
controls every \(Z\geq1180980\); testing a finite list of
exponents is not used in its place.

\paragraph{Integer sources and the continuous exponent tail.}\label{app:integer-certificates}
The logarithmic source criterion in Theorem~\ref{thm:real-powers} also applies when the reduced denominator is one. The circle case is supplied by Theorem~\ref{thm:circle-quantitative}.

Here is the finite covering argument that supplies these strict
sources. For large exponents write \(\alpha=3^m t\), \(1\leq t<3\).
Choose a Cantor endpoint \(u_0=P/3^D\), an integer \(h\), and
\[
u=u_0+3^{-m}a,\qquad a\in3^hC,\qquad m\geq D+h .
\]
The depth condition makes every such \(u\) a Cantor point.
Modulo one, the logarithmic source condition asks whether
\[
\frac{\log v-3^mt\log(1+a/(3^mu_0))}{\log3}
\]
contains the target phase
\((\alpha\log u_0-\log \alpha)/\log3\).
It suffices that strict inner corner images of admissible
rectangles cover the entire circle. Adding a constant rotation,
we check instead the phase
\[
\frac{\log v+t3^m
        \log((3^mu_0+3^h/2)/(3^mu_0+a))}{\log3}.
\]
At fixed \(m\), its corner values are affine in \(t\), so their
bounds on a closed parameter interval occur at its endpoints.
For a uniform tail in \(m\), use the exact identity
\[
Z\log\frac{Z+3^h/2}{Z+a}
       =\int_a^{3^h/2}\frac{Z}{Z+s}\,ds,\qquad Z=3^mu_0.
\]
For fixed \(a\), this is monotone in \(Z\), with direction
determined by the orientation of the integral. Its value for
every \(Z\geq Z_{\min}\) is enclosed by its value at \(Z_{\min}\)
and its limit \(3^h/2-a\). Derivative ratios have analogous
rational endpoint bounds. Thus finitely many closed coefficient
cells certify the whole infinite tail.

The first two rows below concern integer exponents; the remaining rows cover continuous exponent ranges. The exact certificate decomposition is
\[
\begin{array}{c|l}
\text{exponents}&\text{verification}\\ \hline
2\leq \alpha\leq300&\text{strict sources and the nine returns below}\\
301\leq \alpha\leq531440&\text{individual strict logarithmic sources}\\
3^{12}\leq \alpha<3^{13}&8921\text{ closed parameter cells}\\
3^{13}\leq \alpha<3^{14}&6027\text{ closed parameter cells}\\
3^{14}\leq \alpha<3^{15}&5310\text{ closed parameter cells}\\
\alpha\geq3^{15}&5862\text{ coefficient cells, }293\text{ endpoint choices}.
\end{array}
\]
For the three fixed scales \(D=9,h=3\), and the two parameter
domains \([1,2]\), \([2,3]\) are covered without gaps.
For the tail, \(D=11,h=3,u_0\geq20/243\), and
\(Z_{\min}=1180980\); the symbolic condition \(15\geq11+3\)
holds. The individual-source range ends at \(3^{12}-1\).
Thus these certificates leave no uncovered integer.
Appendix~\ref{app:certificates} specifies their data, exact
logarithm bounds, and verification predicates.

Nine small exponents use direct returns instead of admissible
slopes:
\[
\begin{array}{c|rrrrrrrrr}
\alpha&9&10&23&25&28&31&32&78&82\\ \hline
L_\alpha&21&15&16&17&18&16&20&16&17 .
\end{array}
\]
For completeness, the common transfer is as follows. In a
certified cylinder \(x=(P+b)/3^p,\ y=(Q+a)/3^q\), \(q\geq p\),
multiply the Fermat residual by \(3^{\alpha q}\) and divide by
\(2\alpha3^{\alpha(q-p)}P^{\alpha-1}\). It becomes
\[
\frac{ra+b}{2}-T+H(a,b)=0,\quad
r=\frac{Q^{\alpha-1}}{3^{\alpha(q-p)}P^{\alpha-1}},\quad
T=\frac{3^{\alpha q}-Q^\alpha-3^{\alpha(q-p)}P^\alpha}
         {2\alpha3^{\alpha(q-p)}P^{\alpha-1}}.
\]
The remainder vanishes at zero and has sup-norm Lipschitz constant
at most
\[
\frac{(Q+1)^{\alpha-1}-Q^{\alpha-1}}{2\,3^{\alpha(q-p)}P^{\alpha-1}}
+\frac{(P+1)^{\alpha-1}-P^{\alpha-1}}{2P^{\alpha-1}}.
\]
If the certificate uses a smaller rational slope \(r_0\), add
\((r-r_0)/2\) to this bound. The exact checks give \(T\in B_{r_0}\),
two incomparable returns of length at most \(L_\alpha\) into the
\(1/100000\)-interior of \(B_{r_0}\), and the strict inequality
\(3^{L_\alpha}\operatorname{Lip}(H)<1/100000\), including this slope
adjustment. Descendant normalization preserves the same error
bound. Lemma~\ref{lem:strict-returns} therefore gives
\(\dim_{\mathrm H}(E_{\alpha,1/3})\geq\log2/(L_\alpha\log3)>0\).
Together with the strict sources, this proves the integer case used in Theorem~\ref{thm:real-powers}.

\paragraph{Certificates for all real exponents.}\label{app:real-certificates}
The data and exact checkers for Theorem~\ref{thm:real-powers}
are included in release v1.2.0 of the repository cited above.
The guide \nolinkurl{docs/real-exponents.md} identifies the accepted
inputs, verification commands and previously completed verification
records; \nolinkurl{docs/paper-index.md} connects the paper's results
to their computational files. All acceptance tests use integer or rational
arithmetic, including outward enclosures of logarithms and algebraic
roots; finite-depth growth estimates play no role in the proof.

The sixteen-source denominator cover uses the following rows.
Every \(P,Q\) has ternary digits in \(\{0,2\}\). Substituting each
row in the open-interval formula in the proof and sorting the resulting
rational endpoints verifies strict overlaps covering \([1,3]\).
All listed slope bands are admissible bands or their reciprocals.
\[
\begin{array}{rr|cc}
P&Q&\ell&u\\ \hline
180&222&89/73&171/137\\
224&180&137/171&73/89\\
180&218&89/73&171/137\\
182&218&89/73&171/137\\
216&168&137/171&73/89\\
186&218&89/73&171/137\\
162&242&89/57&171/89\\
182&224&89/57&171/89\\
186&188&89/57&171/89\\
224&186&89/57&171/89\\
240&164&89/57&171/89\\
242&162&89/57&171/89\\
504&216&89/73&171/137\\
512&216&89/73&171/137\\
222&182&137/57&219/89\\
540&222&89/73&171/137\\
\end{array}
\]

For the finite range \(1<a/b<2187\), \(2\leq b\leq109\),
the checker enumerates every coprime pair. It first tests the
denominator-dependent width cover, then checks an individual source
where necessary. Each individual record gives \((a,b,P,Q,J)\) and
the slope band. With \(p,q\) the ternary depths, set
\(d=(J+ap-bq)/b\). The corner signs are exactly
\[
3^Jb^bP^a<a^b(Q+1)^b,\qquad
3^Jb^b(P+1)^a>a^bQ^b.
\]
Negative powers of three are cleared before an integer comparison.
For the large collection, range-reduced logarithms are enclosed by
the positive series for \(2\operatorname{arctanh}z\), \(0\leq z\leq1/3\),
with explicit tail bounds and outward integer rounding. A separate
implementation uses seventy terms and 200-bit fixed-point arithmetic.
It re-enumerates the entire domain and verifies the partition
\[
8{,}033{,}550=7{,}738{,}063+294{,}016+1{,}464+7.
\]
The \(1{,}464\) additional sources are also checked by direct integer
powers. The seven remaining pairs are precisely those in the
direct-return table in the proof.

For the five layers \(m=7,\ldots,11\), write \(\alpha=3^mt\),
\(1\leq t\leq3\), and use the rotated phase displayed in
Appendix~\ref{app:integer-certificates}. At each fixed endpoint
\(u_0=P/3^D\), the admissible rectangles have strict derivative
bounds throughout a closed \(t\)-cell. Their corner-image intervals
are shrunk by \(10^{-12}\). For each assigned denominator \(b\),
multiply these intervals by \(b\) and reduce modulo one. Exact
interval unions cover the whole circle, which proves coverage of
every target phase modulo \(1/b\). Different denominators may use
different endpoints, but every assigned endpoint by itself covers
its denominator's entire phase circle. Intervals from distinct
endpoints are never combined without their rotations. The checker
reconstructs the corner and derivative bounds and verifies all
closed-cell joins from \(1\) to \(3\).

Six special exponents use the following positive source cylinders
and actual integer scalings:
\[
x=3^{-k}\frac{P+s}{3^p},\qquad
y=1-3^{-m}\frac{Q+1-t}{3^q},\qquad s,t\in C.
\]
\begin{center}
\small
\begin{tabular}{c|r|r|c}
\(\alpha\)&\(P\)&\(Q\)&\((p,q,k,m)\)\\ \hline
\(24/23\)&9006915410325491654&10818090212415614952&\((40,40,110,115)\)\\
\(19/18\)&9110013456467818106&10841815821954030882&\((40,40,104,110)\)\\
\(13/12\)&9120707925586079678&10817949568504999626&\((40,40,105,114)\)\\
\(8/7\)&9450592763918769000&10918429022892060476&\((40,40,68,78)\)\\
\(11/4\)&48241792345204752&37518532384253874&\((35,35,103,284)\)\\
\(53/2\)&105672039117920552328&8521470282198373850&\((42,40,101,2680)\)\\
\end{tabular}
\end{center}

In each row \(d=m-\alpha k\in b^{-1}\mathbb Z\).
For the limiting equation \(u^\alpha-\alpha3^{-d}v=0\), divide by
\(2\alpha3^{-d-q}\). Its tangent ratio \(r_*\) at the lower corner
and its level satisfy
\[
r_*^b=3^{bd+b(q-p)}(P/3^p)^{a-b},\qquad
T=\frac{Q+1-(b/a)r_*P}{2}.
\]
The certificate encloses these quantities by rational root brackets,
selects a rational reference slope, and bounds the full variation
of both derivatives. For \(1<\alpha<2\),
\((1-z)^{\alpha-2}\leq3/2\) on the actual cylinders; for \(\alpha\geq2\)
it is at most one. Taylor's theorem therefore bounds the additional
normalized constant and \(t\)-derivative errors, respectively, by
\[
\frac{3(\alpha-1)}8\,3^{q-m},\quad
\frac{3(\alpha-1)}4\,3^{-m}
\quad(1<\alpha<2),
\]
and by the same expressions with \(3/2\) replaced by \(1\)
when \(\alpha\geq2\). The checks include the shifted actual level,
the reference-slope error and the strict derivative margin.

The seventh source, \(\alpha=8/3\), is directly on the actual curve
and is displayed after the theorem. Rational brackets for the
cube roots of the fifth powers of its two coordinate endpoints
enclose both first derivatives; multiplication by the endpoints
also encloses the residual. The two source checks use independent
root-enclosure implementations.

For all seven sources, the binary-return recursion given above
is applied to the \(\eta\)-interior of \(B_r\), using the specified
rational reference slope. It verifies the full inclusion
\(B_r\subset D_M\), not a sample of levels. Together with the
strict nonlinear error bound in the theorem, this proves that
the two incomparable returns can be repeated at every descendant.
For \(8/3\), a second interval implementation reproduces all
seventeen generations of the recursion, ending with \(289{,}323\)
one-return intervals and \(1{,}787{,}353\) two-return intervals.

\paragraph{Certificates for the uniform last-transition bound.}
\label{app:uniform-certificates}
The data and independent verification programs for
Theorem~\ref{thm:last-real} are supplied in the
\href{https://github.com/Nero-17/LEAN-Formalisation-solutions-of-fermat-cantor-equation/tree/v1.2.0/supplements/universal-last-bound}{versioned computational supplement} (release v1.2.0).
The package includes a manifest of data hashes and the accepted
verification reports. With Python~3.11 or later and its standard library, run from the repository root
\begin{verbatim}
python reproduce.py uniform-bound
\end{verbatim}
to recompute every required finite inequality and coverage check.
The command \texttt{python reproduce.py uniform-bound-audit} checks the saved reports,
data hashes, and dependency chain; it does not repeat the arithmetic.
The mathematical reductions are given here and in the theorem's proof.
These checks are a computer-assisted proof, not a Lean formalization.

Cylinder endpoints are reconstructed from finite binary words as
polynomials in \(\lambda\), with derivative bounds obtained by
differentiating those polynomials. The verifier uses outward integer
intervals with denominator \(10^{60}\). Logarithms use binary range
reduction and \(110\) terms of
\[
\log x=2\sum_{j=0}^{\infty}
\frac{1}{2j+1}\left(\frac{x-1}{x+1}\right)^{2j+1},
\]
with a geometric bound for the remainder. Exponentials use \(80\)
positive Taylor terms with a rigorous tail bound and repeated
squaring. Rational powers are enclosed by integer root comparisons
or by these logarithm and exponential bounds.
For contraction intervals, a midpoint value plus interval derivative
times radius bounds each residual. Intersecting this bound with the
natural polynomial interval enclosure remains rigorous and reduces
subdivision for cylinder endpoints near one.
For the five continuous covers, the filenames are
\begin{verbatim}
continuous_small_n_cover.jsonl
continuous_medium_n_cover.jsonl
continuous_high_n_cover.jsonl
continuous_bridge_n_cover.jsonl
continuous_upper_n_cover.jsonl
\end{verbatim}
There are \(89240\) records and \(107278\) verified subrectangles.
The independent exponent sweeps have respectively
\(48,493,1842,2017,2553\) strips.

\paragraph{The interval \(1<\alpha\leq6/5\).}\label{app:uniform-near-one}
For \(1<\alpha\leq107/100\), use the outer cylinders
\(u=1-\lambda+\lambda s\), \(v=1-\lambda t\), and choose
\begin{equation}\label{eq:near-one-window}
\max\{1,1/\lambda-1-\lambda\}<z<\frac1{\alpha(1-\lambda)}.
\end{equation}
The model \(u^\alpha=\alpha zv\) crosses its corners strictly, and its
ratio interval is \([z,z/(1-\lambda)^{\alpha-1}]\).
This lies in the first band, since
\(\alpha(1-\lambda)^{\alpha-1}>1\); indeed
\(\log \alpha>(\alpha-1)\log2\) for \(1<\alpha<2\).
For reduced \(\alpha=p/q\), the available values are
\(z=\lambda^{j/q}\), \(j\in\mathbb Z\), whose multiplicative
spacing is \(\lambda^{-1/q}\), and \(1/q\leq \alpha-1\).
The window contains a grid point if
\[
\alpha(1-\lambda)\max\{1,1/\lambda-1-\lambda\}<\lambda^{\alpha-1}.
\]
The left factor in \(\lambda\) decreases, and its value at
\((\sqrt3-1)/2\) is \(\sqrt3/2\). The worst exponent is
\(107/100\). The exact comparisons
\[
\frac{107\sqrt3}{200}<\frac{1391}{1500}
 <(9/25)^{7/100}
 <\left(\frac{\sqrt3-1}{2}\right)^{7/100}
\]
follow from \(\sqrt3<26/15\) and
\((9/25)^7>(1391/1500)^{100}\). Irrational exponents have
dense coefficient phases. For rational exponents, solving
\(qm-pk=j\) and increasing both scales gives the exact coefficient;
the endpoint transfer in the proof applies.

For \(107/100<\alpha\leq11/10\) and \(q\geq21\), the same window
works because
\[
\frac{11\sqrt3}{20}
 <\left(\frac{\sqrt3-1}{2}\right)^{1/21}
 \leq\lambda^{1/q}.
\]
For an exact check use \(\sqrt3<97/56\) and
\((1067/1120)^{21}<183/500<(\sqrt3-1)/2\).
The reduced fractions with \(q\leq20\) in this interval are
\(11/10,12/11,13/12,14/13,15/14\).
For \(12/11,13/12,15/14\), respectively, choose
\(z=\lambda^{-4/11},\lambda^{-4/12},\lambda^{-5/14}\)
in \eqref{eq:near-one-window}. Checking its two inequalities at
\((\sqrt3-1)/2\) suffices: its upper endpoint increases, while
the chosen \(z\) decreases; where the lower endpoint exceeds one,
its logarithmic rate of decrease is greater than one and hence
greater than the rate of these powers.

For \(11/10\) and \(14/13\), use cylinders \(10,11\), with hulls
\([1-\lambda,1-\lambda+\lambda^2]\) and
\([1-\lambda^2,1]\), and coefficients \(\alpha\lambda^t\) with
\(t=2/5,5/13\), respectively. After reflection and exchange of
coordinates, the ratio interval is
\begin{equation}\label{eq:near-one-second}
\left[\frac{(1-\lambda)^{\alpha-1}}{\lambda^t},
       \frac{(1-\lambda+\lambda^2)^{\alpha-1}}{\lambda^t}\right].
\end{equation}
Here \(t>\alpha-1\) and \(\alpha-1+t<1\). Both endpoints decrease in
\(\lambda\), the lower remains above one at \(1/2\), and the
upper band endpoint increases. Where \(1/\lambda-1-\lambda>1\),
the absolute logarithmic derivative of the lower ratio with respect
to \(\log\lambda\) is at most \(\alpha-1+t<1\), whereas that of
\(1/\lambda-1-\lambda\) is greater than one. Thus the band
comparisons reduce to \(\lambda=(\sqrt3-1)/2\).
The lower corner also reduces to this endpoint; for the upper
corner it suffices to check
\[
(3/4)^\alpha>\alpha(1/2)^t
\left(1-\left(\frac{\sqrt3-1}{2}\right)^2\right).
\]
The program \texttt{parent\_near\_one\_audit.py} checks all these
rational-power inequalities exactly.

For \(11/10\leq \alpha\leq6/5\), the same two cylinders work
uniformly for \(21/50\leq t\leq47/100\).
The lower ratio is worst at \((\alpha,t)=(6/5,21/50)\), the upper
at \((11/10,47/100)\), and the lower corner at
\((11/10,47/100)\). The preceding monotonicity again reduces
these comparisons to \((\sqrt3-1)/2\).
For the upper corner, subdivide the contraction interval at
\(19/50,2/5,21/50,11/25,23/50,12/25,1/2\).
On each successive interval \([a,b]\), check
\[
(1-b+b^2)^{6/5}>\frac65 b^{21/50}(1-a^2).
\]
For the first \(a\), use a rational lower enclosure of
\((\sqrt3-1)/2\). These seven strict inequalities are checked
by the same program. Since \(47/100-21/50=1/20>1/21\),
every coefficient grid with \(q\geq21\) meets the open window.
The exact small-denominator enumeration and chosen powers are
\[
\begin{array}{c|rrrrrrrrr}
\alpha&21/19&10/9&17/15&8/7&23/20&15/13&22/19&13/11&19/16\\
t&8/19&4/9&7/15&3/7&9/20&6/13&8/19&5/11&7/16
\end{array}
\]
inside this window. Three further pairs,
\((19/17,8/17),(9/8,3/8),(20/17,8/17)\), satisfy the same
corner and band comparisons individually, including the seven
upper-corner comparisons. The only remaining fractions are
\(7/6\) and \(6/5\). Their files
\texttt{n7\_6\_analytic\_cover.jsonl} and
\texttt{n6\_5\_analytic\_cover.jsonl} contain \(67\) and \(74\)
actual-source records. The independent checker verifies them on
\(121\) and \(137\) subintervals, respectively, and checks that
each union starts strictly below \((\sqrt3-1)/2\) and ends at
\(1/2\). This completes every real exponent in the stated range.

\paragraph{The unbounded exponent interval.}\label{app:uniform-tail}
Write \(\alpha=\lambda^{-m}t\), \(1\leq t<1/\lambda\).
Take a finite cylinder endpoint \(u_0>0\), an integer
\(h\in\{0,1,2\}\), and insert the genuine cylinder
\[
u=u_0+\lambda^{m-h}s,\qquad s\in C_\lambda.
\]
Its admissibility requires that the endpoint have a word of depth
at most \(m-h\); appending zero digits then gives exactly this
cylinder. On writing \(a=\lambda^{-h}s\), its logarithmic phase,
after subtracting \(\alpha\log u_0\), is
\begin{equation}\label{eq:uniform-tail-phase}
\frac{t}{u_0}(\lambda^{-m}u_0)
\log\left(1+\frac{a}{\lambda^{-m}u_0}\right)-\log v.
\end{equation}
The first derivative with respect to \(a\) is
\((t/u_0)/(1+a/(\lambda^{-m}u_0))\).
Every recorded pair of subcylinders has its whole derivative-ratio
interval inside one band of \eqref{eq:two-bands}, allowing reflection
and exchange of coordinates.

To control every finite scale above a lower bound, use
\[
Z\log\frac{Z+A}{Z+B}=\int_B^A\frac{Z}{Z+s}\,ds,
\qquad A,B\geq0.
\]
This is increasing in \(Z\) if \(A\geq B\), decreasing otherwise.
Its range for \(Z\geq Z_0\) is therefore bounded by its value
at \(Z_0\) and its limit \(A-B\). The verifier checks both in
each phase-overlap comparison. It bounds the derivative loss by
\(1+\lambda^{-h}/(\lambda^{-m}u_0)\).

Here is the phase-cover test. The recorded sources and integer
shifts give true nonempty open phase intervals \((L_i,U_i)\),
with period \(-\log\lambda\). The checker verifies strict
successive overlaps and the closing inequality
\[
U_i>L_{i+1},\qquad
U_{\mathrm{last}}>L_1-\log\lambda.
\]
These inequalities cover the entire phase circle, without requiring
the lower endpoints to be ordered. Indeed, if a phase were omitted,
each interval would lie between two consecutive translates of it.
The successive inequalities force the corresponding translate
indices to be nonincreasing; the closing inequality forces the
last index to exceed the first, a contradiction.
The cover therefore hits the phase \(\log \alpha-\alpha\log u_0\)
modulo \(-\log\lambda\). Equation~\eqref{eq:uniform-tail-phase}
then yields a strict source for \(u^\alpha=\alpha\lambda^jv\) with an
integer \(j\), to which the logarithmic endpoint transfer applies.

Each cycle is certified on an interval of coefficients \(t/u_0\).
If that interval is \([b_-,b_+]\) and the endpoint enclosure is
\([u_-,u_+]\), then \([u_+b_-,u_-b_+]\) is a common permitted
interval of reduced exponents \(t\). The verifier rounds these
products inward and checks their union covers \([1,1/\lambda]\)
throughout each contraction slab. It also reconstructs all endpoints,
checks their depths, and verifies the slab union.

For \(\lambda^{-m}\geq1000\), use
\texttt{adversarial\_phase\_small\_M1000\_D7.jsonl}.
Its \(36\) slabs contain \(33755\) sources.
Since \(((\sqrt3-1)/2)^{-6}<1000\), we have \(m\geq7\).
The endpoint-depth checks require at most \(7,6,5\) digits for
\(h=0,1,2\), respectively, so insertion is legal for every scale.
The independent replay is \texttt{parent\_uniform\_phase\_audit.py}.

If \(\lambda^{-m}<1000\) and \(\alpha\geq500\), then
\(m\in\{6,7,8,9\}\). Indeed \(m\leq5\) would give
\(\alpha<\lambda^{-6}\leq((\sqrt3-1)/2)^{-6}<500\), whereas
\(m\geq10\) would give \(\lambda^{-m}\geq2^{10}>1000\).
For each remaining \(m\), it suffices to cover
\[
\left(1000^{-1/m},500^{-1/(m+1)}\right)
\cap\left[\frac{\sqrt3-1}{2},\frac12\right].
\]
On a rational contraction slab \([l,u]\), use the enlarged ranges
\[
\lambda^{-m}\geq\lfloor u^{-m}\rfloor,
\qquad \max\{1,500l^m\}\leq t\leq1/l.
\]
The file \texttt{adversarial\_finite\_shells\_N500.jsonl}
contains \(75\) such slabs and \(22859\) sources, each with
endpoint depth at most \(m-h\). The program
\texttt{parent\_finite\_shell\_audit.py} checks the same phase,
derivative, and insertion conditions, and the exact union of slabs
for each \(m\). Initial endpoints are rounded down.
Together with the full-scale certificate this covers all
\(\alpha\geq500\), without a rationality assumption.

\end{document}